\documentclass[a4paper,oneside,english,reqno]{amsart}

\usepackage[utf8]{inputenc}
\usepackage[T1]{fontenc}

\DeclareFontFamily{OMX}{mlmex}{}
\DeclareFontShape{OMX}{mlmex}{m}{n}{%
   <->mlmex10%
   }{}%
\usepackage{mlmodern}

\usepackage[hscale=0.6, vscale=0.7]{geometry}
\usepackage{babel}
\usepackage[numbers]{natbib}
\usepackage{hyperref}
\hypersetup{%
  colorlinks=true,%
  citecolor=[RGB]{120,29,126},%
  pdfauthor={Jean-François Burnol},%
  pdfsubject={Euler's constant},%
  pdfstartview=FitH,%
  pdfpagemode=UseNone,%
}

\usepackage{amsmath,amsthm,amssymb}
\usepackage{mathtools}
\DeclareMathOperator{\Log}{Log}
\DeclareMathOperator{\Arg}{Arg}
\DeclarePairedDelimiterX\Iffint[2]{\lbrack\!\lbrack}{\rbrack\!\rbrack}{#1,#2}
\DeclarePairedDelimiterX\Ioo[2]{\lparen}{\rparen}{#1,#2}
\DeclarePairedDelimiterX\Iof[2]{\lparen}{\rbrack}{#1,#2}
\DeclarePairedDelimiterX\Ifo[2]{\lbrack}{\rparen}{#1,#2}
\DeclarePairedDelimiterX\Iff[2]{\lbrack}{\rbrack}{#1,#2}

\newcommand\NN{\mathbb{N}}
\newcommand\ZZ{\mathbb{Z}}

\newcommand\CC{\mathbb{C}}
\newcommand\cA{\mathcal{A}}
\newcommand\cB{\mathcal{B}}
\newcommand\cC{\mathcal{C}}

\newcommand\e{\mathsf{e}}
\newcommand\dt{\mathrm{d}t}

\newcommand\dx{\mathrm{d}x}

\newcommand\dz{\mathrm{d}z}

\newcommand\dds{\frac{\mathrm{d}}{\mathrm{d}s}}

\newcommand\ddm{\frac{\mathrm{d}}{\mathrm{d}m}}
\newcommand\ddz{\frac{\mathrm{d}}{\mathrm{d}z}}

\DeclareMathOperator\Res{Res}
\theoremstyle{plain}
\newtheorem{theo}{Theorem}
\newtheorem{prop}{Proposition}
\newtheorem{lem}{Lemma}

\theoremstyle{definition}

\newtheorem{rema}{Remark}

\allowdisplaybreaks

\title[Some series for gamma]{%
  Some geometric series for Euler's constant}

\author[J.-F. Burnol]{Jean-François Burnol}

\newcommand\arxivurl[1]{\href{https://arxiv.org/abs/#1}{\textsf{arXiv:#1}}}

\date{June 14, 2026}

\subjclass[2020]{11Y60, 11B83, 33B15, 41A60 (Primary) 05A16, 11B68, 11M41, 30E15,
  60C05 (Secondary)}

\keywords{Euler's constant, Bernoulli numbers, asymptotics, selecting a loser, Mellin transform method in the analysis of algorithms,
  ratios of Gamma functions.}

\usepackage[np,autolanguage]{numprint}
\AtBeginDocument{\npthousandsep{\allowbreak\,}}

\begin{document}

\begin{abstract}
  We provide representations of Euler's constant $\gamma=0.577...$ as series
  which converge geometrically fast (but use a certain sequence whose
  computation induces a quadratic cost).  The asymptotic oscillations of these
  coefficients are determined to all orders.  A result of independent interest,
  about sufficient conditions for the validity, in the case of unbounded
  parameters, for the Tricomi-Erdélyi asymptotic expansion of the ratio of two
  Gamma functions, is established for that purpose.
\end{abstract}

\maketitle

\section{Main results}

Let $e_0=0$, and define positive rational numbers $e_m$ for $m\geq1$ by this
recurrence:
\begin{equation}
  \label{eq:emrec}
  e_{m} = \frac{2^{m+1} + \sum_{j=1}^m \binom{m+1}{j} e_{m-j}}{2^{m+1} -2}.
\end{equation}
This gives the sequence
$0$,
$2$,
$\frac{7}{3}$,
$\frac{8}{3}$,
$\frac{133}{45}$,
$\frac{16}{5}$,
\dots{} ,
$e_{10} = \frac{163287}{40579}$,
\dots{}. See Table~\ref{table:3} for more values.
\begin{table}[htbp]
\def\arraystretch{1.5}
\caption{The coefficients $e_m$, $1\leq m\leq 20$.\\ See also
  \url{https://oeis.org/A372422}.}
\label{table:3}
  \begin{tabular}[t]{cc}
    $m$& $e_m$\\\hline\noalign{\vskip2\jot}
    $1$ &$2$
    \\[3\jot]
    $2$ &$\displaystyle\frac{7}{3}$
    \\[3\jot]
    $3$ &$\displaystyle\frac{8}{3}$
    \\[3\jot]
    $4$ &$\displaystyle\frac{133}{45}$
    \\[3\jot]
    $5$ &$\displaystyle\frac{16}{5}$
    \\[3\jot]
    $6$ &$\displaystyle\frac{3221}{945}$
    \\[3\jot]
    $7$ &$\displaystyle\frac{3392}{945}$
    \\[3\jot]
    $8$ &$\displaystyle\frac{100391}{26775}$
    \\[3\jot]
    $9$ &$\displaystyle\frac{20848}{5355}$
    \\[3\jot]
    $10$ &$\displaystyle\frac{163287}{40579}$
    \\[3\jot]
\hline
  \end{tabular}
\qquad\qquad
  \begin{tabular}[t]{cc}
    $m$& $e_m$\\\hline\noalign{\vskip2\jot}
    $11$ &$\displaystyle\frac{7567072}{1826055}$
    \\[3\jot]
    $12$ &$\displaystyle\frac{10605587147}{2492565075}$
    \\[3\jot]
    $13$ &$\displaystyle\frac{1551804656}{356080725}$
    \\[3\jot]
    $14$ &$\displaystyle\frac{1732332761353}{388911367845}$
    \\[3\jot]
    $15$ &$\displaystyle\frac{252492267136}{55558766835}$
    \\[3\jot]
    $16$ &$\displaystyle\frac{2313623814645529}{499751107680825}$
    \\[3\jot]
    $17$ &$\displaystyle\frac{261522788700176}{55527900853425}$
    \\[3\jot]
    $18$ &$\displaystyle\frac{69661896931499841923}{14556250513419389775}$
    \\[3\jot]
    $19$ &$\displaystyle\frac{2828470111061381408}{582250020536775591}$
    \\[3\jot]
    $20$ &$\displaystyle\frac{23101294621895391907711}{4689192129680103420375}$
    \\[3\jot]
\hline
  \end{tabular}
\end{table}
They can be expressed, as we will see, in terms of Bernoulli numbers (using
the convention $B_1=-\frac12$):
\begin{equation}
  \label{eq:ember}
  e_m = -\sum_{k=1}^m \binom{m+1}{k}B_k\frac{2^k}{2^k-1} 
= 1 - \sum_{k=1}^m \binom{m+1}{k}B_k\frac{1}{2^k-1}\,.
\end{equation}
(the second formulation supposes $m\geq1$).

Let $\gamma$ be as usual the Euler-Mascheroni constant.  Here is the main Theorem:
\begin{theo}\label{thm:main}
  Let $\ell\geq2$. Let $(e_m)$ be the sequence defined by Equation~\eqref{eq:emrec}. There holds
  \begin{equation}\label{eq:main}
    \gamma = \sum_{n=1}^{2^{\ell-1}-1}\frac1n - (\ell-1)\log 2  +
    \sum_{m=1}^\infty\frac{(-1)^{m-1}e_m}{m+1}
                    \sum_{2^{\ell-1}\leq n<2^{\ell}} \frac1{n^{m+1}}.
  \end{equation}
In particular for $\ell=2$:
  \begin{gather*}
    \gamma = 1 - \log 2 +
    \sum_{m=1}^\infty\frac{(-1)^{m-1}e_m}{m+1}\Bigl(2^{-m-1}+3^{-m-1}\Bigr),
\\
\shortintertext{and for $\ell=3$:}
    \gamma = 1 + \frac12 + \frac13 -2\log 2 + 
    \sum_{m=1}^\infty\frac{(-1)^{m-1}e_m}{m+1}
                   \Bigl(4^{-m-1}+5^{-m-1}+6^{-m-1}+ 7^{-m-1}\Bigr).
  \end{gather*}
\end{theo}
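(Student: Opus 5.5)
The plan is to start from the classical series $\gamma=\sum_{n\geq1}h(n)$ with $h(x)=\frac1x-\log\bigl(1+\frac1x\bigr)$. Put $N=2^{\ell-1}$. The terms $n<N$ telescope to $\sum_{n=1}^{N-1}\frac1n-(\ell-1)\log2$, which is the explicit part of \eqref{eq:main}. It remains to show
\[
\sum_{n\geq N} h(n)=\sum_{N\leq n<2N}\psi(n),\qquad \psi(x):=\sum_{m\geq1}\frac{(-1)^{m-1}e_m}{m+1}x^{-m-1}.
\]
The integers $n\geq N$ form the disjoint union of the complete binary trees rooted at $N\leq n<2N$, where the children of $n$ are $2n$ and $2n+1$. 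So it suffices to find a function $\psi$ with
\[
\psi(x)-\psi(2x)-\psi(2x+1)=h(x)\qquad(x\geq2),
\]
and with $\psi$ small enough at infinity to make the unfolding converge.

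\textbf{Step 1 (formal identification).} Write $\psi(x)=\sum_{m\geq1}c_m x^{-m-1}$. Expand $(2x+1)^{-m-1}=(2x)^{-m-1}\sum_{i\geq0}\binom{-m-1}{i}(2x)^{-i}$, and use $h(x)=\sum_{j\geq2}\frac{(-1)^j}{j}x^{-j}$. Equating the coefficients of $x^{-m-1}$ gives a triangular linear recurrence for the $c_m$. I will check that the substitution $c_m=\frac{(-1)^{m-1}e_m}{m+1}$ turns it into exactly \eqref{eq:emrec}. The identity $\binom{-k-1}{i}\frac{1}{k+1}=\frac{(-1)^i}{m+1}\binom{m+1}{i}$ with $k+i=m$ should produce the $\binom{m+1}{j}$, and the factor $2^{m+1}-2$ comes from $1-2\cdot2^{-m-1}$. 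As a sanity check, the coefficient of $x^{-2}$ gives $c_1/2=1/2$, so $e_1=2$.

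\textbf{Step 2 (convergence).} I need a growth bound on $e_m$, say $0<e_m\leq m+1$ or anything subexponential. I would get it by induction from \eqref{eq:emrec}, using $\sum_{j=1}^m\binom{m+1}{j}=2^{m+1}-2$ so that the recurrence is essentially an averaging, or else from \eqref{eq:ember}. With such a bound, $\psi$ converges absolutely for $|x|>1$, and $|\psi(x)|=O(x^{-2})$ for $x\geq2$. The double series obtained by expanding $(2x+1)^{-m-1}$ in powers of $1/(2x)$ converges absolutely for $x\geq2$. Hence the rearrangement in Step 1 is legitimate, and the functional equation holds as a genuine identity for real $x\geq2$ (also for $x=N\geq2$, since $\ell\geq2$).

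\textbf{Step 3 (unfolding).} Iterate the functional equation $L$ times over the trees rooted at $N\leq n<2N$. This gives
\[
\sum_{N\leq n<2N}\psi(n)=\sum_{n=N}^{2^{L}N-1}h(n)+\sum_{2^LN\leq n<2^{L+1}N}\psi(n).
\]
The remainder contains $2^LN$ terms, each of size $O\bigl((2^LN)^{-2}\bigr)$, so it tends to $0$. Letting $L\to\infty$ gives the claim.

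I expect the main obstacle to be the bookkeeping in Step 1, that is, matching the sign conventions and binomial factors exactly with \eqref{eq:emrec}. Step 2 is the other delicate point: the bound on $e_m$ must be strong enough to justify the double-series rearrangement at $x=2$.
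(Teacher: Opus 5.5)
Your proposal is correct, and it takes a genuinely different route from the paper.

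\textbf{Checking your steps.} The bookkeeping in Step~1 does close up. Put $c_k=(-1)^{k-1}e_k/(k+1)$ and use $\binom{-k-1}{m-k}=(-1)^{m-k}\binom{m}{k}$. The coefficient of $x^{-m-1}$ then gives $(2^{m+1}-2)e_m=2^{m+1}+\sum_{k=1}^{m-1}\binom{m+1}{k+1}e_k$. This is \eqref{eq:emrec} after setting $j=m-k$ and using $e_0=0$. For Step~2, since $\sum_{j=1}^m\binom{m+1}{j}=2^{m+1}-2$, induction gives $0\leq e_m\leq \max_{k<m}e_k+1+(2^m-1)^{-1}$. Hence $e_m=O(m)$ and $c_m=O(1)$. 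The double series for $\psi(2x+1)$ is then dominated by $\sum_k|c_k|(2x-1)^{-k-1}<\infty$ for $x>1$, so the rearrangement and Step~3 go through.

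\textbf{The paper's route.} The paper obtains the theorem as a corollary of an identity from \cite{burnolzeta_umosc}. That identity expresses $\eta(s)$, for $\Re s>0$, as a series whose coefficients $c_m(s)$ obey \eqref{eq:cm}. The paper differentiates at $s=1$ and uses $\eta'(1)=\log(2)(\gamma-\frac12\log 2)$ and $c_m(1)=1/(m+1)$. It evaluates the resulting auxiliary sum to $-(\ell-\frac12)\log^2 2$. It then checks that $e_m:=-(m+1)c_m'(1)/\log 2$ satisfies \eqref{eq:emrec} by differentiating \eqref{eq:cm}.

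\textbf{What your route buys.} Your argument is self-contained and elementary. It uses only the recurrence, the classical series $\gamma=\sum_{n\geq1}\bigl(\frac1n-\log(1+\frac1n)\bigr)$, and the binary-tree partition of the integers $\geq 2^{\ell-1}$. It shows where $\ell\geq2$ enters: it keeps $x=2^{\ell-1}$ inside the region $x>1$ where $\psi$ converges absolutely. It also adapts directly to a base $b$ via $\psi(x)-\sum_{0\leq d<b}\psi(bx+d)=h(x)$, which produces the digit power sums the paper alludes to. Your functional equation is in fact the Laplace-side form of \eqref{eq:funceq}: writing $\psi(x)=\int_0^\infty \e^{-xt}E(-t)t^{-1}\,\dt$ for $x>1$, it becomes exactly that relation for $E$.

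\textbf{What the paper's route buys.} It embeds the formula in a family in $s$. More importantly for the rest of the paper, it identifies $e_m=-(m+1)c_m'(1)/\log 2$. That identification is what leads to the moment formula \eqref{eq:emformula}, the Bernoulli-number expression, and from there to the whole asymptotic analysis of $e_m$.
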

The \emph{level} $\ell$ is the number of binary digits used by the integers
whose inverse powers are added in Equation~\eqref{eq:main}.  In Table
\ref{table:1}, we illustrate the result of computations well within the reach
of a diligent human calculator: they use each $10$ terms of the series
for $\ell$ respectively equal to $2$, $3$, \dots, $7$, and $20$ terms for $\ell=2, 3, 4$.

\begin{table}[htp]
\def\arraystretch{1.2}
\caption{Table of approximations of $\gamma$ (decimal expansions in the middle column are truncated)}\label{table:1}
  \begin{tabular}{cll}
    $\ell$& Series from Thm.\@~\ref{thm:main} up to $e_m=e_{10}$ & last term\\\hline
    2&\np{0.57715}\dots &${}\approx\np{-0.0001807}$\\
    3&\np{0.577215646}\dots &${}\approx\np{-9.590e-8}$\\
    4&\np{0.5772156648954}\dots&${}\approx\np{-6.001e-11}$\\
    5&\np{0.5772156649015305}\dots&${}\approx\np{-4.481e-14}$\\
    6&\np{0.57721566490153285960}\dots&${}\approx\np{-3.782e-17}$\\
    7&\np{0.57721566490153286060605}\dots&${}\approx\np{-3.425e-20}$\\
\hline\hline
    $\gamma$&\np{0.577 215 664 901 532 860 606 512 090}\dots&\\
\hline\hline
    $\ell$& Series from Thm.\@~\ref{thm:main} up to $e_m=e_{20}$ & last term\\
    4&\np{0.5772156649015328606035}\dots&${}\approx\np{-2.785e-20}$\\
    3&\np{0.577215664901522}\dots&${}\approx\np{-5.384e-14}$\\
    2&\np{0.577215628}\dots &${}\approx\np{-1.119e-7}$\\
\hline
  \end{tabular}
\end{table}

A numerical implementation using \textsf{Python} and the \textsf{mpmath}
library is provided at \url{https://burnolmath.gitlab.io/dyadic-gamma/} (it
uses by default $\ell=8$).  The coefficients $e_m$ are computed using the
recurrence~\eqref{eq:emrec} (the larger the $m$, the less the required
precision), which induces a quadratic cost which is redhibitory for large
scale computations.  Equation~\eqref{eq:ember} is not immediately usable numerically for large $m$, as individual
terms are much larger than $e_m$.  Indeed it turns out that $e_m \sim
\log_2(m)$, more precisely the difference is bounded.  See Table~\ref{table:2}
and Figure~\ref{fig:1}, which displays intriguing oscillations of $e_m -
H_{m+1}/\log(2)$ ($H_n=\sum_{1\leq j\leq n}j^{-1}$) as a function of
$\log_2(m)$.

\begin{table}[htbp]
\def\arraystretch{1.2}
\caption{Comparison of $e_m$ with $(\log 2)^{-1}H_{m+1}$}
\label{table:2}
  \begin{tabular}[t]{cl}
    $m$& \multicolumn{1}{c}{$e_m - (\log 2)^{-1}H_{m+1}$}\\
\hline
    $1$ &\np{-0.164042561333}\dots
\\
    $2$ &\np{-0.311607574963}\dots
\\
    $3$ &\np{-0.338948001852}\dots
\\
    $4$ &\np{-0.338598121140}\dots
\\
    $5$ &\np{-0.334602850177}\dots
\\
    $6$ &\np{-0.332236533267}\dots
\\
    $7$ &\np{-0.331621032426}\dots
\\
    $8$ &\np{-0.331908031990}\dots
\\
    $9$ &\np{-0.332424034678}\dots
\\
    $10$ &\np{-0.332833436510}\dots
\\
\hline
  \end{tabular}
\qquad
  \begin{tabular}[t]{cl}
    $m$& \multicolumn{1}{c}{$e_m - (\log 2)^{-1}H_{m+1}$}\\
\hline
    $11$ &\np{-0.333041375341}\dots
\\
    $12$ &\np{-0.333074427538}\dots
\\
    $13$ &\np{-0.332999205051}\dots
\\
    $14$ &\np{-0.332880028058}\dots
\\
    $15$ &\np{-0.332762836214}\dots
\\
    $16$ &\np{-0.332673218057}\dots
\\
    $17$ &\np{-0.332620451671}\dots
\\
    $18$ &\np{-0.332603053818}\dots
\\
    $19$ &\np{-0.332613748636}\dots
\\
    $20$ &\np{-0.332643144549}\dots
\\
\hline
  \end{tabular}
\end{table}
\begin{figure}[htbp]
   \caption{$e_m - (\log 2)^{-1}H_{m+1}$ as function of $\log_2(m)$, $16\leq m \leq 500$. Notice the oscillations as function of $\log_2(m)$ and their small amplitudes.}
\label{fig:1}
  \centering
\includegraphics[width=\linewidth]
   {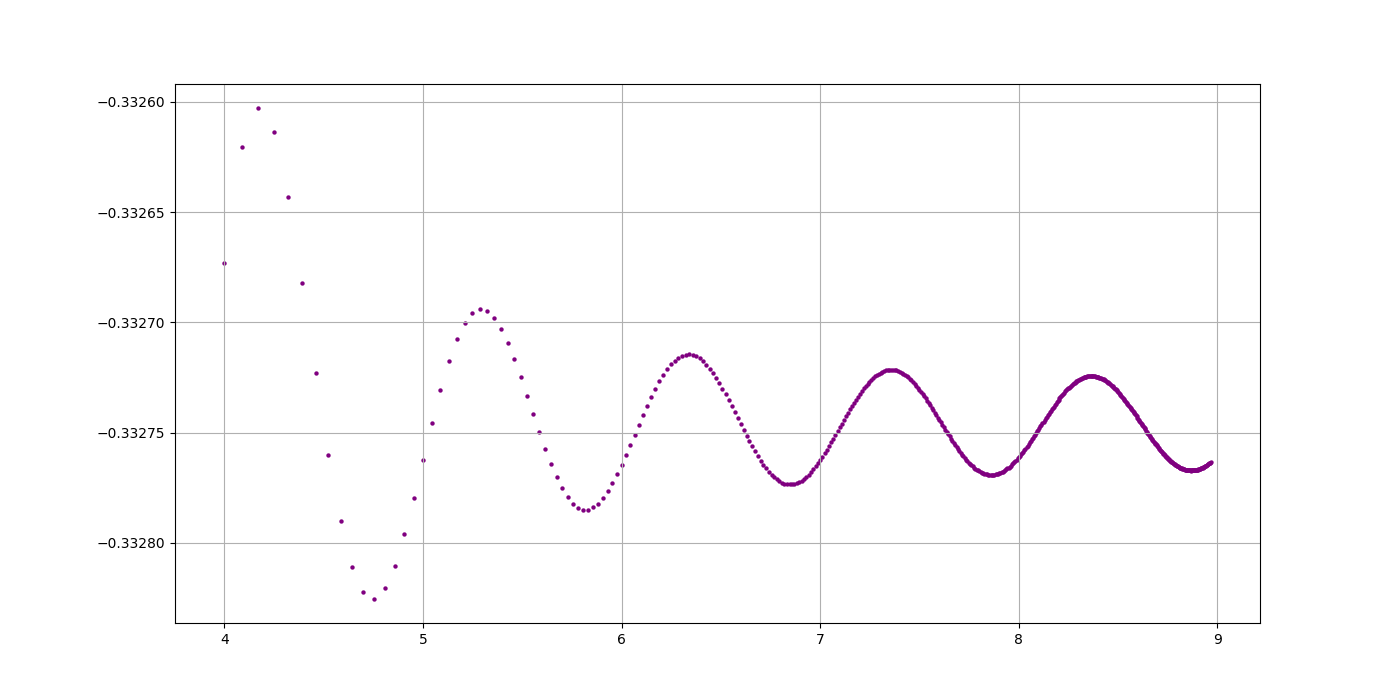}
\end{figure}


This is the time to reveal that the sequence $(e_m)$ (and its peculiar
asymptotic) has long been known in the field of theoretical computer science.
The numerators and denominators have dedicated OEIS pages
\href{https://oeis.org/A372422}{A372422} and
\href{https://oeis.org/A372423}{A372423}.  Paraphrasing their description
in a less precise way, $e_m$ is \emph{the expected depth of trees related to
  the process of recursively randomly eliminating people via coin tosses, from
  a group of initial cardinality $m+1$}.  The quantity $d_N$ of \cite[\S4, Thm.\@
14]{prodinger1993} is $e_{N-1}$.  It is one among a family of related
sequences arising in the theoretical analysis of search algorithms, to which
\textsc{Knuth} dedicated an entire chapter of his influential treatise
(\cite[\S6.3 ``Digital searching'']{knuth}).
We were led to the sequence $(e_m)$ via the
analysis of zeta series with missing digits (inclusive of the case with no
missing digits\dots) which we did in \cite{burnolzeta}, and
the occurrence in the context of \emph{digital search algorithms} is in
retrospect perhaps not so surprising.

In \cite{prodinger1993} \textsc{Prodinger} mentions an asymptotic $d_N
\texttt{"}{\sim}\texttt{"} \log_2(N) + \frac12 - \delta_2(\log_2(N))$ where
$\delta_2$ is a $1$-periodic function with zero average and small amplitude,
given as a Fourier series in $\log_2(N)$ whose coefficients are the values of
$(\log2)^{-1}\Gamma(s)\zeta(s)$ at the roots of $2^s-2=0$,  $s\neq1$.  Such type
of asymptotic occurs on multiple occasions in \textsc{Knuth}'s treatise: many
elements related to their analysis are presented in \cite[\S5.2.2, pp.\@
128-134]{knuth} and in \cite[\S6.3]{knuth}.  But the sequence $(d_N)$
considered by \textsc{Prodinger} presents technical difficulties (some, but
not all, originating in the presence of the Riemann zeta function), and the
presentation in \cite{prodinger1993} is only descriptive.

The matter was mentioned with more details in \cite[p.\@ 113]{flajosedge1995}
by \textsc{Flajolet} and \textsc{Sedgewick} who explain (based on a private
communication by \textsc{Grabner}) why the integral on a vertical line in the
complex plane, used as starting point in \cite{prodinger1993}, does represent
$1-d_N$ (which is denoted $V_N$ in \cite{flajosedge1995}).  How the periodic
function $\delta_2$, which emerges asymptotically from the computed residues,
actually provides an approximation with an $o(1)$ error is not explained: only
an $O(\sqrt N)$ error bound is indicated. An $O(\sqrt N)$ occurred earlier in
\cite{flajosedge1995} (on page 112) when discussing a quantity $U_N$ studied
by \textsc{Knuth} on pages 130 to 133 of \cite{knuth}, which is $\sim
N\log_2(N)$.  Perhaps $O(1/\sqrt{N})$ was intended for $V_N$ which is
$\sim-\log_2(N)$?




Don~\textsc{Knuth} had
already studied in his Treatise \cite{knuth} combinations of Bernoulli numbers
of the type of Equation \eqref{eq:ember}, in particular
$\sum_{2\leq k < n}\binom{n}{k}\frac{B_k}{2^{k-1}-1}$ (let's call it $W_n$),
which occurs on the right-hand side of Eq.\@ (18) of \cite[\S6.3]{knuth}. It is
the subject of \cite[Ex.\@ 6.3-34]{knuth}.  This $W_n$ is not the same as
$e_n$, but $n^{-1}W_n$ is a close relative, and here too there is an
asymptotic with a logarithmic main term ($\frac12\log_2(n/\pi)$), a constant,
an oscillating factor with zero mean, and an error term which is $O(n^{-1})$.

In the present paper, we start by obtaining the analogous for the $(e_m)$
sequence, then we push the analysis further by another method and give the
full asymptotic to all orders in inverse powers of $m$ (which will be
decorated by periodic functions of $\log_2(m)$) and we then extend similarly
the result of \cite[Ex.\@ 6.3-34]{knuth}.
\begin{theo}\label{thm:em}
  The sequence $(e_m)$ obeys the following asymptotic:
  \begin{equation}\label{eq:asym}
e_m = \log_2(m) + \frac12 + \phi(\log_2(m)) + O(m^{-1}),
\end{equation}
where $\phi$ is a $1$-periodic function, which is
analytic for $|\Im t|<\pi/(2\log(2)$), has zero mean, and verifies
\begin{equation}\label{eq:thm2phiseries}
    \phi(t) = -t -\frac12 + \sum_{l=0}^\infty \bigl(1 - \frac{2^{t-l}}{\e^{2^{t-l}}-1}\bigr)
  - \sum_{l=-1}^{-\infty} \frac{2^{t-l}}{\e^{2^{t-l}}-1}\,,
\end{equation}
and
\begin{equation}\label{eq:thm2fourier}
  \phi(t) = - (\log 2)^{-1}
     \sum_{n\in\ZZ,n\neq 0}\Gamma\bigl(1-\frac{2\pi i n}{\log 2}\bigr)
                \zeta\bigl(1-\frac{2\pi i n}{\log 2}\bigr)\e^{2\pi i nt}\,.
\end{equation}
\end{theo}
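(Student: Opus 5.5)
Write $x=m+1$. The plan is to find an exact integral or series representation of $e_m$ to which the Poisson (exponential) approximation $(1-u)^{x}\approx \e^{-xu}$ can be applied. This reduces $e_m$ to a harmonic sum in $2^{-l}x$, which is handled by Mellin transform.

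\textbf{Step 1: exponential generating function.} Set $E(z)=\sum_{m\ge0} e_m z^{m+1}/(m+1)!$. Multiply \eqref{eq:emrec} by $z^{m+1}/(m+1)!$ and sum. Using $\sum_{j}\binom{m+1}{j}e_{m-j}$, the recurrence becomes a functional equation of the form
\[
E(z)=2\e^{z/2}E(z/2)+(\text{explicit elementary terms}),
\]
with the correction coming from $j=0$ and $j=m+1$. Iterating this in $z\to z/2$ gives $E$ as a sum over $l\ge0$ of explicit exponentials.

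Equivalently, and more simply, \eqref{eq:ember} should yield a closed form
\[
e_m=\sum_{l\ge1}\Bigl(1-(1-2^{-l})^{m+1}-(m+1)2^{-l}(1-2^{-l})^{m}\Bigr)+(\text{const}).
\]
To check this, expand the binomials: $\sum_l 2^{-lk}=1/(2^k-1)$, and the Bernoulli numbers then have to appear through the identity $\sum_{k}\binom{m+1}{k}B_k=0$ style manipulations. This is the step I would verify first against Table~\ref{table:3}, for instance $e_1=2$. If the Bernoulli form resists, I would instead derive the representation directly from the recurrence by the standard digital-tree argument, since $e_m$ is the expected depth.

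\textbf{Step 2: Poissonization.} Replace $(1-2^{-l})^{x}$ by $\e^{-x2^{-l}}$. The standard estimate is
\[
(1-q)^x-\e^{-qx}=O\bigl(xq^2\e^{-qx}\bigr),
\]
uniformly for $0\le q\le1/2$. Summing this over $q=2^{-l}$ gives an $O(1/x)$ error: the geometric sum $\sum_l xq^2\e^{-qx}$ is dominated by $q\approx 1/x$ and is $O(1/x)$. The term $l$ small, where $q=1/2$, is exponentially small anyway. This leaves $e_m=F(x)+O(1/m)$, where $F(x)=\sum_{l\ge1} f(2^{-l}x)$ for an explicit $f$, essentially $f(u)=1-\e^{-u}-u\e^{-u}$ or the Bose-type function $1-u/(\e^u-1)$ seen in \eqref{eq:thm2phiseries}.

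\textbf{Step 3: Mellin analysis.} The Mellin transform of $u/(\e^u-1)$ is $\Gamma(s+1)\zeta(s+1)$. For the harmonic sum, the Mellin transform is a suitable $f^*(s)\cdot 2^{s}/(1-2^{s})$ in a vertical strip. The poles are as follows:
\begin{itemize}
\item A double pole at $s=0$ produces $\log_2 x+\frac12$, after checking the constant using $\zeta(0)=-\frac12$.
\item The poles at $s=2\pi i n/\log 2$ give the Fourier series \eqref{eq:thm2fourier}.
\item The next poles, at $s=1$ or $s=-1$, give $O(1/x)$.
\end{itemize}
The replacement $\log_2(m+1)-\log_2 m=O(1/m)$ is harmless. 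The shift of contour is justified by the exponential decay of $\Gamma$ on vertical lines, which beats the polynomial growth of $\zeta$.

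\textbf{Step 4: the function $\phi$.} Define $\phi(t)$ by the direct series \eqref{eq:thm2phiseries}, noting that $x=2^t$ after reindexing. $\phi$ is $1$-periodic because shifting $t\to t+1$ shifts $l$, with the $-t$ compensating the moved term. Its Fourier coefficients are computed by unfolding: $\int_0^1\phi(t)\e^{-2\pi i nt}\,\mathrm{d}t$ becomes an integral over $\mathbb{R}$, which is a Mellin transform at $s=-2\pi i n/\log2$ and yields the stated $\Gamma\zeta$ values. The zero mean follows from the $n=0$ computation. Analyticity for $|\Im t|<\pi/(2\log 2)$ holds because $2^{t}$ then has argument below $\pi/2$, so the series converges locally uniformly there, given the exponential decay for positive $l$ and the $O(2^{t-l})$ decay for negative $l$.

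\textbf{Main obstacle.} I expect the main difficulty to be Step 1, obtaining a clean closed form linking \eqref{eq:emrec} or \eqref{eq:ember} to a sum over $l$, together with correctly tracking the constant $\frac12$.
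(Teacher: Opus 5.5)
\textbf{Step 1 is wrong, and the error carries into Step 2.} Your closed form is false. For $m=1$ the $l$-th summand is $4^{-l}$, so $e_1=2$ forces the constant to be $\frac53$. For $m=2$ the summand is $3\cdot4^{-l}-2\cdot8^{-l}$, which gives $\frac57+\frac53=\frac{50}{21}\neq\frac73=e_2$. This trie-type expression also cannot produce the theorem. Its Poissonized kernel $1-\e^{-u}-u\e^{-u}$ has Mellin transform $-(1+s)\Gamma(s)$, so Step 3 would output Fourier coefficients proportional to $(1+\chi_k)\Gamma(\chi_k)$, with no $\zeta$ at all. The correct exact identity, which the paper uses, has an inner sum over all $2^l$ residues:
\[
  e_m=\sum_{l\geq0}\Bigl(1-(m+1)2^{-l}\sum_{0\leq k<2^l}\bigl(k2^{-l}\bigr)^m\Bigr).
\]
It follows from \eqref{eq:ember} by writing $\frac{2^j}{2^j-1}=\sum_{l\geq0}2^{-jl}$ and recognizing $\sum_{0\leq j\leq m}\binom{m+1}{j}B_jn^{-j}=\frac{B_{m+1}(n)-B_{m+1}(0)}{n^{m+1}}$. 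Probabilistically, its $l$-th term is the probability that the minimum of $m+1$ independent uniform integers in $\{0,\dots,2^l-1\}$ is attained more than once. It is summing $\e^{-mk2^{-l}}$ over $k$ that creates the kernel $1-\frac{u}{\e^u-1}$, and hence $\zeta$.

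Your generating-function idea also leads somewhere, but the functional equation is $E(2z)=(\e^z+1)E(z)+\e^{2z}-1-2z$, not $2\e^{z/2}E(z/2)+\dots$. After dividing by $\e^{2z}-1$ it telescopes to $E(z)=(\e^z-1)f_z$, with $f$ as in \eqref{eq:fm}. Then $\e^{-z}E(z)=(1-\e^{-z})f_z$ is only the Poisson transform of $N\mapsto e_{N-1}$, so you would still need a depoissonization theorem.

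\textbf{Step 2 does not survive the correction.} With the inner sum over $k\leq n=2^l$, take the pointwise bound $m(k/n)^2\e^{-mk/n}$, multiply by $(m+1)/n$ and sum over $k\leq n/2$. This gives about $2/m$ for every level with $n\gg m$, uniformly in $l$, so the sum over $l$ diverges. The pointwise comparison works only for $2^l\leq m+1$ (Lemmas~\ref{lem:bound1}--\ref{lem:bound5}).

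For $2^l>m+1$ you must use the cancellation coming from both quantities tending to $1$. The paper writes $(m+1)p_m(n)=\sum_{j\leq m}\binom{m+1}{j}B_jn^{-j}$ and compares it termwise with $\sum_{j\geq0}B_j(m+1)^jn^{-j}/j!=\frac{(m+1)/n}{\e^{(m+1)/n}-1}$. The key inequality is $0\leq\frac{(m+1)^j}{j!}-\binom{m+1}{j}\leq\frac{j(j-1)}{2(m+1)}\frac{(m+1)^j}{j!}$, which yields an error $O\bigl(m^{-1}(m/n)^2\bigr)$ per level, and this is summable (Lemma~\ref{lem:bound6}).

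\textbf{Steps 3 and 4.} Once $e_m=f_m+O(1/m)$ is established, your Step 4 is exactly the paper's argument. Your Mellin Step 3 is a valid alternative for the same expansion, with two corrections. The constant $\frac12$ comes from $\frac{1}{1-2^s}=-\frac{1}{s\log 2}+\frac12+O(s)$ combined with $\Gamma(1+s)\zeta(1+s)=\frac1s+O(s)$, not from $\zeta(0)$. And there are no poles to the right of $\Re s=0$, so the entire $O(1/m)$ comes from Step 2.
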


\begin{figure}[htbp]
   \caption{$e_m - (\log 2)^{-1}H_{m+1}$ as function of $\log_2(m)$, and the
     graph of $-\frac{\gamma}{\log 2}+\frac12+\phi(t)$, $16\leq m \leq 1000$, $t=\log_2(m)$.}
\label{fig:2}
  \centering
\includegraphics[width=\linewidth]
   {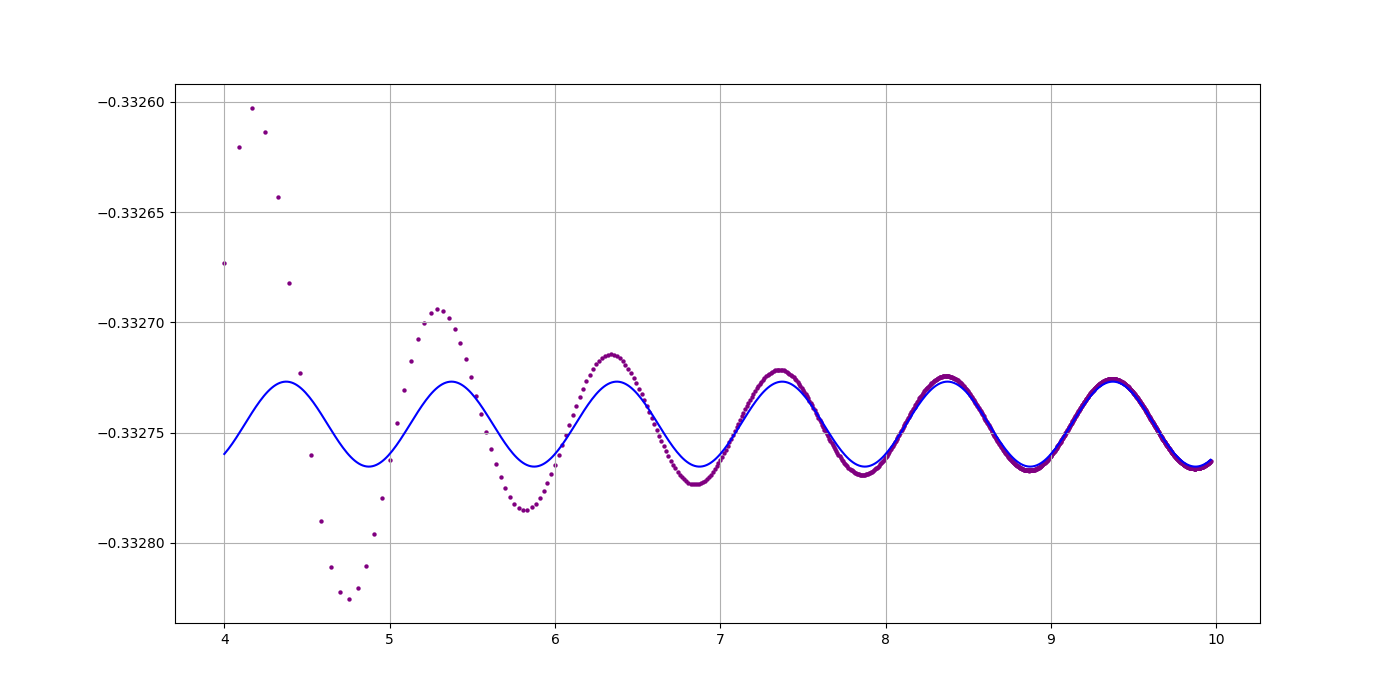}
\end{figure}
\begin{rema}
  Due to the $O(m^{-1})$ error term it does not matter here whether we are
  talking about $e_m$ ($=d_{m+1}$) or about $d_m$ from \cite[Thm.\@
  14]{prodinger1993}, or even about $e_{m-2}$ for example.  To this order,
  they are all the same.
\end{rema}
\begin{rema}\label{rem:psi}
  Numerical computations suggest that the $O(m^{-1})$ error term
  in~\eqref{eq:asym} is $m^{-1}\psi(\log_2(m)) + o(m^{-1})$ with some
  $1$-periodic function $\psi$ averaging to $\frac{3}{2\log(2)}$ (whether we
  use $m$ or $m+1$ now matters for what $\psi$ actually is).  Related to this,
  for $m$ up to a few thousands, $\frac{H_{m+1}}{\log(2)} - \frac{\gamma}{\log
    2} + \frac12$, which differs from $\log_2(m)+\frac12$ by a term equivalent
  to $\frac{3}{2\log(2)m}$, proves to be a much better approximation to $e_m$
  than $\log_2(m) + \frac12$.  These numerical observations are explained by
  the next Theorem.
\end{rema}

For both the $(e_m)$ sequence and the sequence of \cite[Ex.\@ 6.3-34]{knuth}
we prove that the periodic function is but the zeroth term of a complete
asymptotic expansion in inverse powers of $m$, with decorations by periodic
functions of $\log_2(m)$.
\begin{theo}\label{thm:fullasymp}
  Let the polynomials $P_j$ be defined by the conditions $P_0=1$, $P_j(0)=0$
  for $j>0$ and $P_j(t+1)- P_j(t)=-t P_{j-1}(t+1)$, so that $P_1(t) = -
  t(t-1)/2$, $P_2(t) = t(t-1)(t+1)(3t-2)/24$, and $P_3(t)=-t^2(t-1)^2(t+1)(t+2)/48$.  Let, for $k\in\ZZ$, non-zero, $\chi_k = 2\pi i k/\log(2)$.  Let $J$ be a positive integer.  There holds
  \begin{equation}\label{eq:emasymp}
    \log(2) e_m  = H_{m+1} + \frac{\log 2}2 - \gamma 
    \begin{aligned}[t]
      &- \psi_0\bigl(\log_2(m+2)\bigr)
\\
      &- \frac{\psi_1\bigl(\log_2(m+2)\bigr)}{m+2}
\\ &\dots 
\\
      &- \frac{\psi_{J-1}\bigl(\log_2(m+2)\bigr)}{(m+2)^{J-1}} + O\bigl(m^{-J}\bigr)\,,
    \end{aligned}
  \end{equation}
  where $\psi_j$ for $j\geq 0$ is the $1$-periodic function, with zero mean,
  given by the Fourier series
  \begin{equation}
    \psi_j(t) = \sum_{\substack{k\in\ZZ\\k\neq 0}}P_j(-\chi_k)\Gamma(1-\chi_k)\zeta(1-\chi_k)\e^{2\pi i kt}\,.
  \end{equation}
  Let $a$ be some arbitrary real number (for example $a=1$ or
  $2$).  The asymptotic development for $m\to\infty$ in inverse powers of $m+2
  - a$ is (keeping here $H_{m+1}$ exactly represented):
  \begin{equation}\label{eq:asympgen}
    e_m \log(2) \sim H_{m+1} + \frac{\log 2}2 - \gamma - 
     \sum_{j=0}^\infty \frac{\psi_{j,a}
                           \bigl(\log_2(m+2-a)\bigr)}{(m+2-a)^j} 
  \end{equation}
  with
  \begin{equation}
    \psi_{j,a}(t) = \sum_{\substack{k\in\ZZ\\k\neq 0}}
       P_{j,a}(-\chi_k)\Gamma(1-\chi_k)\zeta(1-\chi_k)\e^{2\pi i kt}\,,
  \end{equation}
   where the polynomials $P_{j,a}$ are determined by the conditions
$P_{0,a}=1$, $P_{j,a}(0) = 0$ for $j>0$, and $
    P_{j,a}(t+1) - P_{j,a}(t) = -(t + a)P_{j-1,a}(t+1)$.
\end{theo}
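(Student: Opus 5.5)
The plan is to write $e_m$ as a Nörlund–Rice (Mellin–Barnes) contour integral built from the Bernoulli-number formula~\eqref{eq:ember}, which can be taken as established in the paper. Then shift the contour to the left, collect residues, and expand each residue asymptotically in inverse powers of $m+2$ (or $m+2-a$). For $k\geq2$ we have $B_k=-k\zeta(1-k)$, so (treating the $k=1$ term separately, or absorbing it through the value of the meromorphic continuation at $s=1$) \eqref{eq:ember} becomes
\begin{equation*}
  e_m = \sum_{k=1}^{m} \binom{m+1}{k}(-1)^k\, f(k),\qquad
  f(s) = \frac{s\,\zeta(1-s)\,2^s}{2^s-1}\times(\text{sign/parity correction}).
\end{equation*}
Since $B_k=0$ for odd $k\geq3$, the factor $(-1)^k$ can be inserted freely for $k\geq2$. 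Rice's formula then gives
\begin{equation*}
  e_m = \frac{-1}{2\pi i}\int_{(c)} \frac{\Gamma(m+2)\,\Gamma(-s)}{\Gamma(m+2-s)}\, f(s)\,\mathrm{d}s
\end{equation*}
on a vertical line with $0<c<1$, up to the boundary terms $k=0,m+1$, which are handled directly. The decay of $\Gamma(-s)$ along vertical lines, together with the polynomial growth of $\zeta(1-s)$ and of $\Gamma(m+2)/\Gamma(m+2-s)$, justifies this step. (This is also the point where one confirms the Grabner argument quoted from \cite{flajosedge1995}.)

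Next, move the line of integration to $\Re s=-C$ for an arbitrary $C>0$. The poles crossed are the following.
\begin{itemize}
\item[(i)] The double pole at $s=0$, coming from $\Gamma(-s)$ and $1/(2^s-1)$. Here $\zeta(1-s)\,s$ is regular with value $-1$. Differentiating $\Gamma(m+2)/\Gamma(m+2-s)$ at $s=0$ produces $\psi(m+2)+\gamma=H_{m+1}$. With the other constants, this residue should give exactly $(\log 2)^{-1}\bigl(H_{m+1}+\tfrac{\log2}{2}-\gamma\bigr)$.
\item[(ii)] The simple poles at $s=\chi_k$, $k\neq0$. The residue there is
\begin{equation*}
  -(\log2)^{-1}\,\Gamma(1-\chi_k)\,\zeta(1-\chi_k)\,\frac{\Gamma(m+2)}{\Gamma(m+2-\chi_k)},
\end{equation*}
using $-\chi_k\Gamma(-\chi_k)=\Gamma(1-\chi_k)$.
\end{itemize}
On $\Re s=-C$, the factor $\zeta(1-s)$ is bounded, $\Gamma(-s)$ decays exponentially, and $\Gamma(m+2)/\Gamma(m+2-s)=O(m^{-C})$ uniformly on the line (with polynomial dependence on $|\Im s|$). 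So the remaining integral is $O(m^{-C})$, which is $O(m^{-J})$ once $C\geq J$.

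It then remains to expand $\Gamma(N)/\Gamma(N-s)$, with $N=m+2$ (or $N-a$ in place of $N$), in the Tricomi–Erdélyi form
\begin{equation*}
  \frac{\Gamma(N)}{\Gamma(N-s)} = N^{s}\sum_{j<J}P_j(-s)N^{-j} + R_J(s,N).
\end{equation*}
Writing $N^{\chi_k}=\e^{2\pi i k\log_2 N}$, each term in this expansion becomes the $k$-th Fourier coefficient of $\psi_j(\log_2 N)$. I would identify the polynomials $P_j$ through the functional relation $\Gamma(N)/\Gamma(N-s-1)=(N-s-1)\,\Gamma(N)/\Gamma(N-s)$. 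Inserting the expansion and comparing coefficients of $N^{s+1-j}$ should give exactly the difference equation $P_j(t+1)-P_j(t)=-tP_{j-1}(t+1)$ with $P_j(0)=0$. The normalisation $P_j(0)=0$ follows because the ratio equals $1$ at $s=0$. In the shifted version, $N=M+a$ and $N-s-1=(M-s)+(a-1)$ produce the $(t+a)$ factor, which checks the $P_{j,a}$ recursion. The same comparison shows that $P_j$ vanishes at the poles, so the $\psi_j$ have zero mean.

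The main obstacle is the summation over $k$ of the Tricomi–Erdélyi remainders. This needs a bound on $R_J(\chi_k,N)$ that is uniform in the unbounded imaginary parameter, for instance $|R_J(s,N)|\ll (1+|s|)^{2J}N^{\Re s-J}$ for $|\Im s|$ unrestricted and $N$ large. This is exactly the ``independent interest'' result announced in the abstract. Given such a bound, the exponential decay $|\Gamma(1-\chi_k)|\approx \e^{-\pi|\chi_k|/2}$ absorbs the polynomial growth in $|\chi_k|$, and the series of remainders sums to $O(m^{-J})$. I would first try to prove this bound from an integral representation: write $\log\Gamma(N)-\log\Gamma(N-s)$ using Binet's formula with an explicit Bernoulli remainder (valid uniformly for $|s|\leq\sqrt N$, say), and then exponentiate. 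In the complementary range $|s|>\sqrt N$, I would crudely bound both the ratio and the truncated sum by Stirling and use the factor $\e^{-\pi|s|/2}$ to make those $k$ negligible. Finally, dividing through by $(\log2)^{-1}$ yields~\eqref{eq:emasymp} and, with $N=m+2-a$, \eqref{eq:asympgen}.
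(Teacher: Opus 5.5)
\textbf{The gap is in your first step, the integral representation of $e_m$.} Your later steps match the paper's proof. The residues you list at $s=0$ and $s=\chi_k$ are exactly those of Theorem~\ref{thm:emexact} (your $s$ is the paper's $1-z$). The expansion of $\Gamma(m+2)/\Gamma(m+2-\chi_k)$ by a Tricomi--Erd\'elyi bound uniform for $|\chi_k|^2\le m$ is Theorem~\ref{thm:gammaratio}, used the same way. Your Binet-remainder route to that bound is a legitimate alternative to the paper's Cauchy-estimate argument. Your derivation of the recurrences for $P_j$ and $P_{j,a}$ from $\Gamma(N)/\Gamma(N-s-1)=(N-s-1)\Gamma(N)/\Gamma(N-s)$ is correct.

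The identity $e_m=\frac1{2\pi i}\int_{(c)}\frac{\Gamma(m+2)\Gamma(-s)}{\Gamma(m+2-s)}\,s\,\zeta(1-s)\frac{2^s}{2^s-1}\,ds$, with $0<c<1$, is true. It is \eqref{eq:emmellin} after $z=1-s$. Note that the prefactor is $+\frac1{2\pi i}$, and no parity correction is needed, since $B_k=(-1)^{k-1}k\zeta(1-k)$ for all $k\ge1$. But your argument does not prove it.

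Rice's formula gives $\sum_{k=1}^m$ as an integral over a contour around $1,\dots,m$. To reduce this to the single line $\Re s=c$, you must dispose of the right-hand part of the contour. That requires moderate growth of $f$ in the whole right half-plane, which is the hypothesis in the paper's version of the Rice lemma; polynomial growth on vertical lines is not enough. Here $\zeta(1-s)=2(2\pi)^{-s}\cos(\frac{\pi s}2)\Gamma(s)\zeta(s)$ grows factorially as $\Re s\to+\infty$, so you cannot close to the right.

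Concretely, your ``boundary term'' $k=m+1$ is $(-1)^{m+1}f(m+1)=-B_{m+1}\frac{2^{m+1}}{2^{m+1}-1}$. For odd $m$ this has size about $2(m+1)!/(2\pi)^{m+1}$, so it is not something to ``handle directly''. Closing to the right would make the line integral equal $\sum_{k=1}^{m+1}$, which is false whenever $B_{m+1}\neq0$, because the true value is $e_m=\sum_{k=1}^{m}$. What must be shown is that the right side of the Rice rectangle, placed on $\Re s=m+\frac14$, contributes exactly $0$. This is the non-trivial point behind Grabner's argument in \cite{flajosedge1995}, and the justification you give for it cannot work.

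\textbf{Two ways to repair it.}

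\emph{Follow the paper.} Start from the power-sum formula \eqref{eq:emformula} rather than from Bernoulli numbers. Write each $1-(m+1)x^{-1}\sum_{0<k\le x}(1-k/x)^m$ via the Mellin--Perron identity \eqref{eq:mellin}, then sum over $x=2^l$. Every contour shift then involves only $x^{z}$ and $\zeta(z)$ with $\Re z\ge\frac12$, and \eqref{eq:emmellin} comes out directly.

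\emph{Keep Rice and prove the vanishing.} By the functional equation, the integrand on $\Re s=m+\frac14$ equals $-\pi\Gamma(m+2)\frac{(2\pi)^{-s}}{\Gamma(m+2-s)\sin(\pi s/2)}\sum_{l\ge0}\sum_{n\ge1}(2^ln)^{-s}$. On that line the non-arithmetic factor is $O(|\Im s|^{-5/4})$, so you may integrate termwise. For each $N=2^ln$, move the line left through the poles of $1/\sin(\pi s/2)$ at the even integers. The result is a constant multiple of $(2\pi N)^{-m-1}\sin(2\pi N)$ or of $(2\pi N)^{-m-1}(\cos(2\pi N)-1)$, hence $0$.

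\textbf{Smaller corrections.}
\begin{itemize}
\item $\Gamma(m+2)/\Gamma(m+2-s)$ is not $O(m^{-C})$ with polynomial dependence on $\Im s$: for fixed $m$ it grows like $\e^{\pi|\Im s|/2}$ up to powers. Estimate instead the rational kernel $\Gamma(-s)\Gamma(m+2)/\Gamma(m+2-s)=(m+1)!/\prod_{j=0}^{m+1}(j-s)$. On $\Re s=-C$ its modulus is at most $\Gamma(C)\Gamma(m+2)/\Gamma(m+2+C)$.
\item In the tail $|\chi_k|>\sqrt m$, the decay $\e^{-\pi|\chi_k|/2}$ of $\Gamma(1-\chi_k)$ is cancelled once $|\chi_k|\gg m$. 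That tail must be bounded through $(m+1)!/\prod_{j=1}^{m+1}|j-\chi_k|$, as the paper does.
\item Since $2^s/(2^s-1)\to0$ as $\Re s\to-\infty$, you can push the line all the way left. This gives the exact formula \eqref{eq:emexact} instead of an $O(m^{-C})$ remainder.
\end{itemize}
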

\begin{rema}
  The $\sim$ notation is used in the second part of the Theorem as a shortcut
  for a sequence of statements with finite sums and big-$O$'s as in Equation
  \eqref{eq:emasymp} of its first part.

  To get the complete expansion of the quantity $d_N$ of \cite[Thm.\@
  14]{prodinger1993} in inverse powers of $N$, use the above Theorem with
  $m=N-1$ and $a=1$.  One has actually $P_{j,1}(t) = P_{j,0}(t+1)$, due to
  the phenomenon that $P_{j,0}(1)= 0$ for all $j>0$.  See the start of
  Theorem \ref{thm:fullasymp} for the polynomials $P_{j,0}$, $1\leq j \leq
  3$, hence also, replacing $t$ by $t+1$, for the $P_{j,1}$'s. But no similar
  relation exists with $P_{j,2}(t)$.  One finds $P_{1,2}(t) = -t(t+3)/2$,
  $P_{2,2}(t) = t(t+1)(t+2)(3t+13)/24$, and $P_{3,2}(t) =
  -t(t+1)(t+2)(t+3)(t^2+7t+8)/48$.  See Remark \ref{rem:Pja} for the relation
  of the polynomials $P_{j,a}$ with generalized Bernoulli polynomials.
\end{rema}

A topic of independent interest is that in order to establish Theorem
\ref{thm:fullasymp} we needed to prove the validity of the
\textsc{Tricomi}-\textsc{Erdélyi} asymptotic expansions \cite{tricomi1951} for
$\Gamma(z+\alpha)/\Gamma(z+\beta)$ as $z\to\infty$, without requiring
$\max(|\alpha|, |\beta|)= O(1)$.   For our purposes, the case of $z$ real
positive, or even integer, would have sufficed, but allowing (at least one)
unbounded imaginary parameters is required.

We prove that the condition $\max(|\alpha|^2, |\beta|^2)= O(|z|)$ suffices,
see Theorem \ref{thm:gammaratio} for the precise statement; mind in particular
that if we choose in some manner parameters $\alpha(z)$, $\beta(z)$, we need
in general (but not always)
$\max(|\alpha(z)|^2,|\beta(z)|^2)=o(|z|)$ to claim an \emph{asymptotic
  expansion} (of some sort) after the substitution.
Alternative methods of proof could perhaps start with revisiting how the
asymptotic for fixed $\alpha$ and $\beta$ can be obtained via \emph{Watson's
  Lemma} (cf.\@ \cite[\S1.4]{luke1969volI}, \cite[\S5.1]{olver1997}, \cite[\S
I.5]{wong2001}), with the perspective to check the uniformity under the
condition $\max(|\alpha|^2, |\beta|^2)= O(|z|)$, or examine with the same
uniformity perspective the Stirling asymptotics of $\log \Gamma(z+a)$ as
obtained via the Euler-Maclaurin expansion (\cite[\S4.1]{olver1997}).  We have
chosen an approach somewhat akin to that latter option, but with starting
point the no-parameter Stirling series for $\log\Gamma(z)$ (\cite[\S12.33,
\S13.6]{whittakerwatson4thed}, \cite[\S2.11]{luke1969volI}), and not needing
any explicit formula neither for the coefficients nor the remainder.

  All the literature we could peruse assumes the
  parameters to be fixed or at least bounded. 
  We did not find, e.g.\@ in \cite{erdelyiI}, \cite{abrasteg},
  \cite{luke1969volI} or \cite{olver1997}, the statement we needed, nor in any
  other reference we could access.  For example the \textsc{NIST} Digital
  Library of Mathematical Functions \cite[\S5.11(iii)]{NIST:DLMF} explicitly
  states (\url{https://dlmf.nist.gov/5.11.iii})
that the parameters are constants. As an entry point to the recent
  literature, we mention \cite{yangtianha2020} and the review paper
  \cite{qi2010}.  These references appear to be mainly (even exclusively)
  occupied with the real variable, but they include numerous references and
  discuss the earlier literature. As in \cite{tricomi1951} or
  \cite{fields1966} the parameters appear to be fixed or at least bounded.


\section{Geometric series for \texorpdfstring{$\gamma$}{gamma}}

We obtain Theorem~\ref{thm:main} as a corollary of \cite[Th.\@ 1]{burnolzeta_umosc}.
\begin{proof}[Proof of Theorem \ref{thm:main}]
  We consider the sequence $(c_m(s))$ 
  of holomorphic functions in the right half-plane
  $\Re s>0$ which are defined inductively by $c_0(s)=1$ and, for $m\geq1$:
  \begin{equation}\label{eq:cm}
    c_m(s) = \frac1{2^{m+s}-2}\sum_{j=1}^m \binom{m}{j} c_{m-j}(s)\,.
  \end{equation}
  As we have limited to $\Re(s)>0$, dividing by $2^{m+s}-2$ with $m\geq1$
  introduces no poles.  Using the notations of \cite{burnolzeta_umosc}, we
  have $c_m(s)=(2^s -2)2^{-s}u_m(s)$, where the sequence $(u_m(s))_{m\geq0}$
  verifies the same recurrence, but is initialized with $u_0(s) =
  2^{s}/(2^s-2)$ (which has poles on the line $\Re s = 1$).  In
  \cite{burnolzeta_umosc}, there is more generally an integer parameter $b>1$, which
  in Equation~\eqref{eq:cm} has been set to $b=2$.  There are generalizations
  of Theorem~\ref{thm:main} to a general $b>1$, where the analog of the
  recurrence from Equation~\eqref{eq:emrec} has additional factors given by
  the power sums of the base-$b$ digits (see
  \cite[Eq.\@(5)]{burnolzeta_umosc} for how Equation \eqref{eq:cm} must be
  formulated if using powers of $b>2$).

  Let $\ell\geq2$, the \emph{level}, be some integer greater than one.
  As a corollary to \cite[Th.\@ 1 \& Eq.\ (4)]{burnolzeta_umosc}, the Euler alternating
  series $\eta(s)=\sum_{n=1}^\infty (-1)^{n-1}n^{-s}=(2^s-2)2^{-s}\zeta(s)$
  (here $\Re s>0$) verifies, with local uniform convergence of the series:
  \begin{equation}\label{eq:eta}
    \eta(s) = \frac{2^s-2}{2^s}\sum_{0<n<2^{\ell-1}} \frac1{n^s}
      + \sum_{2^{\ell-1}\leq n<2^{\ell}} \frac1{n^{s}}
      + \sum_{m=1}^\infty (-1)^m \frac{(s)_m}{m!} c_m(s) 
                 \sum_{2^{\ell-1}\leq n<2^{\ell}} \frac1{n^{s+m}}\,.
  \end{equation}
We compute the
  derivative at $s=1$ (here $H_0=0$):
  \begin{equation}\label{eq:der1}
    \eta'(1) = (\log 2) H_{2^{\ell-1}-1} 
        + \sum_{m=0}^\infty(-1)^m\left\{
        \begin{aligned}[c]
          &H_mc_m(1)\sum_{2^{\ell-1}\leq n<2^{\ell}} \frac1{n^{m+1}}\\
          &+c_m'(1)\sum_{2^{\ell-1}\leq n<2^{\ell}} \frac1{n^{m+1}}\\
          &-c_m(1)\sum_{2^{\ell-1}\leq n<2^{\ell}} \frac{\log n}{n^{m+1}}
        \end{aligned}\right.
  \end{equation}
  One checks easily from Equation \eqref{eq:cm} that $c_m(1) = (m+1)^{-1}$ for
  $m\geq0$.  Leaving temporarily aside the
  consideration of the derivative $c_m'(1)$, we compute:
  \begin{align*}
   &\sum_{m=0}^\infty(-1)^m\biggl(
          \frac{-1}{m+1}\sum_{2^{\ell-1}\leq n<2^{\ell}} \frac{\log n}{n^{m+1}}
          + \frac{H_m}{m+1}\sum_{2^{\ell-1}\leq n<2^{\ell}} \frac1{n^{m+1}}
       \biggr)
\\
  &=\sum_{2^{\ell-1}\leq n<2^{\ell}}
   \left(-\log n \log(1 + \frac1n) 
   + \left.\dds\right|_{s=1} 
        \sum_{m=1}^\infty\frac{(-1)^m}{m+1}\frac{(s)_m}{m!}(1/n)^{m+1} \right)
\\
  &=\sum_{2^{\ell-1}\leq n<2^{\ell}}
   \left(-\log n \log(1 + \frac1n) 
    + \left.\dds\right|_{s=1} \int_0^{\frac1n}(1 + x)^{-s}\dx\right)
\\
  &=\sum_{2^{\ell-1}\leq n<2^{\ell}}
   \left(-\log n \log(1 + \frac1n) + 
   \int_0^{\frac1n}\frac{-\log(1+x)}{(1 + x)}\dx\right)
\\
&=
\sum_{2^{\ell-1}\leq n<2^{\ell}}
   \left(-\log n \log(1 + \frac1n) - \frac12\log^2(1+\frac1n)\right)
\\
&=
-\frac12\sum_{2^{\ell-1}\leq n<2^{\ell}} \log(1+\frac1n)\log \bigl(n^2(1+\frac1n)\bigr)
=
-\frac12\sum_{2^{\ell-1}\leq n<2^{\ell}} \bigl(\log^2(n+1)-\log^2(n)\bigr)
\\
&=
-\frac12\Bigl(\ell^2\log^22 - (\ell-1)^2\log^22\Bigr) = -(\ell-\frac12)\log^22.
  \end{align*}
Hence, from Equation~\eqref{eq:der1}:
\begin{equation*}
  \eta'(1) =  (\log 2) H_{2^{\ell-1}-1}  -(\ell-\frac12)\log^22
 + \sum_{m=0}^\infty(-1)^m c_m'(1)\sum_{2^{\ell-1}\leq n<2^{\ell}} \frac1{n^{m+1}}.
\end{equation*}
Now, as is well-known, $\eta'(1) = \log(2)(\gamma - \frac12 \log2)$, so, with
the definition
\begin{equation}\label{eq:emdef}
  e_m = -(m+1)\frac{c_m'(1)}{\log 2},
\end{equation}
(in particular $e_0=0$), we obtain
\begin{equation*}
  \gamma =  H_{2^{\ell-1}-1}- (\ell - 1)\log 2 
  + \sum_{m=1}^{\infty}(-1)^{m-1}\frac{e_m}{m+1}\sum_{2^{\ell-1}\leq n<2^{\ell}} \frac1{n^{m+1}}.
\end{equation*}
The proof of Theorem~\ref{thm:main} will be complete once it has been
confirmed that the $(e_m)$ sequence defined by Equation~\eqref{eq:emdef} verifies the
recurrence~\eqref{eq:emrec}. Taking the derivative at $s=1$ of Equation
\eqref{eq:cm}, and recalling $c_m(1) = (m+1)^{-1}$, we obtain, for $m\geq1$:
\begin{align*}
  c_m'(1) &= \frac{-(\log 2)2^{m+1}}{2^{m+1}-2}c_m(1) 
            +(2^{m+1}-2)^{-1}\sum_{j=1}^m\binom mj c_{m-j}'(1)\\
e_m &= \frac{2^{m+1}}{2^{m+1}-2} +
      (2^{m+1}-2)^{-1}\sum_{j=1}^m\binom mj\frac{m+1}{m-j+1}e_{m-j}.
\end{align*}
This, indeed, is Equation~\eqref{eq:emrec}.  And $e_0=0$.
\end{proof}

\section{Elementary bounds}

In this section, we mention two elementary estimates which can be established
using only the recurrence~\eqref{eq:emrec}. As such results are superseded for
large $m$ (if one goes through the proof to get explicit bounds) by Theorem
\ref{thm:em} and by explicit numerical results for $m$ of moderate size, we
omit the proofs and refer the interested reader to
\url{https://arxiv.org/abs/2603.29998v1}.  We note that the main term
$(\log 2)^{-1}H_{m+1}$ is indeed a better reference point than $\log_2(m)$ in
view of Theorem \ref{thm:fullasymp} (and the computations leading to it).
\begin{prop}\label{prop:1}
  There holds for any $m\in\NN$ (with $H_n=\sum_{j=1}^n j^{-1}$):
  \begin{equation}\label{eq:propa}
    \frac{H_{m+1} - 1}{\log 2} \leq e_m
    < \frac{H_{m+1}}{\log 2} -\np{0.161}\,.
  \end{equation}
  More precisely, for $m\geq2$:
  \begin{equation}\label{eq:propb}
    \frac{H_{m+1}}{\log 2} - \np{0.35} < e_m <  \frac{H_{m+1}}{\log 2} - \np{0.31}\;.
  \end{equation}
\end{prop}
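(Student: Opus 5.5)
The plan is to work with the shifted quantity $f_m = e_m - H_{m+1}/\log 2$ and show that the recurrence~\eqref{eq:emrec} turns into an averaging recurrence for $f_m$ with a small forcing term. We rewrite \eqref{eq:emrec} as
\begin{equation*}
  (2^{m+1}-2)e_m = 2^{m+1} + \sum_{j=1}^{m}\binom{m+1}{j}e_{m-j}.
\end{equation*}
Since $\sum_{j=1}^{m}\binom{m+1}{j} = 2^{m+1}-2$, the coefficients $\binom{m+1}{j}/(2^{m+1}-2)$ for $1\le j\le m$ form a probability distribution on the earlier indices. So $e_m$ is $2^{m+1}/(2^{m+1}-2)$ plus a weighted average of $e_0,\dots,e_{m-1}$.

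Setting $h_m=H_{m+1}/\log 2$, we get
\begin{equation*}
  f_m = \sum_{j=1}^m w_{m,j} f_{m-j} + r_m,
\end{equation*}
where $w_{m,j}=\binom{m+1}{j}/(2^{m+1}-2)$ and
\begin{equation*}
  r_m = \frac{2^{m+1}}{2^{m+1}-2} + \sum_j w_{m,j}h_{m-j} - h_m .
\end{equation*}
The key computation is the exact evaluation of $\sum_{k=0}^{m+1}\binom{m+1}{k}H_{k}$. Use $H_k=\int_0^1(1-x^k)/(1-x)\,\dx$ to get $2^{m+1}H_{m+1}-\sum_{i=1}^{m+1}2^{m+1-i}/i$. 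The forcing then becomes
\begin{equation*}
  r_m=\frac{2^{m+1}}{2^{m+1}-2}-\frac{1}{\log 2}\,\frac{\sum_{i=1}^{m+1}2^{m+1-i}/i - H_{m+1}+\ldots}{2^{m+1}-2},
\end{equation*}
up to the boundary terms $j=0$ and $j=m+1$, which must be handled carefully. Since $\sum_i 2^{-i}/i=\log 2$ with a geometrically small tail, $r_m$ should be $O(H_m 2^{-m})$ plus a tail of size about $2^{-m}/m$. So $r_m$ is exponentially small, and its sign is explicit for all $m$.

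With this in hand, both bounds follow by a maximum principle. If $A\le f_k\le B$ for all $k<m$, then
\begin{equation*}
  A+r_m\le f_m\le B+r_m .
\end{equation*}
For \eqref{eq:propa}, the lower bound $f_m\ge -1/\log 2$ holds at $m=0$ (where $f_0=-1/\log2$). It then propagates provided $r_m\ge 0$, or provided the accumulated $\sum r_m$ stays controlled. The upper bound $f_m<-0.161$ is checked at $m=0,1$ ($f_1\approx-0.164$). It propagates if $r_m\le0$ for $m\ge2$, or more robustly by bounding $\sup f$ through $\max(f_0,\dots,f_{M})+\sum_{m>M}|r_m|$.

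For \eqref{eq:propb}, the same argument is restarted at a base index $M$ of moderate size. The values $f_2,\dots,f_M$ are computed from Table~\ref{table:2}; they lie in $(-0.339,-0.311)$. The weight that later terms put on $f_0,f_1$ is $w_{m,m}+w_{m,m-1}$, roughly $m^2 2^{-m}$, which is exponentially small and can be absorbed together with $\sum_{m>M}|r_m|$ into the margins $0.35-0.339$ and $0.311-0.31$.

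The main obstacle is this last point. The interval for $m\geq2$ is tight, and the early values $f_0\approx-1.44$ and $f_1\approx-0.164$ fall outside it. So one needs an explicit bound on the cumulative leakage $\sum_{m>M}(m^2 2^{-m}\cdot 1.3+|r_m|)$ that beats the margin $0.001$. I expect to take $M$ around $20$ to $30$ and verify the finitely many initial cases by exact rational computation.
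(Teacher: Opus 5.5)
Your argument is correct. The paper does not reproduce its proof: it says only that these bounds follow from the recurrence~\eqref{eq:emrec} alone and defers to arXiv v1. So there is no detailed route to compare against; yours is of that recurrence-only kind, with a finite computation added.

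A few points should be settled rather than hedged. First, the boundary terms add nothing. In your notation $f_m=e_m-H_{m+1}/\log 2$, the forcing is exactly
\[
r_m=\frac{H_{m+1}+\sum_{k\geq1}2^{-k}/(m+1+k)}{(2^{m+1}-2)\log 2}>0 .
\]
Consequently the lower bound in \eqref{eq:propa} is an immediate induction from $f_0=-1/\log 2$, and the ``$r_m\leq 0$'' branch you offer for the upper bound is void.

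Second, the upper bound in \eqref{eq:propa} needs no numerics. Since $w_{1,1}=1$, one has $f_1-f_0=r_1=2-\frac{1}{2\log 2}$. For $m\geq2$, the inequality $r_m\leq w_{m,m}r_1$ is equivalent to $H_{m+1}+\sum_{k\geq1}2^{-k}/(m+1+k)\leq (m+1)(2\log 2-\frac12)$. This holds at $m=2$ ($2.05<2.66$), and it persists because the left side grows by at most $\frac14$ per step while the right side grows by $0.886$. Hence, if $f_k\leq f_1$ for $1\leq k<m$, then $f_m\leq f_1-w_{m,m}(f_1-f_0)+r_m\leq f_1$. This gives $f_m\leq f_1=2-\frac{3}{2\log 2}\approx-0.16404$ for all $m\geq1$. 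Your route via $\max_{k\leq M}f_k+\sum_{m>M}r_m$ also works, but only once $M\geq 10$, because $r_2\approx 0.49$.

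For \eqref{eq:propb}, explicit small-$m$ input is genuinely unavoidable, since $f_2\approx-0.31161$ leaves only about $0.0016$ of room. Your plan with $M=20$ and Table~\ref{table:2} suffices. The upward leakage through $f_1$ is at most $0.15\,w_{m,m-1}$, and the downward leakage through $f_0$ is at most $1.14\,w_{m,m}$. Together with $\sum_{m>20}r_m$, these add up to less than $10^{-4}$.
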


\section{The coefficients \texorpdfstring{$e_m$}{em} via power sums}

We have seen in the proof of Theorem~\ref{thm:main} that we can define $e_m$
as $-(m+1)c_m'(1)/\log 2$ where the holomoprhic functions $c_m(s)$ on the
half-plane $\Re s>0$ verify Equation~\eqref{eq:cm}.  Further, we explained
that $c_m(s) = \frac{2^s-2}{2^s}u_m(s)$ (in particular $u_0(s) =
\sum_{l=0}^\infty 2^{-l(s-1)} = 2^s/(2^s-2)$), with some sequence
$(u_m(s))_{m\geq0}$ of meromorphic functions which is defined in
\cite{burnolzeta_umosc}.  It is mentioned in \cite[\S2,
Eq. (3)]{burnolzeta_umosc}, following up on \cite[\S2, Def.\@3]{burnolzeta},
that for $\Re s>1$, $u_m(s)$ is the $m$th moment of a certain discrete
(complex) measure with support in $\Ifo 01$:
\begin{equation}
  \label{eq:um}
  \Re s>1\implies u_m(s) =
   \sum_{l=0}^\infty \biggl(\sum_{0\leq k <2^l}\bigl(\frac k{2^l}\bigr)^m\biggr) 2^{-ls}\,.
\end{equation}
As the function $t\mapsto t^m$ is non-decreasing and convex, there
holds
\begin{equation}\label{eq:rectangles}
  0\leq 
  \int_0^1 t^m\dt - 2^{-l}\sum_{0\leq k <2^l}\bigl(\frac k{2^l}\bigr)^m 
  \leq \frac12 2^{-l}.
\end{equation}
So, at first for $\Re s > 1$:
\begin{align*}
  u_m(s) &= (m+1)^{-1} \sum_{l=0}^\infty 2^{-l(s-1)} - 
\sum_{l=0}^\infty \biggl((m+1)^{-1} - 2^{-l}\sum_{0\leq k <2^l}\bigl(\frac k{2^l}\bigr)^m\biggr)
2^{-l(s-1)}
\\
(m+1)u_m(s) &=
\frac{2^s}{2^s-2} - 
\sum_{l=0}^\infty \biggl(1 - (m+1)2^{-l}\sum_{0\leq k <2^l}\bigl(\frac k{2^l}\bigr)^m\biggr)
2^{-l(s-1)}
\\
(m+1)c_m(s) &= 1 -\frac{2^s-2}{2^s}
\sum_{l=0}^\infty \biggl(1 - (m+1)2^{-l}\sum_{0\leq k <2^l}\bigl(\frac k{2^l}\bigr)^m\biggr)
2^{-l(s-1)}.
\end{align*}
Thanks to estimate~\eqref{eq:rectangles}, this gives the analytic continuation
to $\Re s>0$, and we can now compute the value of the derivative at $s=1$:
\begin{equation*}
  (m+1)c_m'(1) = -\log(2)\sum_{l=0}^\infty \biggl(1 - 
   (m+1)2^{-l}\sum_{0\leq k <2^l}\bigl(\frac k{2^l}\bigr)^m\biggr).
\end{equation*}
From Equation~\eqref{eq:emdef} which arose in the course of the proof of
Theorem~\ref{thm:main}, we thus get an ``explicit'' formula for the coefficients $e_m$:
\begin{prop}
  There holds:
  \begin{equation}
\label{eq:emformula}
    e_m = \sum_{l=0}^\infty \biggl(1 - (m+1)2^{-l}\sum_{0\leq k <2^l}\bigl(\frac k{2^l}\bigr)^m\biggr)\,.
\end{equation}
\end{prop}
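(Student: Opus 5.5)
The plan is to turn the computation just above into a rigorous argument. The only points needing care are the identification of $c_m(s)$ with the series for $\Re s>1$, and the analytic continuation to $\Re s>0$ that lets us differentiate at $s=1$.

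\emph{Step 1: the series representation for $\Re s>1$.} I take as given from \cite{burnolzeta_umosc} that $c_m(s)=\frac{2^s-2}{2^s}u_m(s)$, with $u_m(s)$ given by Equation~\eqref{eq:um} for $\Re s>1$. If one prefers not to rely on that citation, this can be checked directly from the recurrence~\eqref{eq:cm}. Split the inner sum over $0\leq k<2^{l+1}$ into even $k=2k'$ and odd $k=2k'+1$. Expanding $\bigl(\frac{k'}{2^l}+\frac1{2^l}\bigr)^m$ binomially shows that the moments $u_m$ satisfy $2^{m+s}u_m(s)=2u_m(s)+\sum_{j\geq1}\binom mj u_{m-j}(s)$. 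This is the recurrence~\eqref{eq:cm}, and since $u_0=2^s/(2^s-2)$ the identification follows.

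\emph{Step 2: rewrite and continue analytically.} Write $a_l:=1-(m+1)2^{-l}\sum_{0\leq k<2^l}(k/2^l)^m$. As displayed above, for $\Re s>1$ this gives
\begin{equation*}
(m+1)c_m(s)=1-\frac{2^s-2}{2^s}\sum_{l\geq0}a_l\,2^{-l(s-1)}.
\end{equation*}
By~\eqref{eq:rectangles} we have $0\leq a_l\leq \frac{m+1}2 2^{-l}$. Hence $\sum_l a_l 2^{-l(s-1)}$ converges normally on compact subsets of $\Re s>0$, because $|2^{-l(s-1)}|\,2^{-l}=2^{-l\Re s}$. The right-hand side is therefore holomorphic on $\Re s>0$. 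It agrees with the holomorphic function $(m+1)c_m(s)$ on $\Re s>1$, so by the identity theorem the two agree on all of $\Re s>0$.

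\emph{Step 3: differentiate at $s=1$.} Set $F(s)=\sum_l a_l2^{-l(s-1)}$; it is holomorphic near $s=1$. Put $g(s)=(2^s-2)/2^s$, so that $g(1)=0$ and $g'(1)=\log 2$. Then $\frac{d}{ds}\bigl(g F\bigr)(1)=g'(1)F(1)$, which gives
\begin{equation*}
(m+1)c_m'(1)=-\log 2\sum_{l\geq0}a_l.
\end{equation*}
The series $\sum_l a_l$ converges since $a_l=O(2^{-l})$. Substituting into the definition~\eqref{eq:emdef}, $e_m=-(m+1)c_m'(1)/\log 2$, yields~\eqref{eq:emformula}.

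The main obstacle is Step 1, justifying~\eqref{eq:um}. It is either quoted from \cite{burnolzeta_umosc} or verified through the even/odd splitting above. In that verification one must track the $l=0$ term and the normalization carefully, so that the initial value $u_0=2^s/(2^s-2)$ comes out right. The remaining steps are routine uniform-convergence arguments.
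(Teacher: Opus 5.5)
Your main line is the paper's proof. You take $c_m(s)=\frac{2^s-2}{2^s}u_m(s)$ and the moment series~\eqref{eq:um} for $\Re s>1$ from the cited work. You use~\eqref{eq:rectangles} to continue $(m+1)c_m(s)=1-\frac{2^s-2}{2^s}\sum_l a_l2^{-l(s-1)}$ to $\Re s>0$. You then differentiate at $s=1$, where the prefactor vanishes, so only $\log 2\sum_l a_l$ survives. Your Steps~2 and~3 are correct; they spell out the normal convergence and the product-rule computation that the paper states in one line.

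The self-contained check you offer for Step~1 does not work as written. Write $S_m(l)=\sum_{0\le k<2^l}(k/2^l)^m$. With the even/odd split $k=2k'+d$, the odd terms are $\bigl(\frac{k'}{2^l}+\frac{1}{2^{l+1}}\bigr)^m$, not $\bigl(\frac{k'}{2^l}+\frac{1}{2^{l}}\bigr)^m$. The appended digit carries the $l$-dependent weight $2^{-l-1}$. So the binomial expansion produces $\sum_l S_{m-j}(l)\,2^{-l(s+j)}$, that is $u_{m-j}(s+j)$, not $u_{m-j}(s)$. What you actually obtain is the true but different relation
\begin{equation*}
(1-2^{1-s})\,u_m(s)=0^m+\sum_{j\geq1}\binom{m}{j} 2^{-s-j}\,u_{m-j}(s+j),
\end{equation*}
with shifted arguments, not~\eqref{eq:cm}.

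The split that works is by the \emph{leading} binary digit. For $0\le k<2^{l+1}$ write $k=k'+d\,2^l$ with $d\in\{0,1\}$ and $0\le k'<2^l$, so that $(k/2^{l+1})^m=2^{-m}(d+k'/2^l)^m$. This gives $S_m(l+1)=2^{-m}\bigl(2S_m(l)+\sum_{j\geq1}\binom{m}{j} S_{m-j}(l)\bigr)$. Sum against $2^{-(l+1)s}$ and add the $l=0$ term $S_m(0)=0^m$ (with $0^0=1$). You get $u_m(s)=0^m+2^{-m-s}\bigl(2u_m(s)+\sum_{j\geq1}\binom{m}{j} u_{m-j}(s)\bigr)$. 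For $m\geq1$ this is~\eqref{eq:cm}, and for $m=0$ it yields $u_0=2^s/(2^s-2)$. With this correction, or by simply relying on the citation as the paper does, your argument is complete.
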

In terms of Bernoulli polynomials and numbers:
\begin{equation}\label{eq:ember2}
e_m = \sum_{l=0}^\infty \biggl(1 - \frac{B_{m+1}(2^l)-B_{m+1}(0)}{2^{(m+1)l}}\biggr)
=
-\sum_{k=1}^m \binom{m+1}{k}B_k\frac{2^k}{2^k-1}\,.
  \end{equation}
  To obtain Equation~\eqref{eq:ember2},
  we use the properties of Bernoulli numbers and
  polynomials as summarized below, and  $2^k/(2^k-1)$ is obtained as 
  $\sum_{l=0}^\infty 2^{-kl}$. As $B_{m+1}(1)-B_{m+1}(0)=0$ if $m>0$,
  the $l=0$ term contributes $+1$ to $e_m$,
  and there also
  holds:
\begin{equation*}
  m\geq1\implies e_m = 1 - \sum_{k=1}^m \binom{m+1}{k}B_k\frac{1}{2^k-1}\,.
\end{equation*}
  The Bernoulli polynomials and numbers used in~\eqref{eq:ember2} are defined
  by
  \begin{equation*}
\frac{t\e^{xt}}{\e^t-1}= \sum_{n=0}^\infty B_n(x)\frac{t^n}{n!}.
  \end{equation*}
They verify $B_{n}(x+1)-B_{n}(x)=nx^{n-1}$, hence
\begin{equation*}
\sum_{0\leq k<n}k^m=\frac{B_{m+1}(n)-B_{m+1}(0)}{m+1}\,.
\end{equation*}
Also 
  \begin{equation*}
    B_{m+1}(x)=\sum_{k=0}^{m+1}\binom{m+1}{k}B_kx^{m+1-k}\,,
  \end{equation*}
and  $B_0(x) = 1$, $B_1(x) = x -\frac12$, $B_n = B_n(0)$,
  $B_{2p}=(-1)^{p-1}|B_{2p}|$ ($p\geq1$),
    $B_{2p+1}=0$ ($p\geq1$). That $B_n$ is used both for the polynomial and
    its value at zero should not cause confusion.
\begin{rema}
A variant of~\eqref{eq:ember2} is:
  \begin{equation}
    e_m = \sum_{l=0}^\infty (l+1)\frac{B_{m+1}(2^{l+1})-2^{m+1}B_{m+1}(2^l) 
                           + (2^{m+1}-1)B_{m+1}(0)}
                          {2^{(l+1)(m+1)}}
  \end{equation}
  One only needs to write the numerator as $(l+1)(v_{l+1}-2^{m+1}v_l)$ with
  $v_l=B_{m+1}(2^l)-B_{m+1}(0)-2^{(m+1)l}$ and rearrange the sum.  Details are
  left to the reader.  In terms of the Euler polynomials (see
  Equation~\eqref{eq:eulerber}), this is:
  \begin{equation*}
       \frac{m+1}2\sum_{l=0}^\infty (l+1)\frac{E_m(2^{l+1})-E_m(0)}
                          {2^{(l+1)(m+1)}}\,.
  \end{equation*}
\end{rema}

\section{Approximations to order \texorpdfstring{$O(1/m)$}{O(1/m)}}

Let us make the general definition, for $n$ a positive integer:
\begin{equation}\label{eq:pm}
  p_m(n) =  n^{-1}\sum_{0\leq k <n} (n^{-1}k)^m\,.
\end{equation}
They are the moments of the discrete probability measure $\sum_{0\leq k <
  n}n^{-1}\delta_{k/n}$.  It is not needed here for $m$ to be an integer, and
in what follows we let $e_m$ for $m$ real positive be given by Equation
\eqref{eq:emformula}, i.e.:
\begin{equation}\label{eq:empm}
  e_m = \sum_{l=0}^\infty \bigl(1 - (m+1)p_m(2^l)\bigr)\,,
\end{equation}
and we study the asymptotic for real positive $m$ going to
infinity.

There holds
\begin{equation}
  \label{eq:pmber}
  (m+1)p_m(n) = \frac{B_{m+1}(n)-B_{m+1}(0)}{n^{m+1}}\,.
\end{equation}

We  will use the same inequalities as in \cite[Prop.\@ 1]{burnolosc}:
\begin{align}
\notag
    0\leq x &\implies 0\leq \e^{-m(1-x)}- x^m\,,
\\
\label{eq:foo}
    \frac12\leq x\leq 1&\implies 0\leq \e^{-m(1-x)}- x^m\leq m(1-x)^2\e^{-m(1-x)}\,.
\end{align}
Inequality~\eqref{eq:foo}, with $x=1 - n^{-1}k$, $1\leq k \leq \frac n2$, is
found in a similar context also in \cite[answer to 6.3-34]{knuth}.  It
suggests approximating $p_m(n)$ by
\begin{equation*}
  n^{-1}\sum_{0\leq k <n} \e^{-m(1 - n^{-1}k)} = \frac{1 - \e^{-m}}{n(\e^{n^{-1}m}-1)}\,,
\end{equation*}
which we rewrite as $(1 - \e^{-m})q_m(n)$, defining for this purpose:
\begin{equation}\label{eq:qm}
  q_m(n) = \frac{1}{n(\e^{n^{-1}m}-1)}\,.
\end{equation}
But the approximation of $p_m(n)$ by $q_m(n)$ becomes poor
for $n$ large, as:
\begin{equation*}
  \lim_{n\to\infty} p_m(n) = \int_0^1 x^m\dx = (m+1)^{-1}, \text{ while }
  \lim_{n\to\infty} q_m(n) = m^{-1}.
\end{equation*}
Thus, we should either switch to $q_{m+1}(n)$ (we shall see later that it
arises very naturally from Equation~\eqref{eq:pmber}), or approximate
$(m+1)p_m(n)$ via $mq_m(n)$.

We start with a Lemma which uses Equation~\eqref{eq:foo} to compare $p_m(n)$
with $(1-\e^{-m})q_m(n)$ uniformly for the entire $n$ range.
\begin{lem}\label{lem:bound1}
There is a positive constant
$K_1<3.1$ such that for every real positive $m$ and every positive integer
$n$, there holds:
\begin{equation}\label{eq:K1}
  0 \leq (1-\e^{-m})q_m(n) - p_m(n) \leq \frac{K_1}{m^2}\,.
\end{equation}
\end{lem}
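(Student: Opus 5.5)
Write the difference as a sum over $k$:
\[
(1-\e^{-m})q_m(n) - p_m(n) = n^{-1}\sum_{0\leq k<n}\Bigl(\e^{-m(1-k/n)} - (k/n)^m\Bigr).
\]
By the first inequality preceding \eqref{eq:foo} every term is non-negative, which gives the lower bound at once. For the upper bound we substitute $j=n-k$, so that $1\leq j\leq n$ and $x=1-j/n$. We then split the range of $j$ into two parts.

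The first part is $1\leq j\leq n/2$. Here $x\geq\frac12$, so \eqref{eq:foo} applies and bounds each term by $m(j/n)^2\e^{-mj/n}$. The contribution is therefore at most
\[
\frac{m}{n^3}\sum_{j\geq1} j^2\e^{-mj/n}.
\]
Put $h=1/n$ and $f(u)=u^2\e^{-mu}$. Then this contribution equals $m\,h\sum_{j\geq1} f(jh)$. The function $f$ is unimodal: it increases up to $u=2/m$ and decreases afterwards. For a unimodal function, a Riemann sum with step $h$ is bounded by $\int_0^\infty f + h\max f$. We have $\int_0^\infty f = 2/m^3$ and $\max f = 4\e^{-2}/m^2$. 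Hence the first part is at most
\[
\frac{2}{m^2} + \frac{4\e^{-2}}{mn}.
\]
The second term is only $O(1/(mn))$, not $O(m^{-2})$, so this case needs more care. When $n\geq m$ we get $1/(mn)\leq 1/m^2$, which is fine. When $n<m$, I would bound the sum more crudely. All $j\geq1$ satisfy $mj/n>1$, so $f(jh)$ lies in the range where $f$ is decreasing once $jh\geq 2/m$, and the sum is dominated by its first few terms. I would handle this directly using $\max_j j^2\e^{-mj/n}\cdot\#$. Alternatively, one can note that with $t=m/n$,
\[
\frac{m}{n^3}\sum_j j^2\e^{-tj} = \frac{t^3}{m^2}\sum_{j\geq1} j^2\e^{-tj}.
\]
Then $t^3\sum_j j^2\e^{-tj}$ is a bounded function of $t>0$: it tends to $2$ as $t\to0$ and to $0$ as $t\to\infty$. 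Its supremum is some explicit constant slightly above $2$. I would rather use this cleaner formulation for all $n$. The sum equals $\e^{-t}(1+\e^{-t})/(1-\e^{-t})^3$, so the first-part bound is $g(t)/m^2$ with
\[
g(t)=\frac{t^3\e^{-t}(1+\e^{-t})}{(1-\e^{-t})^3},
\]
and I need $\sup g$ numerically. I expect it to be about $2$, since $g(t)\to2$ as $t\to0$ and $g$ is probably decreasing.

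The second part is $n/2<j\leq n$, where $x<\frac12$. Here I drop $-x^m$ and bound each term by $\e^{-m(1-x)}=\e^{-mj/n}\leq \e^{-m/2}$. The contribution is at most
\[
n^{-1}\sum_{j>n/2}\e^{-mj/n}\leq \e^{-m/2}.
\]
I need this to be $\leq c/m^2$. Since $m^2\e^{-m/2}\leq 16\e^{-2}\approx 2.17$, this is not small enough on its own when simply added to the bound of about $2$ from the first part. So I would refine it: sum the geometric series instead of counting terms, using
\[
n^{-1}\sum_{j> n/2}\e^{-mj/n}\leq \e^{-m/2}\,\frac{1}{n(1-\e^{-m/n})}.
\]
The combination of the two parts must then be estimated jointly in terms of $t=m/n$ and $m$.

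The main obstacle is getting the explicit constant below $3.1$. The qualitative $O(m^{-2})$ bound is straightforward, but the constant requires balancing the two regimes. My plan is to treat the total bound as a function of $t$ and $m$ and to check its supremum: analytically in the limiting regimes, and by an elementary monotonicity argument in between. If the crude split gives too large a constant, I would move the split point or use $x^m\geq0$ only where $x\leq$ some smaller threshold.
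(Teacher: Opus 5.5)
You have the same lower bound, the same split at $j=n/2$ (the paper's $S_2$ and $S_1$), and the same bound on the range $x\geq\frac12$ as the paper, including the closed form $g(t)=t^3\e^{-t}(1+\e^{-t})/(1-\e^{-t})^3$. However, you only conjecture that $\sup g=2$, and elsewhere you say the supremum is ``slightly above $2$'', which is false. It is quick to prove. Write $g(t)=2u^3\cosh u/\sinh^3u$ with $u=t/2$. Then $g'<0$ is equivalent to $\cosh t+2>3\sinh t/t$. This holds termwise in the Taylor series, because $\frac{1}{(2k)!}-\frac{3}{(2k+1)!}=\frac{2k-2}{(2k+1)!}$, which is $>0$ for $k\geq2$. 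So $g$ decreases from its limit $2$ at $0^+$, and $g<2$. The paper also only asserts this monotonicity, with a pointer to Poisson summation.

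The genuine gap is the range $x<\frac12$. This is exactly where the constant $3.1$ is decided, and your proposal leaves it as an unexecuted plan. Your bounds there give the lemma with a constant of about $2+16\e^{-2}\approx4.17$. No joint estimation of them can reach $3.1$. With $t=m/n$, your refined bound $\e^{-m/2}/\bigl(n(1-\e^{-m/n})\bigr)$ equals $\frac{\e^{-m/2}}{m}\cdot\frac{t}{1-\e^{-t}}$, which is not uniform in $n$. For $n=2$, $m=3$ your total bound is $g(3/2)+9\e^{-3/2}/\bigl(2(1-\e^{-3/2})\bigr)\approx1.96+1.29=3.26>3.1$; for $n=1$, $m=4$ it is about $3.47$.

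The loss comes from replacing the first term $\e^{-m(\lfloor n/2\rfloor+1)/n}$ of the geometric series by $\e^{-m/2}$. This discards a factor $\e^{-m/n}$ for $n$ even, or $\e^{-m/(2n)}$ for $n$ odd. If you keep that factor, the sum is at most $\e^{-m/2}/\bigl(n(\e^{m/n}-1)\bigr)$, respectively $\e^{-m/2}/\bigl(2n\sinh(m/(2n))\bigr)$. Both are $\leq\e^{-m/2}/m$. So this range contributes at most $m\e^{-m/2}/m^2\leq(2/\e)/m^2$ uniformly in $n$, and $K_1=2+2/\e<2.74$ works.

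The paper gets the same uniformity a different way. It compares the left Riemann sum of the increasing function $x\mapsto\e^{-m(1-x)}$ on $[0,\frac12]$ with $\int_0^{1/2}\e^{-m(1-x)}\,dx=m^{-1}(\e^{-m/2}-\e^{-m})$, at most doubled for odd $n$. After multiplying by $m^2$ this gives $2m(\e^{-m/2}-\e^{-m})\leq1.042$, hence $K_1\approx3.05$.
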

\begin{proof}
We write:
\begin{equation*}
  (1-\e^{-m})q_m(n) - p_m(n)
  \begin{aligned}[t]
    &= \Bigl(n^{-1}\sum_{0\leq k < n/2} + n^{-1}\sum_{n/2\leq k <n}\Bigr)
    \bigl(\e^{-m(1-n^{-1}k)} - (n^{-1}k)^m\bigr)\\
    &= S_1 + S_2\,.
  \end{aligned}
\end{equation*}
We bound $S_1$ (trivially) for $n$ even by $\int_0^{\frac12}\e^{-m(1-x)}\dx
=m^{-1}(\e^{-m/2}- \e^{-m})$ and for $n$ odd by
$\int_0^{\frac12-(2n)^{-1}}\e^{-m(1-x)}\dx +
2\int_{\frac12-(2n)^{-1}}^{\frac12}\e^{-m(1-x)}\dx =
m^{-1}\bigl(\e^{-m/2}-\e^{-m} + \e^{-m/2}(1 - \e^{-m/(2n)})\bigr)$.  This is
maximal for $n=1$ so we adopt $2m^{-1}(\e^{-m/2}-\e^{-m})$ as general upper
bound valid for all positive integers $n$ and positive real numbers
$m$. Regarding $S_2$, we use the inequality~\eqref{eq:foo}, and after the
change $k\to n-k$, and then extending the range of $k$ to infinity and letting
$\tau=n^{-1}m>0$, we get
\begin{equation*}
  S_2 \leq \sum_{1\leq k\leq n/2} mn^{-3}k^2\e^{-n^{-1}mk} 
     \leq m^{-2}\sum_{k=1}^\infty\tau^3 k^2\e^{-\tau k}
    =m^{-2}\tau^3\frac{\e^{-\tau}+\e^{-2\tau}}{(1-\e^{-\tau})^3}\,.
\end{equation*}
It turns out that this function of $\tau>0$ is nicely decreasing with its
supremum attained for $\tau\to0^+$ (this can be proven using the Poisson
summation formula).  So $m^2S_2<2$.
Adding the bound for $S_1$ we obtain 
\eqref{eq:K1} with
\begin{equation}
  K = 2 + \sup_{x>0}2x(\e^{-x/2}-\e^{-x}).
\end{equation}
The supremum is attained at $x_0\approx\np{2.89115}$ (and values decrease for
$x>x_0$) and is
$\approx\np{1.04138}$.  So we can take
$K=\np{3.05}$. Restricting to $m\geq m_0\geq 3$, we get~\eqref{eq:K1} with
$K=2+2m_0(\e^{-m_0/2}-\e^{-m_0})$, and only $2+m_0(\e^{-m_0/2}-\e^{-m_0})$ for
even $n$'s.
\end{proof}
As a corollary, if we let both $m$ and $n$ go to infinity such that their ratio
converges to some value $q$, we get (with $1$ on the right if $q=0$):
\begin{equation*}
  \begin{split}
    \lim_{m,n\to\infty, mn^{-1}\to q} \frac{B_{m+1}(n)-B_{m+1}(0)}{n^{m+1}} =
    \lim \,(m+1)p_m(n) \\= \lim\, mq_m(n)
= \lim \frac{n^{-1}m}{\e^{n^{-1}m}-1} =
    \frac q{\e^q-1}\,.
  \end{split}
\end{equation*}
This is indeed what one would
expect from expressing the Bernoulli polynomials in terms of Bernoulli numbers
and taking the limit term-wise, but Lemma~\ref{lem:bound1} makes it easy (and
gives an $O(m^{-1})$ error estimate).  Reference \cite{lopeztemme1999}, which
uses the saddle-point method in the complex domain, provides full asymptotic
for, among others, $B_n(nz+\frac12)$ as $n\to\infty$.

\begin{lem}\label{lem:bound2}
There is a constant $K_2$ such that for every positive $m$:
\begin{equation*}
0\leq  \sum_{l=0}^{\lfloor \log_2(m+1)\rfloor}
  \Bigl((1-\e^{-m})q_m(2^l) - p_m(2^l)\Bigr) \leq \frac{K_2}{(m+1)^2}\,.
\end{equation*}
\end{lem}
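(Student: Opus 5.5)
The plan is to sum, over the dyadic values $n=2^l\le m+1$, the two bounds $S_1,S_2$ already obtained in the proof of Lemma~\ref{lem:bound1}, instead of using the crude uniform bound $K_1/m^2$ from that Lemma for each term. Summing that uniform bound would cost a factor $\log_2(m+1)$. The point is that for $n\le m+1$ the parameter $\tau=m/n$ is bounded below, so the $S_2$ bound becomes exponentially small in $\tau$, and the dyadic sum over $\tau_l=m2^{-l}$ then converges to a constant. Nonnegativity of each summand is already given by Lemma~\ref{lem:bound1}, so only the upper bound needs proof.

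First I dispose of small $m$. For $0<m<1$ the only index is $l=0$, i.e.\ $n=1$. Then $p_m(1)=0^m=0$ and $(1-\e^{-m})q_m(1)=\e^{-m}\le1\le 4/(m+1)^2$. More generally, for $m$ in any bounded range, Lemma~\ref{lem:bound1} together with the bounded number of terms gives a bound of the form $C/(m+1)^2$. So I may assume $m\ge1$, say. For $n=2^l\le m+1$ this gives $\tau=m/n\ge m/(m+1)\ge\frac12$.

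Next, from the proof of Lemma~\ref{lem:bound1}, for each $n$ the difference is at most $S_1+S_2$, with
\begin{equation*}
S_1\le 2m^{-1}(\e^{-m/2}-\e^{-m}),\qquad
S_2\le m^{-2}\,\tau^3\frac{\e^{-\tau}+\e^{-2\tau}}{(1-\e^{-\tau})^3}.
\end{equation*}
For the $S_1$ contributions, there are at most $1+\log_2(m+1)$ of them. Their total is therefore $O\bigl(m^{-1}\log(m+1)\,\e^{-m/2}\bigr)$, which is certainly $O\bigl((m+1)^{-2}\bigr)$.

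For the $S_2$ contributions, since $\tau\ge\frac12$ the factor $(1-\e^{-\tau})^{-3}$ is bounded by the constant $(1-\e^{-1/2})^{-3}$. Hence $S_2\le C\,m^{-2}\tau_l^3\e^{-\tau_l}$ with $\tau_l=m2^{-l}$. It remains to bound $\sum_{l\ge0,\ \tau_l\ge1/2}\tau_l^3\e^{-\tau_l}$ uniformly in $m$. The $\tau_l$ form a geometric progression of ratio $\frac12$, and $x\mapsto x^3\e^{-x}$ is bounded by $27\e^{-3}$ and decays exponentially for large $x$. Consequently at most boundedly many $\tau_l$ lie in each interval $[2^j,2^{j+1})$, and the sum is dominated by $\sum_{j\ge-1}\sup_{[2^j,2^{j+1}]}x^3\e^{-x}$, a convergent absolute constant. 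This yields a total $S_2$ contribution $\le C'/m^2\le 4C'/(m+1)^2$ for $m\ge1$. Adding the three pieces (small $m$, $S_1$, $S_2$) gives $K_2$.

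The only step needing any care is the uniform boundedness of the dyadic sum $\sum_l\tau_l^3\e^{-\tau_l}$. Everything else is bookkeeping from Lemma~\ref{lem:bound1}. I do not aim for an explicit numerical value of $K_2$; if one were wanted, the sum would be bounded by comparing it with $\int_0^\infty x^2\e^{-x}\,\dx/\log 2$ plus the maximum term.
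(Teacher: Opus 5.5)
Your proposal is correct and follows essentially the same route as the paper: you sum the $S_1$ and $S_2$ bounds from the proof of Lemma~\ref{lem:bound1} over $2^l\le m+1$, use $\tau=m2^{-l}\ge\frac12$ to get $S_2\le C m^{-2}\tau^3\e^{-\tau}$, and bound the sum over the geometric progression of the $\tau_l$ uniformly in $m$. The differences are cosmetic: the paper isolates the $l=0$ term as exactly $\e^{-m}\le 4\e^{-1}(m+1)^{-2}$ and uses the sharper even-$n$ bound $m^{-1}\e^{-m/2}$ for $S_1$ when $l\ge1$, whereas you treat $0<m<1$ separately and use the uniform-in-$n$ bound $2m^{-1}(\e^{-m/2}-\e^{-m})$ for every $l$.
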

\begin{proof}
  Let $L_m = \lfloor \log_2(m+1)\rfloor$.  The $l=0$ term on the left-hand
  side of Equation~\eqref{eq:K1} is, for $m>0$, $(1-\e^{-m})q_m(1)=(1-\e^{-m})
  (e^m-1)^{-1}=\e^{-m}\leq 4\e^{-1}(m+1)^{-2}$.

  We assume now $m\geq1$, so that $L_m\geq 1$, and we need only consider
  $1\leq l \leq L_m$.  Using the same notation as in the proof of
  Lemma~\ref{lem:bound1}, and Equation~\eqref{eq:foo}, we have, for any even
  positive integer $n$:
\begin{equation*}
  0\leq (1-\e^{-m})q_m(n) - p_m(n)\leq
    m^{-1}\e^{-m/2} 
  + m^{-2}\sum_{k=1}^\infty\tau^3 k^2\e^{-\tau k}\,,
\end{equation*}
with $\tau = n^{-1}m$.

Summing for $2\leq 2^l\leq m+1$, the first term in the right-hand side
contributes $L_m m^{-1}\e^{-m/2}$ which is a fortiori $O((m+1)^{-2})$.  As per
the second summands, as $\tau\geq m/(m+1)\geq \frac12$, there holds
$\e^{-\tau}\leq \e^{-\frac12}$ and $\sum_{k\geq1}k^2\e^{-\tau k}\leq
\e^{-\tau}\sum_{k\geq1}k^2\e^{-(k-1)/2}$.  Hence, the second summand is, for
each of $\tau=\frac {m}{2}$,  $\frac{m}{4}$, \dots, $2^{-L_m}\geq \frac12$, bounded by $C
m^{-2}\tau^3\e^{-\tau}$, for some constant $C$.  We use the weaker upper bound
$m^{-2}O(\e^{-\tau/2})$ and obtain in total
$m^{-2}O(\e^{-1/4}+\e^{-1/2} + \dots + \e^{-2^{L_m}/8}) = O(m^{-2})$.
\end{proof}
Let us set from here on,  as in the previous proof:
\begin{equation*}
L_m = \lfloor \log_2(m+1)\rfloor\,.
\end{equation*}
We also need an upper bound for
$
  m\sum_{l=0}^{L_m}q_m(2^l) = \sum_{l=0}^{L_m} \frac{m}{2^l (\e^{2^{-l}m}-1)}
$.
Not aiming at optimal results, we use the trivial bound valid for positive
$x$:
\begin{equation*}
  \frac{x}{\e^x-1} = \frac{x\e^{-x/2}}{2\sinh \frac{x}{2}}<\e^{-x/2}.
\end{equation*}
Hence, as in the proof of Lemma~\ref{lem:bound2}, we get, supposing $m\geq1$,
hence $2^{L_m}\leq 2m$:
\begin{align*}
  m \sum_{l=0}^{L_m}q_m(2^l)&\leq 
\e^{-\frac m2} + \e^{-\frac m4} + \dots + \e^{-2^{-L_m}\frac m2}
\\
&\leq 
\e^{-2^{L_m}/4} + \e^{-2^{L_m}/8} + \dots + \e^{-1/4} = O(1).
\end{align*}
For $0<m<1$, there is only one term $\frac{m}{\e^m-1} = O(1)$.  Hence:
\begin{lem}\label{lem:bound3}
  There is a constant $K_3$ such that for every positive $m$:
\begin{equation*}
  0\leq \sum_{l=0}^{L_m} q_m(2^l) \leq \frac{K_3}{m}\,.
\end{equation*}
\end{lem}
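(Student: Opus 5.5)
The argument is already essentially laid out in the paragraph preceding the statement; the plan is to make it airtight and to treat the small-$m$ range separately. We use the elementary inequality $\frac{x}{\e^x-1}<\e^{-x/2}$ for $x>0$, which follows from $\frac{x}{\e^x-1}=\frac{x\e^{-x/2}}{2\sinh(x/2)}$ together with $\sinh(y)>y$. Applied with $x=2^{-l}m$, it gives for each $l$
\begin{equation*}
0< m\,q_m(2^l)=\frac{2^{-l}m}{\e^{2^{-l}m}-1}<\e^{-2^{-l}m/2}.
\end{equation*}
The lower bound $0\le\sum_{l=0}^{L_m}q_m(2^l)$ is immediate, since every term is positive. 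It then suffices to show that $\sum_{l=0}^{L_m}\e^{-2^{-l}m/2}$ is bounded by an absolute constant; dividing by $m$ then gives the claim.

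\emph{Case $0<m<1$.} Here $L_m=\lfloor\log_2(m+1)\rfloor=0$, so the sum has the single term $q_m(1)=\frac{1}{\e^m-1}<\frac1m$. This gives the bound with constant $1$.

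\emph{Case $m\ge1$.} By definition of $L_m$ we have $2^{L_m}\le m+1\le 2m$, hence $2^{-l}m\ge 2^{L_m-l-1}$ for $0\le l\le L_m$. Substituting $j=L_m-l$, which runs over $0,\dots,L_m$, gives
\begin{equation*}
\sum_{l=0}^{L_m}\e^{-2^{-l}m/2}\le\sum_{j=0}^{L_m}\e^{-2^{j}/4}\le\sum_{j=0}^{\infty}\e^{-2^{j}/4}.
\end{equation*}
The last series converges, and faster than geometrically, because $2^j\ge j+1$. Its value is an absolute constant $C$. We may therefore take $K_3=\max(1,C)$.

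The only point needing care is the reindexing $j=L_m-l$. It shows that the sum is dominated by the terms with $l$ near $L_m$, where $2^{-l}m$ is of order $1$. Each of those terms is $O(1)$ and the remaining ones decay doubly exponentially, so the bound does not grow with $L_m\approx\log_2 m$. No genuine obstacle remains.
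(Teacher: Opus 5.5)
Your proposal is correct and follows essentially the same route as the paper: the bound $\frac{x}{\e^x-1}<\e^{-x/2}$, the single-term case $0<m<1$, and, for $m\geq1$, the estimate $2^{L_m}\leq 2m$, which turns the sum into one dominated by $\e^{-1/4}+\e^{-1/2}+\e^{-1}+\cdots$. The reindexing $j=L_m-l$ just writes out explicitly what the paper does by listing the terms in reverse order.
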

\begin{lem}\label{lem:bound4}
There is a constant $K_4$ such that for every positive $m$:
\begin{equation*}
  0\leq \sum_{l=0}^{\lfloor \log_2(m+1)\rfloor}
  \Bigl(q_m(2^l) - p_m(2^l)\Bigr) \leq \frac{K_4}{m^2}\,.
\end{equation*}
\end{lem}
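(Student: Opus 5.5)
Write $q_m(n)-p_m(n) = \bigl((1-\e^{-m})q_m(n)-p_m(n)\bigr) + \e^{-m}q_m(n)$ and sum over $0\leq l\leq L_m$. The first group of terms is handled directly by Lemma~\ref{lem:bound2}. It is nonnegative and bounded by $K_2(m+1)^{-2}$. The second group is $\e^{-m}\sum_{l=0}^{L_m}q_m(2^l)$, which is nonnegative since each $q_m(n)>0$. Hence the lower bound $0\leq\sum(q_m-p_m)$ is immediate.

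For the upper bound on the second group, I use Lemma~\ref{lem:bound3}:
\begin{equation*}
0\leq \e^{-m}\sum_{l=0}^{L_m}q_m(2^l)\leq \frac{K_3\e^{-m}}{m}\,.
\end{equation*}
It remains to see that $K_3 m^{-1}\e^{-m}\leq C m^{-2}$ for all $m>0$. This holds because $m\e^{-m}\leq \e^{-1}$ for every positive real $m$, so $K_3m^{-1}\e^{-m}\leq K_3\e^{-1}m^{-2}$.

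For the first group, I only need to convert $(m+1)^{-2}$ into $m^{-2}$, which is trivial since $(m+1)^{-2}\leq m^{-2}$. Combining the two bounds gives the claim with $K_4=K_2+K_3\e^{-1}$.

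There is no real obstacle here, as the statement is a direct combination of Lemmas~\ref{lem:bound2} and~\ref{lem:bound3}. The only point needing care is the small-$m$ regime, where $L_m=0$ and the bound $K_3/m$ of Lemma~\ref{lem:bound3} blows up. This is harmless because the target bound $K_4/m^2$ blows up faster, and the inequality $m\e^{-m}\leq\e^{-1}$ is uniform in $m>0$. Alternatively, for $0<m<1$ one can check directly that the single $l=0$ term $q_m(1)-p_m(1)=(\e^m-1)^{-1}-0$ is at most $m^{-1}\leq m^{-2}$.
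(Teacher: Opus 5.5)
Your argument is correct and is the paper's proof: the splitting $q_m-p_m=\bigl((1-\e^{-m})q_m-p_m\bigr)+\e^{-m}q_m$, Lemmas~\ref{lem:bound2} and~\ref{lem:bound3}, and the inequality $m\e^{-m}\leq\e^{-1}$ give the same constant $K_4=K_2+\e^{-1}K_3$. You also spell out the lower bound, which follows from the nonnegativity in Lemma~\ref{lem:bound2} together with $q_m>0$; the paper leaves this implicit.
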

\begin{proof}
From Lemmas~\ref{lem:bound2} and~\ref{lem:bound3} the sum is bounded above by 
$K_2(m+1)^{-2}+\e^{-m}K_3 m^{-1}$, so, multiplying by $m^2$ it is bounded
by $K_2 + \e^{-1}K_3$.
\end{proof}

We now obtain:
\begin{lem}\label{lem:bound5}
  There is a constant $K_5$ such that for every positive $m$:
\begin{equation*}
  \sum_{l=0}^{L_m} \Bigr|mq_m(2^l) - (m+1)p_m(2^l)\Bigl|
\leq \frac{K_5}{m+1}
\end{equation*}
\end{lem}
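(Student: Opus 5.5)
Write each summand as a difference plus a remainder:
\begin{equation*}
mq_m(n) - (m+1)p_m(n) = (m+1)\bigl(q_m(n)-p_m(n)\bigr) - q_m(n),
\end{equation*}
with $n=2^l$. Summing the absolute values over $0\leq l\leq L_m$ and using the triangle inequality, the left-hand side of the Lemma is then at most
\begin{equation*}
(m+1)\sum_{l=0}^{L_m}\bigl|q_m(2^l)-p_m(2^l)\bigr| + \sum_{l=0}^{L_m} q_m(2^l).
\end{equation*}
It therefore suffices to bound each of these two sums by a constant times $(m+1)^{-1}$.

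\textbf{First sum.} To apply Lemma~\ref{lem:bound4} I need $q_m(2^l)-p_m(2^l)\geq 0$ termwise. This follows from Lemma~\ref{lem:bound1}, which gives $p_m(n)\leq(1-\e^{-m})q_m(n)\leq q_m(n)$ because $q_m(n)>0$. So the absolute values can be dropped. Lemma~\ref{lem:bound4} then bounds the first sum by $(m+1)K_4m^{-2}$.

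\textbf{Second sum.} Lemma~\ref{lem:bound3} bounds it by $K_3m^{-1}$.

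\textbf{Combining for $m\geq1$.} Here $m^{-1}\leq 2(m+1)^{-1}$ and $(m+1)m^{-2}\leq 4(m+1)^{-1}$, so the total is at most $(4K_4+2K_3)(m+1)^{-1}$.

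\textbf{Small $m$ (the main obstacle).} For $0<m<1$ the bounds above blow up like $m^{-1}$, so this range must be treated directly. Here $L_m=0$ and only the $l=0$ term remains. Since $p_m(1)=0^m=0$, that term is
\begin{equation*}
mq_m(1)=\frac{m}{\e^m-1}\leq 1\leq \frac{2}{m+1}.
\end{equation*}

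Taking $K_5=\max(4K_4+2K_3,\,2)$ proves the Lemma.
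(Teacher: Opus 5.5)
Your proof is correct and is essentially the paper's argument. Both use a two-piece decomposition bounded via the earlier lemmas for $m\geq1$, and both handle $0<m<1$ directly through the single term $m/(\e^m-1)\leq 1$. The only difference is cosmetic: you write $(m+1)(q_m-p_m)-q_m$ and invoke Lemma~\ref{lem:bound4}, with Lemma~\ref{lem:bound1} supplying termwise nonnegativity. The paper instead writes $(m+1)\bigl((1-\e^{-m})q_m-p_m\bigr)-\bigl(1-(m+1)\e^{-m}\bigr)q_m$ and invokes Lemma~\ref{lem:bound2} directly.
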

\begin{proof}
For $0<m<1$, we have only one term $mq_m(1)-(m+1)p_m(1)$ with
$mq_m(1) = m(e^m -1)^{-1}\leq 1$, and $p_m(1) = 0$, so it is bounded by $1$
hence by $K(m+1)^{-1}$ for any  $K\geq2$.
For $m\geq1$, we first decompose
\begin{align*}
  mq_m(2^l) - (m+1)p_m(2^l) =
\begin{aligned}[t]
  &(m+1)\Bigl((1-e^{-m})q_m(2^l) - p_m(2^l)\Bigr) \\
  &-\bigl(1 - (m+1)\e^{-m}\bigr) q_m(2^l)
\end{aligned},
\end{align*}
hence, 
\begin{equation*}
  \Bigl|mq_m(2^l) - (m+1)p_m(2^l)\Bigr|
\leq (m+1)\Bigl((1-e^{-m})q_m(2^l) - p_m(2^l)\Bigr) + q_m(2^l)\,.
\end{equation*}
Using Lemmas~\ref{lem:bound2} and~\ref{lem:bound3} we get, recalling $m\geq1$,
\begin{equation*}
  \sum_{l=0}^{L_m} \Bigr|mq_m(2^l) - (m+1)p_m(2^l)\Bigl|
\leq \frac{K_2}{m+1} + \frac{K_3}{m} \leq \frac{K_2 + 2 K_3}{m+1}\,,
\end{equation*}
and the statement to be proven holds with $K_5 = \max(2, K_2 + 2K_3)$.
\end{proof}
We now need to control the $2^l >m+1$ range.  This is done via a completely
different approach to the quantities $p_m(n)$ and $q_m(n)$ (or $q_{m+1}(n)$).
\begin{lem}\label{lem:bound6}
  There are constants $K_6$ and $K_7$ such that
  \begin{align}\label{eq:K6}
  \sum_{l>L_m}\Bigr|mq_m(2^l) - (m+1)p_m(2^l)\Bigl|
&\leq \frac{K_6}{m+1}
\\\label{eq:K7}
  \sum_{l>L_m}\Bigr|(m+1)q_{m+1}(2^l) - (m+1)p_m(2^l)\Bigl|
&\leq \frac{K_7}{m+1}
  \end{align}
\end{lem}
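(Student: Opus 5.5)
Both estimates concern the range $n=2^l>m+1$, i.e.\ $\tau=m/n<1$. There the mass of $p_m(n)$ is spread over many points and the Euler--Maclaurin picture is the natural one. I would work with the exact formula~\eqref{eq:pmber},
\begin{equation*}
(m+1)p_m(n)=\frac{B_{m+1}(n)-B_{m+1}(0)}{n^{m+1}},
\end{equation*}
together with the generating function of the Bernoulli polynomials. Writing $\sum_{0\leq k<n}\e^{kx}=(\e^{nx}-1)/(\e^x-1)$, the quantity $(m+1)p_m(n)$ is $(m+1)!/n^{m+1}$ times the coefficient of $x^{m}$ in $(\e^{nx}-1)/(\e^x-1)$. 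Substituting $x=y/n$ gives $(m+1)p_m(n)=(m+1)!\,[y^{m}]\,(\e^{y}-1)\,\frac{1}{n(\e^{y/n}-1)}$. The same manipulation applied to $q_{m+1}(n)$, which is an exact exponential sum in $k$, produces the same shape with $\frac{y/n}{\e^{y/n}-1}$ replaced by a truncation.

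The key expansion is $\frac{y/n}{\e^{y/n}-1}=\sum_j B_j (y/n)^j/j!$. This gives
\begin{equation*}
(m+1)p_m(n)=\sum_{j\geq0}\binom{m+1}{j}B_j\,n^{-j},
\end{equation*}
which is the finite sum from~\eqref{eq:pmber}, since the $j=0$ term is $1$ and $n^{-j}$ appears. Similarly $(m+1)q_{m+1}(n)=\tau'/(\e^{\tau'}-1)$ with $\tau'=(m+1)/n$, whose Taylor series is $\sum_j B_j\tau'^j/j!$. Comparing term by term, the $j=0$ and $j=1$ terms agree, namely $1$ and $-\tfrac12(m+1)/n$. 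The difference is therefore
\begin{equation*}
\sum_{j\geq2}\frac{B_j}{j!}\,n^{-j}\bigl((m+1)^{\underline{j}}-(m+1)^j\bigr).
\end{equation*}
Since $\bigl|(m+1)^j-(m+1)^{\underline j}\bigr|\leq \binom j2 (m+1)^{j-1}$, each term is bounded by $\frac{|B_j|}{j!}\binom j2 \tau'^{\,j}/(m+1)$.

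Now $|B_j|/j!\leq 4(2\pi)^{-j}$ and $\tau'\leq 1<2\pi$, so the series $\sum_{j\geq 2}$ converges with sum $O(\tau'^2)$. This gives
\begin{equation*}
\bigl|(m+1)q_{m+1}(2^l)-(m+1)p_m(2^l)\bigr|\leq C\,\frac{\tau'^2}{m+1},\qquad \tau'=\frac{m+1}{2^l}.
\end{equation*}
Summing over $l>L_m$, the values $\tau'$ are at most $1$ and form a geometric sequence of ratio $\tfrac12$, so $\sum\tau'^2\leq 4/3$. This yields~\eqref{eq:K7}.

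For~\eqref{eq:K6}, I would write
\begin{equation*}
mq_m(n)-(m+1)p_m(n)=\bigl(mq_m(n)-(m+1)q_{m+1}(n)\bigr)+\bigl((m+1)q_{m+1}(n)-(m+1)p_m(n)\bigr),
\end{equation*}
with $mq_m(n)=f(\tau)$, where $f(x)=x/(\e^x-1)$ and $\tau=m/n$. By the mean value theorem, $|f(\tau)-f(\tau')|\leq \sup|f'|\cdot n^{-1}$, and $f'$ is bounded. Summing $n^{-1}=2^{-l}$ over $l>L_m$ gives at most $2^{-L_m}\leq 2/(m+1)$, which completes~\eqref{eq:K6}.

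The main obstacle is justifying the termwise comparison rigorously. The sum for $p_m$ is finite, terminating at $j=m+1$, while the series for $q_{m+1}$ is infinite. One must check that the tail $j>m+1$ of the Taylor series for $f(\tau')$ is also $O(\tau'^2/(m+1))$. This follows from the same bound $|B_j|/j!\leq 4(2\pi)^{-j}$: for $j>m+1$ those terms are at most $4\tau'^{j}(2\pi)^{-j}$, geometrically small and certainly $O(\tau'^2(2\pi)^{-m}) $. Non-integer $m$ would need separate treatment, but I restrict to integers, which is what the paper uses for the Bernoulli identity.
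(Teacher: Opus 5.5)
Your proof is correct. For \eqref{eq:K7} it is essentially the paper's argument: you compare $\binom{m+1}{j}$ with $(m+1)^j/j!$ through the $\binom j2$ bound, get an $O(\tau'^2/(m+1))$ estimate from the radius of convergence $2\pi$, and sum geometrically over $2^l>m+1$.

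One small slip needs correcting. Coefficient extraction gives $(m+1)p_m(n)=\sum_{j=0}^{m}\binom{m+1}{j}B_jn^{-j}$, not $\sum_{j\geq0}$. The full sum equals $B_{m+1}(n)/n^{m+1}$ and so contains a spurious term $B_{m+1}n^{-m-1}$. With the correct range, the $j=m+1$ term of the difference is $\frac{B_{m+1}}{(m+1)!}\tau'^{m+1}$. This is at most $4(2\pi)^{-m-1}\tau'^2$, so the slip is harmless.

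Your ``main obstacle'' paragraph is also unnecessary. The inequality $|(m+1)^j-(m+1)^{\underline j}|\leq\binom j2(m+1)^{j-1}$ already covers every $j\geq m+2$: there the falling factorial vanishes and $\binom j2\geq m+1$. This is exactly the paper's device of passing to $k(k+1)/2$.

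For \eqref{eq:K6} you take a genuinely different route.

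\textbf{The paper's route.} It redoes the binomial comparison with $m^k$ in place of $(m+1)^k$, writing $\binom{m+1}{k}=\frac{m^k}{k!}\prod_{-1\leq p<k-1}(1-p/m)$ and bounding the distance of the product to $1$ by $m^{-1}\delta_k$. The $k=1$ terms no longer cancel, so each term carries an error $O(m^{-1}\cdot m/n)$. One then sums $m/n$ rather than $\tau'^2$ over $n=2^l>m+1$.

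\textbf{Your route.} You deduce \eqref{eq:K6} from \eqref{eq:K7} by the triangle inequality and a Lipschitz bound. With $f(x)=x/(\e^x-1)$ you have $|f(m/n)-f((m+1)/n)|\leq\sup|f'|\,n^{-1}$, where $\sup|f'|=\tfrac12$ by convexity of $f$. You then sum $\sum_{l>L_m}2^{-l}=2^{-L_m}<2/(m+1)$.

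\textbf{Comparison.} Your version is shorter and avoids the two-sided product estimate. It also shows exactly why \eqref{eq:K6} is termwise weaker: the $O(1/n)$ discrepancy comes entirely from shifting the argument of $f$ by $1/n$. This makes precise the paper's remark that \eqref{eq:K7} is the more natural estimate. The paper's version is instead a self-contained second application of a single technique and does not rely on \eqref{eq:K7}.
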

\begin{rema}
  In a certain way, Equation~\eqref{eq:K7} is the one arising
  more naturally from the proof.  But as we have already written up
  a proof for  Lemma~\ref{lem:bound5}, which is expressed with the
  $q_m(2^l)$'s, it is convenient to also prove Equation~\eqref{eq:K6}.
\end{rema}
\begin{proof}
  Let $n$ be some positive integer. From Equation~\eqref{eq:pmber}:
  \begin{equation*}
    (m+1)p_m(n) = \frac{B_{m+1}(n)-B_{m+1}(0)}{n^{m+1}} = \sum_{k=0}^m
    \binom{m+1}k B_kn^{-k}\,.
  \end{equation*}
We now approximate the binomial coefficients:
  \begin{align*}
    \binom{m+1}k &= \frac{1}{k!}(m+1)^k\prod_{0\leq p < k}(1 - (m+1)^{-1}p)
\\
\implies 0&\leq \frac{(m+1)^{k}}{k!}-\binom{m+1}k\leq \frac{k(k-1)}{2(m+1)}\frac{(m+1)^{k}}{k!}\,.
  \end{align*}
We used $\prod_{0\leq p<k}(1-\epsilon_p)\geq 1 - \sum_{0\leq p\leq
  k}\epsilon_p$ which is valid for $0\leq \epsilon_p\leq 1$, $0\leq p
<k$. Hence
\begin{equation*}
  \Bigl| \sum_{k=0}^m \frac{B_k}{k!}(m+1)^kn^{-k} - (m+1)p_m(n)\Bigr|
\leq (m+1)^{-1}\sum_{k=0}^m \frac{|B_k|}{k!}\frac{k(k-1)}2 \bigl((m+1)n^{-1}\bigr)^k
\end{equation*}
We assume from here on $n\geq m+1$, which is the case for the powers of $2$ we
will consider, $n=2^l$, $l>L_m$.  The finite sums in the previous inequality
are then partial sums of convergent series. We replace $k(k-1)$ by the larger
$k(k+1)$ which verifies $m+1\leq \frac{k(k+1)}{2}$ for $k\geq m+1$, $m>0$. This
allows to extend our termwise estimates to the $k>m$ range. Thus,
there holds:
\begin{equation*}
  \Bigl| \sum_{k=0}^\infty \frac{B_k}{k!}(m+1)^kn^{-k} - (m+1)p_m(n)\Bigr|
\leq (m+1)^{-1}\sum_{k=2}^\infty \frac{|B_k|}{k!}\frac{k(k+1)}2 \bigl((m+1)n^{-1}\bigr)^k.
\end{equation*}
We recognize in the left-hand side
\begin{equation*}
   \sum_{k=0}^\infty \frac{B_k}{k!}(m+1)^kn^{-k} =
   \frac{n^{-1}(m+1)}{\e^{n^{-1}(m+1)}-1} = (m+1)q_{m+1}(n)\,.
\end{equation*}
We can bound $S(\tau)=\sum_{k\geq2}^\infty \frac{|B_k|}{k!}\frac{k(k+1)}2
\tau^k$ for $|\tau| \leq 1$ by $C_1|\tau|^2$ with $C_1 = \sum_{k\geq2}^\infty
\frac{|B_k|}{k!}\frac{k(k+1)}2<\infty$. Hence:
\begin{equation*}
  n\geq m+1\implies \Bigl| (m+1)q_{m+1}(n) - (m+1)p_m(n)\Bigr|
\leq C_1 (m+1)^{-1}\bigl((m+1)n^{-1}\bigr)^2.
\end{equation*}
For $n=2^l$, $l>L_m$, we have $n>(m+1)$ indeed.  And summing
$\bigl((m+1)n^{-1}\bigr)^2$ over such powers of $2$ is bounded above by $1 +
4^{-1} + 16^{-1} + \dots$.  Hence Equation~\eqref{eq:K7}.

A
modification in the handling the binomial coefficients will
readily give Equation~\eqref{eq:K6}. We write this time:
  \begin{equation*}
    \binom{m+1}k = \frac{1}{k!}m^k\prod_{-1\leq p < k-1}(1 - m^{-1}p)\,.
\end{equation*}
The product (depending on $k\in\Iff{0}{m}\cap\ZZ$) is bounded above by $1+m^{-1}$
and below by $\prod_{0\leq p < k-1}(1 - m^{-1}p)$, hence the distance to $1$
is bounded in absolute value by
\begin{equation*}
  m^{-1}\delta_k, \qquad \text{with }\delta_k = \max(1, \frac{(k-1)(k-2)}{2})\qquad(k\geq1).
\end{equation*}
Moreover, $\delta_0=0$.  We can use a weaker upper bound $k(k+1)/2$, so that
$k(k+1)/2 \geq m$ holds for $k\geq m+1$.  This avoids having to discuss
separately the (exponentially small, as one can check) contribution from the
$k>m$ range.  The sole differences with our earlier situation is that $k$ now
starts at $1$ not at $2$, and that powers of $m+1$ are replaced by powers of
$m$, so that we have to consider the sum of $mn^{-1}$, not of
$\bigl((m+1)n^{-1}\bigr)^2$, over $n=2^l>m+1$.  We obtain then Equation~\eqref{eq:K6}.
\end{proof}

\begin{prop}\label{prop:emfm}
Let, for positive real $m$,
\begin{equation}
    \label{eq:fm}
  f_m = \sum_{l=0}^\infty \bigl(1 - \frac{m2^{-l}}{\e^{m2^{-l}}-1}\bigr)\,.
\end{equation}
 There holds $|e_m - f_m|\leq \frac{K}{m+1}$ for all $m>0$ and some constant
 $K$.
\end{prop}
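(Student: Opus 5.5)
Subtracting the defining series \eqref{eq:empm} and \eqref{eq:fm} term by term gives the plan directly. Note that $\frac{m2^{-l}}{\e^{m2^{-l}}-1} = m\,q_m(2^l)$ by \eqref{eq:qm}. Both series converge absolutely: the terms of \eqref{eq:empm} are $O(2^{-l})$ by \eqref{eq:rectangles}, and the terms of \eqref{eq:fm} are $O(m2^{-l})$. Hence we may write
\begin{equation*}
  e_m - f_m = \sum_{l=0}^\infty \bigl(m\,q_m(2^l) - (m+1)p_m(2^l)\bigr)\,,
\end{equation*}
and the whole task is to bound the absolute value of this series by $K/(m+1)$.

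We split the range of $l$ at $L_m=\lfloor\log_2(m+1)\rfloor$. For $0\leq l\leq L_m$ we invoke Lemma~\ref{lem:bound5}, which gives directly
\begin{equation*}
  \sum_{l=0}^{L_m}\bigl|m\,q_m(2^l)-(m+1)p_m(2^l)\bigr|\leq \frac{K_5}{m+1}\,.
\end{equation*}
For $l>L_m$ we invoke Equation~\eqref{eq:K6} of Lemma~\ref{lem:bound6}, which bounds the remaining tail by $K_6/(m+1)$.

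Adding the two estimates with the triangle inequality yields $|e_m-f_m|\leq (K_5+K_6)/(m+1)$ for every real $m>0$. This is the claim with $K=K_5+K_6$.

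There is no real obstacle, since the two lemmas were designed exactly for this split. The only points to check are the following.
\begin{itemize}
\item The extension of $e_m$ to real $m$ via \eqref{eq:empm} is the one used in those lemmas, so Lemma~\ref{lem:bound5} and Lemma~\ref{lem:bound6} apply for all $m>0$.
\item For $0<m<1$ we have $L_m=0$, and both lemmas remain valid in that range.
\end{itemize}
If one were worried about the case $0<m<1$ in Lemma~\ref{lem:bound6}, the fallback is to use \eqref{eq:K7} instead. One then bounds the extra difference $\sum_{l>L_m}|m\,q_m(2^l)-(m+1)q_{m+1}(2^l)|$ directly. The function $x\mapsto x/(\e^x-1)$ has bounded derivative, so each term is at most $C\,2^{-l}$, and summing over $2^l>m+1$ gives $O(1/(m+1))$.
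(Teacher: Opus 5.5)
Your proof is correct and is exactly the paper's argument: write $e_m-f_m=\sum_{l\geq0}\bigl(m\,q_m(2^l)-(m+1)p_m(2^l)\bigr)$, then bound the sum of absolute values by Lemma~\ref{lem:bound5} for $l\leq L_m$ and by \eqref{eq:K6} for $l>L_m$. Your fallback (deducing \eqref{eq:K6} from \eqref{eq:K7} via the Lipschitz property of $x\mapsto x/(\e^x-1)$) is a valid shortcut, but it adds no generality for non-integer $m$: both parts of Lemma~\ref{lem:bound6} are proved through \eqref{eq:pmber}, which holds only for integer $m$, so on that point you rely on Lemma~\ref{lem:bound6} as stated, just as the paper does.
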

\begin{proof}
  The series defining $f_m$ has positive terms and is convergent.
  Lemmas~\ref{lem:bound5} and~\ref{lem:bound6} allow to bound
  $|e_m-f_m|=\Bigl|\sum_{l=0}^\infty \bigl(mq_m(2^l)-(m+1)p_m(2^l)\bigr)\Bigr|$,
  even after having moved the absolute values inside the summation.
\end{proof}


  
\section{Interlude on another sequence}\label{sec:knuth}

As part of \cite[Ex.\@ 6.3-34]{knuth} the following is asked (we have
  translated to our notation%
\footnote{Mind that this $a_m$ is not exactly the $a_n$ from Equation (18) of
\cite[\S6.3]{knuth}.}).
\emph{Let for $m\geq2$
  \begin{align*}
    a_m &= \sum_{l=1}^\infty\biggl(\sum_{1\leq k<2^l}(2^{-l}k)^{m-1}
    -
    \frac{2^{l}}{m} + \frac12\biggr)
\\
    b_m &= \sum_{l=1}^\infty \biggl(\frac{1}{\e^{2^{-l}m}-1}
    -\frac{2^{l}}{m} + \frac12\biggr).
  \end{align*}
  Show $a_m = b_m + O_{m\to\infty}(m^{-1})$.} We note that the general term
$a_m(l)$ in
the $a_m$ series is $O(2^{-l})$. This follows from Euler-Maclaurin, or
directly from using Bernoulli polynomials:
\begin{equation}\label{eq:aml}
  a_m(l) = \frac{B_m(2^l)-B_m(0)}{m 2^{l(m-1)}} -
    \frac{2^{l}}{m} + \frac12 = \sum_{k=2}^{m-1}\binom{m}{k}m^{-k} B_k m^{k-1}(2^{-l})^{k-1}\,.
\end{equation}
We used $B_0 = 1$, $B_1 = -\frac12$ and isolated a factor $m^{-k}$ to merge it
with the binomial coefficient.
Similarly to the proof of Lemma~\ref{lem:bound6}, we now take note of the inequality
\begin{equation*}
  0\leq\frac {1}{k!} - \frac{\binom{m}{k}}{m^k}\leq \frac 1{k!}\frac{k(k-1)}{2m}\,.
\end{equation*}
Let $b_m(l)$ be the $l$th term in $b_m$.
We see that we are going to obtain $\sum_{2^l >m} |b_m(l)-a_m(l)| =
O(m^{-1})$ via the similar steps as in the proof of Lemma~\ref{lem:bound6}.
It is important that in Equation~\eqref{eq:aml}, the smallest exponent of
$m2^{-l}$ is $1>0$.

There remains to evaluate the contributions for
$1\leq 2^l \leq m$.  Using the notation from the previous section, it is a question
of upper bounding
\begin{equation*}
  \sum_{1\leq 2^l\leq m}\left| 2^l p_{m-1}(2^l) - 2^l q_m(2^l)\right|.
\end{equation*}
This is at most:
\begin{equation*}
  S_1 + S_2 = \sum_{1\leq 2^l\leq m}\left| 2^l p_{m-1}(2^l) - 2^l q_{m-1}(2^l)\right| +
  \sum_{1\leq 2^l\leq m} \left|2^l q_{m-1}(2^l) - 2^l q_{m}(2^l)\right|\,.
\end{equation*}
For $S_1$, we use $2^l\leq m$ and Lemma~\ref{lem:bound4} to see
that $S_1\leq mK_4(m-1)^{-2} = O(1/m)$.
Regarding $S_2$, we estimate, for each positive integer $n$:
\begin{align*} 
(\e^{n^{-1}(m-1)}-1)^{-1}-(\e^{n^{-1}m}-1)^{-1}
&=
\frac{\e^{n^{-1}\xi}}{n(\e^{n^{-1}\xi}-1)^2}\quad(\xi\in\Iff{m-1}m)
\\
&= \frac1{n 4 \sinh^2((2n)^{-1}\xi)}
\leq \frac{n}{\xi^2}\leq \frac{n}{(m-1)^2}\,.
\end{align*}
So $S_2\leq 2m/(m-1)^2 = O(1/m)$ and the answer to \cite[Ex.\@ 6.3-34]{knuth}
is complete.%
\footnote{See also the errata list, updated in May 2026,
  at \url{https://www-cs-faculty.stanford.edu/~knuth/taocp.html}, about the
  answer on page 727 of \cite{knuth} for this Exercise 6.3-34.}%
\textsuperscript{,}%
\footnote{Theorem 4.1 of \cite{szp1987} looked at first sight as being a
  general estimate having Exercice 6.3-34(b) as a special case.  But the
  $T(n,r,s)$ of that reference approximates in general the $S(n,r,s)$ only to
  order $O(n^{-s})$, not $O(n^{-s-1})$ as claimed.  So, using $d = 2$, $q =
  1$, $r = 0$, $s = 0$, $t = 2$ in the notations of that reference, this
  Theorem only provides $O(1)$ for Exercice 6.3-34.  Similarly with $d = 2$,
  $q = 1$, $r = 0$, $s = 1$, $t = 1$, then $T(n,r,s)-S(n,r,s)$ is related with
  $(e_n - f_n)/(n+1)$ in our notation and provides only $e_n-f_n=O(1)$.
  Details available from the author.}



\section{The oscillating term}\label{sec:gm}

Clearly the series from Equation~\eqref{eq:fm} defining $f_m$ converges
uniformly for $m$ bounded (for now, $m$ is real positive).  We define (at first, for
positive real $m$):
\begin{equation}
\label{eq:gm}
  g_m  = -  \sum_{l=-1}^{-\infty} \frac{m2^{-l}}{\e^{m2^{-l}}-1}\,.
\end{equation}
We
bound trivially the terms in the series defining $-g_m$:
\begin{gather}\notag
  x>0\implies \frac{x}{\e^x -1}= \frac{x \e^{-x/2}}{2 \sinh(x/2)}\leq \e^{-x/2}
\\
\label{eq:gmbound}
 0\leq - g_m \leq \frac{2m}{\e^{2m}-1} + \sum_{n=4}^\infty \e^{-nm/2} = O_{m\to\infty}(me^{-2m}).
\end{gather}
And this also showed that the series defining $g_m$ is uniformly convergent
for $m\geq a>0$.  We observe that $f_{2m} = f_m + (1 - \frac{2m}{\e^{2m}-1})$
and $g_{2m} = g_m + \frac{2m}{\e^{2m}-1}$, so $f_{2m}+g_{2m} = 1 + f_m + g_m$.
This motivates the definition of the $1$-periodic function on $\Ioo0\infty$:
\begin{equation}\label{eq:phi}
  \phi(t) = -t -\frac12 + \sum_{l=0}^\infty \bigl(1 - \frac{2^{t-l}}{\e^{2^{t-l}}-1}\bigr)
  - \sum_{l=-1}^{-\infty} \frac{2^{t-l}}{\e^{2^{t-l}}-1}\,.
\end{equation}
The reason for the extra term $-\frac12$ is that this makes the average of
$\phi$ vanish, as will be seen later.
As shown previously the two series are uniformly convergent for $0\leq t\leq
1$, so $\phi$ is a continuous function. Here is a stronger (elementary) statement:
\begin{prop}\label{prop:phiana}
  Equation~\eqref{eq:phi} defines $\phi(t)$ as an analytic function of
  $t$ in the horizontal strip $|\Im t| < \frac{\pi}{2\log(2)}$.
\end{prop}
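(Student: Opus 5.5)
Set $z=2^t$ with $t=u+iv$, $|v|<\frac{\pi}{2\log 2}$. Then $z=2^u\e^{iv\log 2}$ and $|\arg z|=|v|\log 2<\pi/2$, so $\Re z>0$. Each summand $2^{-l}z$ has the same argument. The plan is to show that both series in \eqref{eq:phi} are normally convergent on compact subsets of the strip, with terms holomorphic in $t$. Weierstrass's theorem then gives analyticity, since $-t-\frac12$ is entire.

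\textbf{Holomorphy of the terms.} Put $h(w)=w/(\e^w-1)$. It is holomorphic except at $w\in 2\pi i\ZZ\setminus\{0\}$, with a removable singularity at $0$. These poles lie on the imaginary axis, while every argument $w=2^{t-l}$ satisfies $\Re w>0$. So each term $1-h(2^{t-l})$ and $h(2^{t-l})$ is holomorphic in $t$ on the strip.

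\textbf{Uniform bounds.} Fix a compact $K$ in the strip. On $K$ we have $|\arg 2^t|\leq\theta_0<\pi/2$ and $A^{-1}\le|2^t|\le A$. Write $w=2^{t-l}$.

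For $l\geq0$ and $l\to+\infty$, $|w|\to0$ uniformly on $K$. Since $1-h(w)=\frac w2+O(|w|^2)$ near $0$, there is $C$ with $|1-h(w)|\leq C|w|$ for $|w|\leq A$. This is legitimate because $1-h$ is holomorphic on the disc $|w|<2\pi$; for the finitely many $l$ with $|w|$ possibly $\ge 1$, continuity on compacts suffices. This gives the majorant $CA2^{-l}$, which is summable.

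For $l\leq-1$, $w=2^{|l|}2^t$ and $\Re w\geq 2^{|l|}A^{-1}\cos\theta_0\to\infty$. Use $|\e^w-1|\geq \e^{\Re w}-1$ to get
\[|h(w)|\leq \frac{|w|}{\e^{\Re w}-1}\leq \frac{2^{|l|}A}{\e^{2^{|l|}c}-1},\qquad c=A^{-1}\cos\theta_0>0.\]
This is summable over $l\le -1$.

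The main point needing care is the first range. It requires a bound on $1-h(w)$ valid in a sector near $0$, possibly reaching $|w|$ up to $A$, not just on a small disc. The cleanest route is to split at a fixed $l_0$ with $A2^{-l_0}<\pi$. For $l\ge l_0$, $|w|<\pi$ and the Taylor bound applies. The finitely many remaining terms are individually holomorphic, so they pose no convergence issue.

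Normal convergence then yields holomorphy of $\phi$ on the strip. The bound $\pi/(2\log 2)$ is exactly the condition $\Re 2^t>0$. It is what keeps $2^{t-l}$ away from the poles $2\pi i k$ and makes the $l\le-1$ tail decay doubly exponentially.
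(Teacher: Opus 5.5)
Your proof is correct and follows essentially the paper's route. The paper likewise substitutes $z=2^t$, treats the $l\ge0$ series as a meromorphic function of $z$ with poles only in $2\pi i\ZZ\setminus\{0\}$ (hence analytic where $\Re 2^t>0$), and handles the $l\le-1$ series by uniform absolute convergence on rectangles $|\Im t|\le\frac{\pi}{2\log 2}(1-\epsilon)$, bounding the denominator below and the numerator above exactly as you do; your version simply writes out the normal-convergence estimates (the bound $|1-h(w)|\le C|w|$ for $|w|\le\pi$ and the doubly exponential decay for $l\le-1$) that the paper leaves to the reader.
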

\begin{proof}
  Details are left to reader: $\sum_{l=0}^\infty (1 -
  \frac{2^{-l}z}{\e^{2^{-l}z}-1})$ defines a meromorphic function in the
  entire complex plane, with poles at the elements of $2\pi i
  \ZZ\setminus\{0\}$, so in particular if we substitute $z=2^t$, we obtain an
  analytic function in the strip $|\Im t|<\frac{\pi}{2\log 2}$.  Regarding
  $\sum_{k=1}^\infty \frac{2^k2^t}{\e^{2^k2^t}-1}$, it is absolutely and
  uniformly convergent if $a<\Re t<b$, $|\Im
  t|<\frac\pi{2\log(2)}(1-\epsilon)$, $0<\epsilon<1$ (so that $\Re
  2^t>2^a\sin\frac{\pi\epsilon}2$), from bounding below the denominator and
  above the numerator. This proves that $\phi(t)$ is as stated.
\end{proof}

Since $\phi(\log_2(m)) = -\log_2(m) - \frac12 + f_m + g_m$, we now get from
Proposition~\ref{prop:emfm} and the bound~\eqref{eq:gmbound}:
\begin{prop}\label{prop:emphi}
  The quantities $(e_m)$ defined by the recurrence~\eqref{eq:emrec}, or, more
  generally, defined for all real positive $m$ by
  Equation~\eqref{eq:emformula}, verify the asymptotic estimate, as
  $m\to+\infty$:
  \begin{equation*}
    e_m = \log_2(m) + \frac12 + \phi(\log_2(m)) + O(\frac{1}m),
  \end{equation*}
  where $\phi(t)$ is the function defined by Equation~\eqref{eq:phi}. It is
  $1$-periodic and analytic for $|\Im t |<\frac\pi{2\log(2)}$.
\end{prop}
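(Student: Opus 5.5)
The estimate follows by combining Proposition~\ref{prop:emfm} with the bound~\eqref{eq:gmbound}. The remaining work is to justify the periodicity of $\phi$ and the identity $\phi(\log_2 m)=-\log_2(m)-\frac12+f_m+g_m$. Analyticity in the strip is already given by Proposition~\ref{prop:phiana}.

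First I set $m=2^t$ in the definitions~\eqref{eq:fm} and~\eqref{eq:gm}. Then $m2^{-l}=2^{t-l}$, and comparing with~\eqref{eq:phi} gives directly, for every real positive $m$,
\begin{equation*}
  \phi(\log_2 m) = -\log_2(m) - \tfrac12 + f_m + g_m .
\end{equation*}
For periodicity I use the observation made just before~\eqref{eq:phi}: replacing $m$ by $2m$ shifts the index $l$ by one in both series. The single term with $l=0$ moves from the $f$-series to the $g$-series, which yields $f_{2m}+g_{2m}=1+f_m+g_m$. Since $\log_2(2m)=\log_2(m)+1$, this gives $\phi(t+1)=\phi(t)$ for all real $t$. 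Both series converge absolutely, which justifies the reindexing. Periodicity then extends to the strip $|\Im t|<\pi/(2\log 2)$ by analytic continuation, or by the same reindexing carried out with $z=2^t$ complex.

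Now I write
\begin{equation*}
  e_m - \log_2(m) - \tfrac12 - \phi(\log_2 m) = (e_m - f_m) - g_m .
\end{equation*}
Proposition~\ref{prop:emfm} gives $|e_m-f_m|\le K/(m+1)$ for all $m>0$. By~\eqref{eq:gmbound}, $0\le -g_m=O(m\,\e^{-2m})$, and this is certainly $O(1/m)$ as $m\to\infty$. Adding the two bounds gives the claimed $O(1/m)$ error term, both for integer $m$ defined through the recurrence~\eqref{eq:emrec} and for real $m$ defined through~\eqref{eq:emformula}. Here I use that the two definitions agree on integers, as established by the Proposition giving~\eqref{eq:emformula}.

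There is no real obstacle, since the substantive estimates are already contained in Proposition~\ref{prop:emfm}. The only point needing care is the convergence bookkeeping when reindexing the two series for periodicity. This is handled by the absolute convergence already noted, with the uniform convergence of the $g$-series for $m\ge a>0$ covering that series in particular.
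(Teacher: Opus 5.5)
Your proposal is correct and follows the paper's own argument: the identity $\phi(\log_2 m)=-\log_2(m)-\frac12+f_m+g_m$, periodicity from $f_{2m}+g_{2m}=1+f_m+g_m$, and analyticity from Proposition~\ref{prop:phiana}. The $O(1/m)$ error then comes from Proposition~\ref{prop:emfm} together with the bound~\eqref{eq:gmbound} on $g_m$.
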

We prove in the next section that $\phi(t)$ has zero mean.

\section{A Fourier series}\label{sec:fourier}

We compute the Fourier series of the $1$-periodic function $\phi(t)$ from
Equation~\eqref{eq:phi}.  Let $L$ be some positive integer. Let $n\in \ZZ$.  We start with
\begin{gather*}
\int_0^1 \e^{-2\pi i nt}\sum_{l=-\infty}^L\frac{2^{t-l}}{\e^{2^{t-l}}-1}\dt
=\int_0^1 \e^{-2\pi i nt}\sum_{l=-L}^\infty\frac{2^{t+l}}{\e^{2^{t+l}}-1}\dt
\\=\int_{-L}^{+\infty} \e^{-2\pi i nt}\frac{2^{t}}{\e^{2^{t}}-1}\dt
=\frac1{\log 2}\int_{2^{-L}}^\infty \e^{-2\pi i n\log_2(x)}\frac{\dx}{\e^x-1}
\\
=\frac1{\log 2}\int_{1}^\infty x^{-2\pi i n(\log 2)^{-1}}\frac{\dx}{\e^x-1}
+\frac1{\log 2}\int_{2^{-L}}^1  x^{-2\pi i n(\log 2)^{-1}}\Bigl(\frac{1}{\e^x-1}-\frac1x\Bigr)\dx
\\ + \frac1{\log 2}
\begin{cases}
  (-2\pi i n(\log2)^{-1})^{-1}\biggl[x^{-2\pi i n(\log 2)^{-1}}\biggr]_{2^{-L}}^1=0&(n\neq 0),
\\
  L\log(2)&(n=0).
\end{cases}
\end{gather*}
Let us first handle $n=0$.  We thus have:
\begin{align*}
  c_0(\phi)\coloneq\int_0^1 \phi(t)dt &=
    \lim_{L\to\infty} \biggl(-\frac12 - \frac12 + (L+1) 
    - \frac1{\log 2} \int_{2^{-L}}^\infty \frac{\dx}{\e^x-1}\biggr)
    \\
&=\lim_{L\to\infty}\biggl(L
    - \frac1{\log 2} \bigl[\log(1 - \e^{-x})\bigr]_{2^{-L}}^\infty\biggr)
    \\
&=\lim_{L\to\infty}\log_2\bigl(2^L(1 - \e^{-2^{-L}})\bigr) = 0.
\end{align*}
Suppose now $n\neq0$. We obtain
\begin{align*}
  c_n(\phi)&\coloneq\int_0^1\e^{-2\pi i nt}\phi(t)dt = \frac{1}{2\pi in}
-\frac1{\log2}
\begin{aligned}[t]
  \biggl(&\int_{1}^\infty x^{-2\pi i n(\log 2)^{-1}}\frac{\dx}{\e^x-1}
  \\&+\int_{0}^1  x^{-2\pi i n(\log 2)^{-1}}\Bigl(\frac{1}{\e^x-1}-\frac1x\Bigr)\dx
\biggr)
\end{aligned}
\\
&=\frac{1}{2\pi in}
-\frac1{\log2} \lim_{h\to0^+}
\begin{aligned}[t]
  \biggl(&\int_{1}^\infty x^{-2\pi i n(\log 2)^{-1}+h}\frac{\dx}{\e^x-1}
  \\&+\int_{0}^1  x^{-2\pi i n(\log 2)^{-1}+h}\Bigl(\frac{1}{\e^x-1}-\frac1x\Bigr)\dx
\biggr)
\end{aligned}
\\
&=
\frac{1}{2\pi in}
-\frac1{\log2} \lim_{h\to0^+}
  \biggl(\int_{0}^\infty x^{-2\pi i n(\log 2)^{-1}+h}\frac{\dx}{\e^x-1}
  -\int_0^1  x^{-2\pi i n(\log 2)^{-1}+h-1}\dx
\biggr)
\\
&=
\frac{1}{2\pi in}
-\frac1{\log2}
  \biggl(\Gamma\bigl(1-2\pi i n(\log 2)^{-1}\bigr)
         \zeta\bigl(1-2\pi i n(\log 2)^{-1}\bigr)
  -\frac{1}{-2\pi i n(\log2)^{-1}}
\biggr)
\\
&= -(\log 2)^{-1}\Gamma\bigl(1-\frac{2\pi i n}{\log 2}\bigr)
                \zeta\bigl(1-\frac{2\pi i n}{\log 2}\bigr).
\end{align*}
We have used the basic Riemann formula $\Gamma(s)\zeta(s)=\int_{0}^\infty
x^{s-1}(\e^x-1)^{-1}\dx$ for $\Re s>1$. Using the usual bound
$|\zeta(1+it)|=O_{t\to\pm\infty}(\log |t|)$ (\cite[Thm.\@ 3.5]{titchmarsh}), and
$|\Gamma(1+it)|\sim \sqrt{2\pi |t|}\exp(-\frac\pi2|t|)$ from Stirling formula
in the complex domain (or more expediently from the reflection formula
$\Gamma(s)\Gamma(1-s)=\pi/\sin(\pi s)$),
we recover the analyticity of $\phi(t)$ in the strip $|\Im t|<\pi/\log(4)$
which was stated in Proposition~\ref{prop:phiana}.


The proof of Theorem~\ref{thm:em} is complete.

\section{Ratios of Gamma functions}

Note that no statement such as $\Gamma(z+a)z^{-a}/\Gamma(z) = 1 +
O(|a|^2/|z|)$ for $|z|\to\infty$ holds if no conditions are set on $a$ (take
$z=a=n$, for $n\to\infty$).  The statement is true for a fixed $a$, or for a
bounded one (it must also be bounded away from zero).  The next Theorem
explains that the result holds if $|a|^2/|z| = O(1)$ (with rather
$O(|a|(1+|a|)/|z|)$ as first order error bound to account for small $a$'s).
We handle more generally the ratio $\Gamma(z+a)/\Gamma(z+b)$ and establish the
full asymptotic under the condition $\max(|a|^2,|b|^2) = O(|z|)$, for $z$ in
some angular sector avoiding the negative real axis; but we can speak really
of an \emph{asymptotic expansion} (of some sort) in general only if
$\max(|a|^2,|b|^2) = o(|z|)$ (the case $a=b+k$ for some fixed integer $k$
shows that the latter is only a sufficient condition, though).

As the matter was handled for fixed (or bounded) $a$ and $b$ in a well-known
1951 paper by \textsc{Tricomi}-\textsc{Erdélyi} \cite{tricomi1951}, we use
their notation with $\alpha$ and $\beta$.  The validity under the more general
condition which we consider here is presumably well-known, but the proof is
not quite trivial (it applies general principles that can generalize), and our
search through the literature brought no matches. It is not to be found in
\cite[\S2.11]{luke1969volI} nor in \cite[\S4.5]{olver1997}, despite the fact
that both discuss in detail the ratio of two Gamma functions; but for fixed
parameters. Reference \cite[\S5.11(iii)]{NIST:DLMF} also explicitly says that
the parameters are constants.
\begin{theo}\label{thm:gammaratio}
  Let $c_1>0$.  Let $0<\eta<\pi$.  There exists $A>0$, such that for every
  positive integer $J$ there exists a constant $C_J$ such that for every
  complex number $z$ verifying $|z|\geq A$ and $|\Arg z|\leq\pi - \eta$, and
  for every pair of complex numbers $(\alpha, \beta)$ verifying the condition
  \begin{equation*}
    \max(|\alpha|^2,|\beta|^2)\leq c_1 |z|,
  \end{equation*}
  the following inequality holds:
  \begin{equation}\label{eq:Rasymp}
    \left|\frac{\Gamma(z+\alpha)z^{\beta-\alpha}}{\Gamma(z+\beta)}
      - \sum_{0\leq j<J}\frac{Q_j(\alpha,\beta)}{z^j}\right|
    \leq C_J\frac{|\alpha-\beta|\max(1, |\alpha|, |\beta|)^{2J-1}}{|z|^J}\,,
  \end{equation}
  where $Q_0=1$, and $Q_j$ for $j>0$
  is a polynomial in $(\alpha,\beta)$.

  The $Q_j$'s are determined uniquely by the fact that $Q_0=1$,
  $Q_j(\alpha,\alpha)=0$ for $j>0$ and by either one of the following sets of
  recurrences:
  \begin{gather}
\label{eq:recQalpha}
\forall j>0, \quad Q_j(\alpha+1,\beta)-Q_j(\alpha,\beta) = \alpha
Q_{j-1}(\alpha,\beta)
\\
\label{eq:recQbeta}
\forall j>0, \quad Q_j(\alpha,\beta+1)-Q_j(\alpha,\beta) = -\beta
Q_{j-1}(\alpha,\beta+1).
  \end{gather}
$Q_j$ is of total degree $2j$.
\end{theo}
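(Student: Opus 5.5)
We take logarithms and start from the parameter-free Stirling series, as announced in the introduction. Put $F(z)=\log\Gamma(z)-(z-\frac12)\log z+z-\frac12\log(2\pi)$. For $|\Arg z|\le\pi-\eta$ and $|z|$ large, the Stirling expansion gives $F(z)=\sum_{1\le r<R}d_r z^{-r}+O(|z|^{-R})$ uniformly in the sector; we will not need the values of the $d_r$. Then
\begin{equation*}
\log\frac{\Gamma(z+\alpha)z^{\beta-\alpha}}{\Gamma(z+\beta)}
=\Bigl[(z+\alpha-\tfrac12)\log(1+\tfrac{\alpha}z)-\alpha\Bigr]-\Bigl[\text{same with }\beta\Bigr]+F(z+\alpha)-F(z+\beta).
\end{equation*}
First choose $A$ large in terms of $c_1$ and $\eta$. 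This makes $|\alpha/z|,|\beta/z|\le\sqrt{c_1/|z|}$ small, so $z+\alpha$ and $z+\beta$ stay in a slightly larger sector, say with $\eta/2$, and satisfy $|z+\alpha|\ge|z|/2$.

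\textbf{Step 1: the elementary bracket.} Expand it as a power series in $u=\alpha/z$:
\begin{equation*}
(z+\alpha-\tfrac12)\log(1+u)-\alpha=\sum_{k\ge2}\frac{(-1)^k\alpha^k}{k(k-1)z^{k-1}}-\frac12\sum_{k\ge1}\frac{(-1)^{k-1}\alpha^k}{kz^k}.
\end{equation*}
The term of order $z^{-j}$ has weight at most $2j$ in $\alpha$, with $\alpha^{j+1}/z^j$ being of size $O(|\alpha|^{2j}/|z|^j)\cdot|\alpha|^{1-j}$. The key point is that the condition $|\alpha|^2\le c_1|z|$ makes each term $\alpha^k z^{-(k-1)}$ bounded by $|\alpha|(c_1)^{(k-1)/2}|z|^{-(k-1)/2}$. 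So the tail of the series after order $z^{-J}$ is bounded by a geometric series, giving an error $O(\max(1,|\alpha|)^{2J+1}/|z|^{J}\cdot|z|^{-1}\cdots)$. More precisely, we will organize the expansion by the quantities $\alpha^2/z$ and $\alpha/z$, both of which are $O(|z|^{-1/2})$.

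\textbf{Step 2: the $F$-difference.} Expand $F(z+\alpha)=\sum_r d_r z^{-r}(1+\alpha/z)^{-r}$ binomially. The remainder $O(|z+\alpha|^{-R})$ is harmless once $R$ is taken large, since $|z+\alpha|\ge|z|/2$. The terms with $\alpha^p z^{-r-p}$ have weight $p\le 2(r+p)$, so they are even smaller.

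\textbf{Step 3: assembly and the factor $|\alpha-\beta|$.} The total logarithm $\Lambda$ is a function of $(\alpha,\beta)$ vanishing at $\alpha=\beta$. Every truncated piece is a polynomial $G(\alpha)-G(\beta)$, and we will write it as $(\alpha-\beta)$ times a divided difference. For the remainders, use the integral form $\int_\beta^\alpha\partial_a(\cdots)\,da$ along the segment, whose points all satisfy the same bound $|a|^2\le c_1|z|$. Bounding the derivative of each remainder uniformly gives the factor $|\alpha-\beta|$. We then exponentiate: $\Lambda=O(|\alpha-\beta|\max(1,|\alpha|,|\beta|)/|z|)$, and because $|\alpha-\beta|\max(\ldots)\le 2\max(1,\ldots)^2=O(1)$, $\Lambda$ is bounded. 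Hence $\exp$ is analytic with controlled Taylor remainder. Products of terms $\Lambda^p$ keep one factor $(\alpha-\beta)$, and the weight count (degree $\le 2j$ at order $z^{-j}$) yields the bound $|\alpha-\beta|\max(1,|\alpha|,|\beta|)^{2J-1}|z|^{-J}$.

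The main obstacle is this bookkeeping: every remainder must be bounded by $|\alpha-\beta|\max(\cdot)^{2J-1}|z|^{-J}$ and not merely by $\max(\cdot)^{2J}|z|^{-J}$. The plan is to handle it via the segment-integral trick and the invariant that degree is at most $2\times$ order. Cleanly, this means working with a two-variable expansion in $w_1=1/z$ and $w_2=\alpha^2/z$, both $O(|z|^{-1}\max^2)$.

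\textbf{Step 4: characterizing the $Q_j$.} The expansion gives polynomial $Q_j$ with $Q_0=1$, $Q_j(\alpha,\alpha)=0$ and total degree $2j$; the top-degree term comes from $(\alpha^2-\beta^2)^j/(2^jj!)$. For the recurrences, use $\Gamma(z+\alpha+1)=(z+\alpha)\Gamma(z+\alpha)$. This gives $R(\alpha+1,\beta)=(1+\alpha/z)R(\alpha,\beta)$ for the ratio $R$ in the statement. Comparing coefficients of $z^{-j}$, which is legitimate by uniqueness of asymptotic expansions for fixed $\alpha,\beta$, yields \eqref{eq:recQalpha}; similarly $\beta\to\beta+1$ gives \eqref{eq:recQbeta}. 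For uniqueness, a polynomial $D$ with $D(\alpha+1,\beta)=D(\alpha,\beta)$ is constant in $\alpha$, so $D(\alpha,\beta)=D(\beta,\beta)=0$. Inducting on $j$ then shows the recurrences plus the normalization determine each $Q_j$.
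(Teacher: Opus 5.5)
Your proposal is correct and follows essentially the paper's own route: Stirling with remainder, power-series expansion of the elementary part with $\alpha-\beta$ extracted by divided differences, the segment integral for the Stirling remainder (whose derivative bound you should justify, as the paper does, by Cauchy estimates in the larger sector), exponentiation using $\Lambda=O(M^2/|z|)=O(1)$ with $M=\max(1,|\alpha|,|\beta|)$ together with the count ``degree at most twice the order'' and $M^2\le c_1|z|$, and the recurrences from $\Gamma(w+1)=w\Gamma(w)$ plus uniqueness of asymptotic expansions. One slip to fix: $\alpha^2/z$ is only bounded by $c_1$, not $O(|z|^{-1/2})$, but nothing breaks, since geometric convergence only uses $|\alpha/z|\le\sqrt{c_1/|z|}$ small, and absorbing the orders $\ge J$ only uses $M^2/|z|$ bounded.
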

\begin{proof}
  Assuming we have proven the inequalities, they provide for a fixed pair
  $(\alpha,\beta)$ an asymptotic expansion as $|z|\to\infty$ with $z$ in the
  angular sector.  So the coefficients are unique.  It will be a corollary to
  the proof that $Q_0=1$, and that the $Q_j$'s are polynomials of total
  degrees in $(\alpha,\beta)$ at most $2j$.  The identities
  \begin{align*}
    \frac{\Gamma(z+\alpha+1)z^{\beta-\alpha-1}}{\Gamma(z+\beta)} &= \bigl(1 +
    \frac{\alpha}z\bigr)\frac{\Gamma(z+\alpha)z^{\beta-\alpha}}{\Gamma(z+\beta)}
    \\
    \frac{\Gamma(z+\alpha)z^{\beta-\alpha}}{\Gamma(z+\beta)} &= \bigl(1 +
    \frac{\beta}z\bigr)
    \frac{\Gamma(z+\alpha)z^{\beta-\alpha+1}}{\Gamma(z+\beta+1)}
  \end{align*}
  and unicity of asymptotic expansions imply the Equations
  \eqref{eq:recQalpha} and \eqref{eq:recQbeta}. Besides, again by unicity of
  asymptotic expansions, the $Q_j$'s for $j>0$
  must vanish for $\alpha=\beta$.  Equation \eqref{eq:recQalpha} determines
  $Q_j$, up to an additive constant depending on $\beta$ only, once $Q_{j-1}$ is known.
  As
  $Q_j$ has to vanish for $\alpha=\beta$, it is uniquely determined. The same
  applies with the recurrence \eqref{eq:recQbeta}.

  The statement on the total degrees being exactly $2j$ follows from the
  recurrences (see also the remarks located after the proof, on the relation
  with the generalized Bernoulli polynomials).

  Let us define $\cA = \{w\in \CC, |w|\geq 2, |\Arg w|\leq \pi- \eta\}$, which
  we call the \emph{small angular sector} (which is indented) and $\cB =
  \{w\in \CC, |w|\geq 1, |\Arg w|\leq \pi -\frac12\eta\}$, which we call the
  \emph{large} (indented) \emph{angular sector}.  For $z\in \cA$, we let $d_z$
  be the distance from $z$ to the complement of $\cB$, so that the closed disk
  centered at $z$ of radius $d_z$ is contained in the larger (closed) sector.
  There exists $\kappa>0$ such that $d_z\geq \kappa |z|$.  There is $A\geq 2$
  such that for $z\in \cA$, if $|z|\geq A$ and $|w|^2\leq c_1|z|$, then
  $|w|\leq \min(\frac12,\kappa)|z|$. One takes $A = \min\bigl(2,
  c_1/\min(1/4,\kappa^2)\bigr)$.  We note that then $z+w$ belongs to the
  larger sector (as well as the full segment from $z$ to $z+w$), and that
  $|w|\leq \frac12|z|$.

  We shall use the Stirling asymptotic series, as proven in
  \cite[\S12.33, \S13.6]{whittakerwatson4thed}, and in \cite[1.18]{erdelyiI}
  (for the latter, as consequence of the \emph{Watson's lemma} applied to a
  certain loop integral representation \cite[1.9(5)]{erdelyiI}; cf.\@
  \cite[\S1.4, \S2.11]{luke1969volI}, \cite[\S5.1]{olver1997},
  \cite[\S I.5]{wong2001}).
  There holds in the larger sector $\cB$, for some rational numbers $c_j$ related
  to Bernoulli numbers, and which we do not need to know explicitly for our aims:
  \begin{align}\label{eq:stirling}
    \log \Gamma(z) &= (z - \frac12) \Log z - z + \frac12\log(2\pi) +
    \sum_{j=1}^{J-1} c_j z^{-j} + r_J(z)
\\\notag
  r_J(z) &= O_{|z|\to\infty}(|z|^{-J})
  \end{align}
  On the right-hand side of Equation \eqref{eq:stirling}, we use the principal
  branch of the logarithm.  The left-hand side is the determination of $\log
  \Gamma(z)$ which is real for $z>0$, i.e.\@ it is the one vanishing at $z=1$.
  We pick $z$ in the smaller angular sector $|\Arg z| \leq \pi - \eta$, as in
  the statement of the Theorem, and verifying $|z|\geq A$ where $A\geq 2$ was
  specified earlier, so that if $\max(|\alpha|^2,|\beta|^2)\leq c_1|z|$, then
  both $z+\alpha$ and $z+\beta$ are in a closed disk centered at $z$ entirely
  contained in the larger sector $\cB$ and the asymptotic expansion applies
  for $z+\alpha$ (in inverse powers of $z+\alpha$) and at $z+\beta$ (in
  inverse powers of $z+\beta$).

  The remainder term in Equation \eqref{eq:stirling} verifies $r_J(z) =
  O(|z|^{-J})$. Hence (using this in a larger sector and the Cauchy formula to
  compute derivatives as contour integrals)
  we also have $r_J'(z)= O(|z|^{-J})$. But applying this to $J+1$ we get
  $r_{J+1}'(z) = O(|z|^{-J-1})$ and, comparing with $r_J'(z)$, we find for the
  latter that it also is $O(|z|^{-J-1})$ (self-improving stops here...).  As
  any point $z'$ of the segment $\Iff{z+\alpha}{z+\beta}$ is such that
  $|z'-z|\leq \frac12|z|$, there holds $|z'|\geq \frac12|z|$ and we have
  uniformly $|r_J'(z')|=O(|z'|^{-J-1}) = O(|z|^{-J-1})$ on that segment, so
  $r_J(z+\alpha)-r_J(z+\beta) = (\alpha-\beta)O(|z|^{-J-1})$.
   Of course, this argument would become unnecessary if we were using 
the well-known explicit form of $r_J(z)$ as an integral remainder with a Bernoulli polynomial (\cite[4.1]{olver1997}, \cite[p.\@ 52]{wong2001}).

  The function $\phi(w) = (w-\frac12)\Log w -w$ is analytic and its Taylor
  series at $z$ has a radius of convergence equal to $|z|$. The derivative is
  $\phi'(w) = \Log w - 1/(2w)$.  The Taylor series represents the function in
  the connected component of $z$ inside $\{z', |z'-z|<|z|,
  z'\not\in(-\infty,0]\}$.  Hence, as
  $\Iff{z}{z+\alpha}$ is entirely contained in that
  connected component, there holds (recalling also $|\alpha|\leq \frac12|z|$):
  \begin{align*}
    \phi(z+\alpha) &= \phi(z) + \alpha \Log z - \frac{\alpha}{2z} + \sum_{j=2}^\infty
    (-1)^j \alpha^j \Bigl(\frac{1}{j(j-1)z^{j-1}} + \frac{1}{2j z^j}\Bigr)
\\
&=
\phi(z) + \alpha \Log z
     + \sum_{j=1}^\infty (-1)^{j}\frac{(j+1)\alpha^j-
       2\alpha^{j+1}}{2j(j+1)z^j}\,.
\end{align*}
Let
\begin{equation*}
  M = \max(1, |\alpha|, |\beta|).
\end{equation*}
We know that $M\leq \frac12|z|$.
We consider $\phi(z+\alpha)-\phi(z+\beta)$, factorizing out $\alpha-\beta$. We estimate
for $j\geq J\geq 1$:
\begin{equation*}
  \left|\frac{(j+1)\frac{\alpha^j-\beta^j}{\alpha-\beta}-2\frac{\alpha^{j+1}-\beta^{j+1}}
     {\alpha-\beta}}{2j(j+1)z^j}\right|
   \leq \frac{jM^{j-1}+2M^j}{2j|z|^j} \leq \frac{3M^{j}}{2|z|^j}\leq \frac{3M^{J}}{2|z|^J}2^{-(j-J)}
\end{equation*}
Summing for $j\geq J$ gives thus a value $\leq 3M^{J}|z|^{-J}$.  We thus obtain, with $t_j$ a
certain polynomial of total degree at most $j$, for $1\leq j \leq J-1$:
\begin{gather*}
 \phi(z+\alpha)-\phi(z+\beta) - (\alpha - \beta)\Log z 
= (\alpha-\beta)\Biggl(\sum_{j=1}^{J-1}\frac{t_j(\alpha,\beta)}{z^j} + O\Bigl(\frac{M^{J}}{|z|^J}
\Bigr)\Biggr)\,.
\end{gather*}
The earlier $(\alpha-\beta)O(|z|^{-J-1})$ bound for ${r_J(z+\alpha)-r_J(z+\beta)}$ is negligible in comparison to $(\alpha-\beta)
O\bigl(\frac{M^{J}}{|z|^J}\bigr)$ and collecting our estimates we have:
\begin{gather*}
\frac{\Gamma(z+\alpha)z^{\beta-\alpha}}{\Gamma(z+\beta)} = \e^{(\alpha-\beta)S}\,,
\\
\shortintertext{with}
S = \sum_{j=1}^{J-1}\frac{t_j(\alpha,\beta)}{z^j}
  + \sum_{j=1}^{J-1} c_j \frac{(z+\alpha)^{-j}-(z+\beta)^{-j}}{\alpha-\beta}
   + O\Bigl(\frac{M^{J}}{|z|^J}\Bigr)\,.
\end{gather*}
Using for each $j$ the binomial series to expand $(1+ \alpha z^{-1})^{-j}$ and
$(1+\beta z^{-1})^{-j}$ in inverse powers of $z$, in order to handle
$\frac{(z+\alpha)^{-j}-(z+\beta)^{-j}}{\alpha-\beta}$, and estimating as was
done previously the remainders of truncated series, using again $M = \max(1,
|\alpha|,|\beta|)\leq \frac12|z|$ to that purpose, we obtain:
\begin{equation*}
  S = \sum_{j=1}^{J-1}\frac{T_j(\alpha,\beta)}{z^j} + O\Bigl(\frac{M^{J}}{|z|^J}\Bigr)\,,
\end{equation*}
where $T_j$, $1\leq j\leq J-1$, is a polynomial of total degree at most $j$.
In particular with $J=1$, we obtain $S = O\bigl(\frac{M}{|z|}\bigr)$.  We note
for future reference that $S^J = O\bigl(\frac{M^J}{|z|^J}\bigr)$.  We also
note that the product $(\alpha-\beta)S$ is $O(M^2/|z|)$ hence $O(1)$.  Thus
with some implied constant depending only on $c_1$, $\eta$, and $J$:
\begin{align*}
  \e^{(\alpha-\beta) S} &= 1 + (\alpha-\beta) S + \frac{(\alpha-\beta)^2}2S^2 + \dots +
\frac{(\alpha-\beta)^{J-1}}{(J-1)!} S^{J-1} +
  O(|\alpha-\beta|^J S^J)
\\
&= 1 + \sum_{1\leq j<J} \frac{(\alpha-\beta)^{j}}{j!} S^{j} +
  O\Bigl(|\alpha-\beta|^JM^{J} |z|^{-J}\Bigr)
\\
&= 1 + \sum_{1\leq j<J} \frac{(\alpha-\beta)^{j}}{j!} S^{j} +
  O\Bigl(|\alpha-\beta|M^{2J-1} |z|^{-J}\Bigr)\,.
\end{align*}
Consider now the various contributions originating in some given
power $S^j$, $j<J$. Those not involving the big-$O$ term in $S$ will be of the type
\begin{equation*}
  (j!)^{-1}(\alpha-\beta)^{j}T_{i_1}T_{i_2}... T_{i_j} z^{-i_1 - i_2 - \dots - i_j}.
\end{equation*}
If $d=i_1+i_2+\dots+i_j<J$, we get a contribution $c_d/z^d$ where $c_d$ is a
polynomial of total degree $j+d$ in $(\alpha,\beta)$.  This is at most $2d$ as all $i_p$'s are
positive.
Suppose now that $d=i_1+i_2+\dots +
i_j = J + q$ with $q\geq0$. Then, bounding each $T_{i_p}$, and recalling
$M=O(\sqrt{|z|})$ we get
\begin{equation*}
  O\Bigl(|\alpha-\beta|^{j}M^{J+q}|z|^{-J-q}\Bigr)
\subset O\Bigl(|\alpha-\beta|^{j}M^{J-q}|z|^{-J}\Bigr)
\end{equation*}
And $j+J-q\leq j+J<2J$.  So this is
$O(|\alpha-\beta|M^{2J-2}/|z|^J)$; this is even better
than the exponent $2J-1$, which thus originates solely from the
$O(|\alpha-\beta|^JS^J)$ contribution to $\exp((\alpha-\beta)S)$.

To handle the terms from $S^j$ involving one or more times the big-$O$ in $S$,
we use the notation $|T_J|= M^J$.  So we need to handle now
$|\alpha-\beta|^j|T_{i_1}|\allowbreak|T_{i_2}|\allowbreak...\allowbreak|T_{i_k}|
|z|^{-i_1 - i_2 - \dots i_k}$, but with some of the indices being $J$. The
exact same reasoning as before in the case $d=J+q$, $q\geq0$, applies.  This
completes the proof.
\end{proof}

\begin{rema}
  These asymptotic expansions are, naturally, well-known.
  \textsc{N\"{o}rlund} obtained in \cite[p.\@ 11]{norlund1910} (which was his
  Dissertation) an asymptotic expansion $\Gamma(z)z^\beta/\Gamma(z+\beta)\sim
  \sum_{j=0}^\infty c_j(\beta)z^{-j}$ for $z$ going to infinity in an angular
  sector avoiding the negative real axis, but (presumably in view of his later
  papers) this is for a \emph{fixed}, or at least \emph{bounded} $\beta$.  See
  also \cite[p.\@ 367]{norlund1914} and \cite[p.\@ 43]{norlund1961} where
  again $\beta$ appears to be fixed or bounded.  \textsc{Tricomi}
  and \textsc{Erdélyi} (\cite{tricomi1951}) generalized \textsc{N\"{o}rlund}
  investigations and considered the asymptotics of
  $\Gamma(z+\alpha)/\Gamma(z+\beta)$, popularizing in passing a tool called
  \emph{Watson's lemma} (see an account of this in \cite[\S4.5]{olver1997}).
  Again, the parameters $\alpha$ and $\beta$ there are bounded (this is
  explicitly stated at the start of the second paragraph of their paper, and
  again in an example at the end of the paper).
  See also \cite[\S2.11]{luke1969volI} and \cite[\S5.1]{olver1997}.
  A related expansion was given by \textsc{Fields} \cite{fields1966}.
\end{rema}
\begin{rema}
  In the works by the above quoted authors, it is shown (\cite[Eq.\@
  (19)]{tricomi1951}) that the $Q_j$'s are related to the \emph{generalized
    Bernoulli numbers and polynomials}, which where studied in particular by
  \textsc{N\"{o}rlund} (\cite{norlund1914, norlund1924}). There holds
  \begin{equation*}
    Q_j(\alpha,\beta)=(-1)^j\frac{(\beta-\alpha)_j}{j!}B_j^{(\alpha-\beta+1)}(\alpha)\,,
  \end{equation*}
  where $(\beta-\alpha)_j$ is the ascending partial
  factorial, and (\cite{norlund1924, norlund1961}):
  \begin{equation*}
    \frac{t^\rho\e^{xt}}{(\e^t-1)^\rho} 
= \sum_{j\geq0} B_j^{(\rho)}(x)\frac{t^j}{j!}\qquad(|t|<2\pi)\,.
  \end{equation*}
\end{rema}
\begin{rema}\label{rem:Pja}
  In the sequel we will encounter a situation where we need $P_{j,a}(t) =
  Q_j(a, a +t)$ where $a$ is fixed and $t$ is a quantity of interest with
  large imaginary part.  Their
  recurrence, from Equation \eqref{eq:recQbeta}, reads:
\begin{equation*}
  P_{j,a}(t+1) - P_{j,a}(t) = - (t+a) P_{j-1,a}(t+1),
\end{equation*}
with initial conditions $P_{0,a}=1$, and $P_{j,a}(0)=0$ for $j>0$.  
In terms of generalized Bernoulli polynomials (\cite{norlund1924})
$P_{j,a}(t)=(-1)^j\frac{(t)_j}{j!}B_{j}^{(1-t)}(a)$.
\end{rema}

\section{Contour integrals}

Let us recall the following contour integral, which goes under the name of
\emph{Rice formula} (already found in \cite[Ch.\@ 8]{norlund1924}; see \cite[Ex.\@
5.2.2-54, p.\@ 138]{knuth}, \cite[Eq.\@ (2)]{flajosedge1995}, \cite[Thm.\@
8.20]{szpbook2001}, \cite[\emph{N\"{o}rlund-Rice integrals}, p.\@
238]{flajosedgebook}):
\begin{lem}
  Let $f$ be analytic on some domain containing the segment $[i,k]$ where
  $0\leq i\leq k\leq n$ are integers. Let $\cC$ be a counter-clockwise Jordan
  contour which is included, together with the region it bounds, in that
  domain and which goes through none of the integers $j\in \Iff{0}{n}$.
  Assume further that the integer points in $\Iff{0}{n}$ which are in the
  region bounded by $\cC$ are $j=i$, \dots, $k$.  Then (the letter $i$ being
  used for two purposes):
  \begin{equation*}
    \sum_{j=i}^k(-1)^j\binom{n}{j}f(j) = \frac1{2\pi i}\int_{\cC}\frac{(-1)^nn! f(z)}{z(z-1)\dots(z-n)}\dz.
  \end{equation*}
\end{lem}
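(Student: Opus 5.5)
The plan is the residue theorem applied to the meromorphic integrand
\begin{equation*}
  F(z)=\frac{(-1)^nn!\,f(z)}{z(z-1)\dots(z-n)}\,.
\end{equation*}
Since $f$ is analytic on a domain containing $\cC$ together with the region it bounds, $F$ is analytic there except possibly at the points $0,1,\dots,n$. By hypothesis $\cC$ passes through none of these points. It encloses exactly $j=i,\dots,k$, each with winding number one because $\cC$ is a counter-clockwise Jordan curve. The residue theorem therefore gives
\begin{equation*}
  \frac1{2\pi i}\int_\cC F(z)\,\dz=\sum_{j=i}^k\Res_{z=j}F\,.
\end{equation*}
The remaining task is to compute these residues.

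Each pole $z=j$ is simple, since the factors $z-j$ are distinct and $f$ is analytic at $j$. So
\begin{equation*}
  \Res_{z=j}F=\frac{(-1)^nn!\,f(j)}{\prod_{0\leq p\leq n,\,p\neq j}(j-p)}\,.
\end{equation*}
The factors with $p<j$ give $\prod_{p<j}(j-p)=j!$. The factors with $p>j$ give $\prod_{p>j}(j-p)=(-1)^{n-j}(n-j)!$. Hence the residue equals
\begin{equation*}
  (-1)^n(-1)^{n-j}\frac{n!}{j!\,(n-j)!}f(j)=(-1)^j\binom nj f(j)\,,
\end{equation*}
using $(-1)^{2n-j}=(-1)^j$. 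Summing over $j=i,\dots,k$ yields the stated identity.

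There is no real obstacle here. The only points needing care are the sign bookkeeping in the product and checking that analyticity of $f$ on the closed region bounded by $\cC$ is exactly what the residue theorem requires.
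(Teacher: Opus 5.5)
Your proposal is correct and follows the paper's proof: the residue theorem applied to the integrand, with the residue at each enclosed $j$ computed as $(-1)^j\binom{n}{j}f(j)$ via the same sign bookkeeping. One small point: the pole at $j$ is \emph{at most} simple (it is removable if $f(j)=0$), but your residue formula holds either way.
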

\begin{proof}
  The residue at $j\in\Iff{i}{k}$ is $\frac{(-1)^nn!f(j)}{j(j-1)\dots
    1\cdot(-1)\dots(-(n-j))} = (-1)^j\binom{n}{j}f(j)$.
\end{proof}
Suppose that $f$ is analytic in the full strip $i-\frac34<\Re z<k+\frac34$ and
has polynomial growth there as $|\Im z|\to\infty$.  We apply the Lemma with a
rectangular contour with vertices at $k+\frac12-iT, k+\frac12+iT,
i-\frac12+iT, i-\frac12-iT$ and then let $T\to+\infty$. This gives, for $n$
large enough:
\begin{equation*}
  \frac1{2\pi i}\left(\int_{k+\frac12-i\infty}^{k+\frac12+i\infty} - \int_{i-\frac12-i\infty}^{i-\frac12+i\infty}\right)\frac{(-1)^nn! f(z)}{z(z-1)\dots(z-n)}\dz.
\end{equation*}
Let us suppose now that $f$ is actually analytic in the entire half-plane
$\Re z > i-\frac34$ and has polynomial growth in this half-plane as
$|z|\to\infty$, and that $k=n$, so that there are no singularities to the
right of $\Re z = k+\frac12$.  We can then replace the
segment $\Iff{k+\frac12-iT}{k+\frac12+iT}$ by the half-circle with center at
$k+\frac12$ joining these extremities.  Using the polynomial growth condition
on this half-circle and the two remaining half-lines, and letting
$T\to+\infty$, we obtain that for large enough $n$ the integral on $\Re z =
n+\frac12$ vanishes.  So under these conditions, we obtain
\begin{equation*}
  \sum_{j=i}^n(-1)^j\binom{n}{j}f(j) = 
- \frac1{2\pi i}\int_{i-\frac12-i\infty}^{i-\frac12+i\infty}
  \frac{(-1)^nn! f(z)}{z(z-1)\dots(z-n)}\dz.
\end{equation*}
This remains valid with a summation up to $k<n$ if $f$ vanishes at $k+1$, \dots, $n$.

A particularly useful case is given by the exponential function $z\mapsto x^z$,
$x>0$ which, depending on whether $x>1$ or $x<1$ is small either for $\Re
z\to-\infty$ or for $\Re z\to+\infty$, and, for $x=1$ is bounded, actually
constant, in the entire complex plane.  Choosing $i=0$, $k=n$, and $f(z)=x^z$
with $0<x\leq 1$, we obtain in this special case:
\begin{equation*}
   \sum_{j=0}^n(-1)^j\binom{n}{j}x^j =
  \frac{-1}{2\pi i}\int_{-\frac12-i\infty}^{-\frac12+i\infty}
  \frac{(-1)^nn! x^z}{z(z-1)\dots(z-n)}\dz.
\end{equation*}
This is valid for any $n>0$ (even if $x=1$).  Let us
replace $z$ by $-z$. We then obtain:
\begin{equation*}
   x\leq 1\implies (1 -x)^n = 
  \frac{1}{2\pi i}\int_{\frac12-i\infty}^{\frac12+i\infty}
  \frac{n! x^{-z}}{z(z+1)\dots(z+n)}\dz.
\end{equation*}
The integral is absolutely convergent. In analytic number theory, it is
important to also consider the case $n=0$ which gives a semi-convergent
integral (if done symmetrically to real axis), but we don't need this here.
For $x>1$, we can shift the line of integration to larger real parts, we
conclude that the integral represents zero. We have established the following
\emph{Mellin-Perron} formula
\begin{equation}\label{eq:mellin}
   \frac{1}{2\pi i}\int_{\frac12-i\infty}^{\frac12+i\infty}
  \frac{n! x^{-z}}{z(z+1)\dots(z+n)}\dz =
  \begin{cases}
    (1-x)^n& 0<x\leq 1,\\
    0 &1\leq x\,.
  \end{cases}
\end{equation}
Such identity is the inversion formula (as in \emph{Fourier inversion}; use
the change of variable $x=\e^{u}$)
associated with the value of
$\int_0^1(1 - x)^n x^{z-1}\dx$, which itself is related to the identity:
\begin{equation*}
   \sum_{k=0}^n \frac{(-1)^k \binom{n}{k}}{z+k} = \frac{n!}{z(z+1)\cdots(z+n)}\,.
\end{equation*}
In analytic number theory, \emph{Perron formulas} allow to
investigate the summatory function $\sum_{k<x} a_k +\frac12\delta_{x}(k)a_k$,
or smoother variants such as
$\sum_{k\leq x} (1 - k/x)^n a_k$, from the properties of the Dirichlet
series $\sum a_k k^{-z}$, and are often formulated as integrals
for $|\Im z|\leq T$ with an error bound (cf.\@ \cite[3.19]{titchmarsh},
\cite[II.2]{tenenbaumGSM}).  But we do not need such refinements for our
purposes.

References on this
topic in the field of combinatorics and theoretical computer science include
\cite{flajosedge1995}, \cite[\S 7.5.3]{szpbook2001}, \cite[App.\@
B]{flajosedgebook}, and of course \cite[5.2.2]{knuth}.

\section{Mellin transforms involving the Riemann zeta function}

Using Equation \eqref{eq:mellin} as the crucial tool, we
express $e_m$ analytically.  Equation \eqref{eq:empm} gives
$e_m$ in terms of the moments $p_m(x) =  x^{-1}\sum_{0\leq k <x} (x^{-1}k)^m$
for $x=2^l$, $l\geq0$.
We observe that for $m\geq0$ and with $x$ a positive \emph{integer}:
\begin{equation*}
  x^{-1}\sum_{0\leq k <x} (x^{-1}k)^m = x^{-1}\sum_{0<k\leq x}(1 - x^{-1}k)^m \,.
\end{equation*}
Dropping now the condition that $x$ is an integer, we reformulate using
Equation \eqref{eq:mellin} the above right-hand side, for $m>0$, as a formally
infinite sum, which has in truth only finitely many non-zero terms:
\begin{equation*}
  x^{-1}\sum_{0<k\leq x}(1 - x^{-1}k)^m  =
\sum_{k=1}^\infty \frac{x^{-1}}{2\pi i}\int_{\frac12-i\infty}^{\frac12+i\infty}
  \frac{m! x^{z}k^{-z}}{z(z+1)\dots(z+m)}\dz\,.
\end{equation*}
We have assumed $m\geq1$ here so that Equation \eqref{eq:mellin} applies as
is, and the integrand, for each term,  is absolutely integrable on the $\Re z=\frac12$ line.
In order to move the summation inside the integrand, we first shift (for each
term) the line of integration to $\Re z = 2$, which keeps all terms unchanged,
then we can permute. Thus, for any $x>0$ (integer or not):
\begin{equation*}
  x^{-1}\sum_{0<k\leq x}(1 - x^{-1}k)^m  = \frac{x^{-1}}{2\pi i}\int_{2-i\infty}^{2+i\infty}
  \frac{m! x^{z}\zeta(z)}{z(z+1)\dots(z+m)}\dz\,.
\end{equation*}
It is known that $\zeta(z)=O(|\Im z|^{\frac14+\epsilon})$ for $\frac12\leq \Re
z\leq 2$ (and $|z-1|\geq \frac14$), for any $\epsilon>0$: the Lindelöf
conjecture says that we can even remove the $\frac14$.  Exponents $<\frac14$
such as $\frac16$ are known to hold (\emph{sub-convexity bounds},
\cite[Thm.\@ 5.5]{titchmarsh}).  For $m$ large enough, we  only need
the polynomial growth property of $\zeta(s)$ on vertical strips of finite
width, which is elementarily provable, but the $O(|\Im
z|^{\frac14+\epsilon})$ allows to shift the line of
integration back to $\Re z=\frac12$, for every $m\geq1$.
This process keeps an absolutely integrable integrand, and picks up a residue at $z=1$:
\begin{equation*}
  x^{-1}\sum_{0<k\leq x}(1 - x^{-1}k)^m  = \frac{1}{m+1} + 
   \frac{1}{2\pi i}\int_{\frac12-i\infty}^{\frac12 +i\infty}
  \frac{m! x^{z-1}\zeta(z)}{z(z+1)\dots(z+m)}\dz\,.
\end{equation*}
This is excellent news, as it provides, for any positive real $x$, and positive integer $m$, the formula:
\begin{equation}
  \label{eq:pmmellin}
  1 - (m+1)x^{-1}\sum_{0<k\leq x}(1 - x^{-1}k)^m = \frac{-1}{2\pi i}\int_{\frac12-i\infty}^{\frac12 +i\infty}
  \frac{(m+1)! x^{z-1}\zeta(z)}{z(z+1)\dots(z+m)}\dz\,,
\end{equation}
which, combined with Equation \eqref{eq:empm} gives:
\begin{equation*}
  e_m = - \sum_{l=0}^\infty \frac{1}{2\pi i}\int_{\frac12-i\infty}^{\frac12 +i\infty}
  \frac{(m+1)! (2^l)^{z-1}\zeta(z)}{z(z+1)\dots(z+m)}\dz\,.
\end{equation*}
As on the line of integration $\Re(z-1) = - \frac12$, there is no difficulty in permuting the summation and the integral, and we obtain the exact representation:
\begin{lem}
  For positive integer $m$, there holds (mind the sense of integration
  downwards from $\frac12+i\infty$ to $\frac12-i\infty$):
  \begin{equation}
    \label{eq:emmellin}
     e_m = \frac{1}{2\pi i}\int_{\frac12+i\infty}^{\frac12 -i\infty}
  \frac{(m+1)! \zeta(z)}{z(z+1)\dots(z+m)}\frac{2^{1-z}}{2^{1-z} - 1}\dz\,.
  \end{equation}
\end{lem}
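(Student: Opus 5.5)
The representation follows from Equation~\eqref{eq:empm} and Equation~\eqref{eq:pmmellin}, applied with $x=2^l$, and then summing the resulting integrals over $l$.

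\emph{Step 1: one term at a time.} For $x=2^l$ a positive integer, reindexing $k\to x-k$ gives
\[
(m+1)p_m(2^l)=(m+1)2^{-l}\sum_{0<k\leq 2^l}(1-2^{-l}k)^m .
\]
Then Equation~\eqref{eq:pmmellin} yields
\[
1-(m+1)p_m(2^l)=\frac{-1}{2\pi i}\int_{\frac12-i\infty}^{\frac12+i\infty}\frac{(m+1)!\,2^{l(z-1)}\zeta(z)}{z(z+1)\cdots(z+m)}\,\dz .
\]
This identity uses the shift of the line of integration from $\Re z=2$ back to $\Re z=\frac12$. That shift is legitimate for every $m\geq1$ because $\zeta(z)=O(|\Im z|^{1/4+\epsilon})$ on the strip, while the denominator decays at least like $|z|^{-2}$. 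The only residue crossed is the one at $z=1$, which contributes $1/(m+1)$.

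\emph{Step 2: interchange sum and integral.} Next sum over $l\geq0$. On $\Re z=\frac12$ we have $|2^{l(z-1)}|=2^{-l/2}$. Hence
\[
\sum_{l\geq0}\int_{\Re z=\frac12}\Bigl|\frac{(m+1)!\,2^{l(z-1)}\zeta(z)}{z(z+1)\cdots(z+m)}\Bigr|\,|\dz|
\leq \frac{1}{1-2^{-1/2}}\int_{\Re z=\frac12}\frac{(m+1)!\,|\zeta(z)|}{|z(z+1)\cdots(z+m)|}\,|\dz|<\infty .
\]
The integral is finite for $m\geq1$, by the same growth bound on $\zeta$ with the factor $|z|^{-2}$. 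Fubini therefore allows the interchange, and the geometric series gives
\[
\sum_{l\geq0}2^{l(z-1)}=\frac{1}{1-2^{z-1}}=\frac{2^{1-z}}{2^{1-z}-1},
\]
which converges since $|2^{z-1}|=2^{-1/2}<1$.

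\emph{Step 3: orientation.} Reversing the direction of integration absorbs the minus sign, giving Equation~\eqref{eq:emmellin}.

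The only delicate point is the absolute integrability on $\Re z=\frac12$ for small $m$, in particular $m=1$. This rests on the convexity bound $\zeta(\frac12+it)=O(|t|^{1/4+\epsilon})$, which is already invoked above. Everything else is routine.
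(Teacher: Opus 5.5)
Your proof is correct and follows the paper's route. You apply \eqref{eq:pmmellin} at $x=2^l$, sum over $l$ via \eqref{eq:empm}, and interchange sum and integral because $|2^{l(z-1)}|=2^{-l/2}$ on $\Re z=\frac12$ and the integrand is absolutely integrable there. You then sum the geometric series and absorb the sign by reversing the orientation; the only difference is that you spell out the Fubini justification, which the paper dispatches by saying there is ``no difficulty in permuting'' on $\Re(z-1)=-\frac12$.
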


\section{Complete asymptotics for \texorpdfstring{$e_m$}{em}}

Let us from now on use the notation (only for $k$ a non zero relative integer):
  \begin{equation*}
    \chi_k = \frac{2\pi i k}{\log 2}\,.
  \end{equation*}
  We shift in Equation \eqref{eq:emmellin} the line of integration to
  $\Re(z)=2$, picking up residues at $z=1$ (double pole) and at the $z = 1 +
  \chi_k$'s, $k\in \ZZ\setminus\{0\}$.  To be more specific, we consider a
  rectangular contour with sides parallel to the axes and having its two horizontal
  segments passing midway between $1+\chi_{\pm K}$ and $1+\chi_{\pm(K+1)}$. We
  then let
  let $K\to \infty$.  Observe that the $2^{1-z}$ is bounded, as well as is
  inverse, uniformly on such contours.  Once this is done, we can move the
  line of integration farther and farther to the right, and as $\zeta(z)$ is
  bounded for $\Re z\geq 2$, we see that this contribution actually vanishes.
  So $e_m$ is equal to the sum of the residues, which we pick up positively,
  because of the downwards direction of integration when still on the left of
  the poles:
  \begin{equation*}
    e_m =
    \begin{aligned}[t]
      &\Res_{z=1} \frac{(m+1)! \zeta(z)}{z(z+1)\dots(z+m)}\frac{2^{1-z}}{2^{1-z} - 1}
\\
      &-(\log 2)^{-1}\sum_{\substack{k\in\ZZ\\k\neq 0}}
\frac{(m+1)!\; \zeta(1+\chi_k)}{(1+\chi_k)(2+\chi_k)\dots(m+1+\chi_k)}\,.
    \end{aligned}
  \end{equation*}
  Using $\zeta(1+h) = \frac1{h}\bigl(1 + \gamma h + O(h^2)\bigr)$, we compute
  that the residue at $z=1$ is equal to $(\log 2)^{-1} H_{m+1} + \frac12 -
  (\log 2)^{-1}\gamma$.  Hence the following theorem:
  \begin{theo}\label{thm:emexact}
    For any positive integer $m$, there holds:
    \begin{equation}
     \label{eq:emexact}
      e_m = \frac{H_{m+1}}{\log 2} + \frac12 - \frac{\gamma}{\log 2} 
           -(\log 2)^{-1}\sum_{\substack{k\in\ZZ\\k\neq 0}}r_k(m)\,,
\end{equation}
with
\vspace*{-\baselineskip}
\begin{align}
\label{eq:rkm1}
     r_k(m) &= \frac{(m+1)!\; \zeta(1+\chi_k)}{(1+\chi_k)(2+\chi_k)\dots(m+1+\chi_k)}
\\\label{eq:rkm2}
          &= \frac{\Gamma(m+2)}{\Gamma(m+2+\chi_k)}\Gamma(1+\chi_k)\zeta(1+\chi_k).
\end{align}
  \end{theo}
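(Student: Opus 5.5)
The plan is to start from the exact representation \eqref{eq:emmellin} and shift the vertical line of integration from $\Re z=\frac12$ to the right, say to $\Re z=2$, collecting residues. The integrand is
\begin{equation*}
F(z)=\frac{(m+1)!\,\zeta(z)}{z(z+1)\cdots(z+m)}\,\frac{2^{1-z}}{2^{1-z}-1}.
\end{equation*}
In the strip $\frac12\le\Re z\le 2$ its only singularities are at the zeros of $2^{1-z}-1$, namely $z=1$ and $z=1+\chi_k$ for $k\neq0$. At $z=1$ the pole of $\zeta$ combines with the simple zero of $2^{1-z}-1$ to give a double pole; the points $1+\chi_k$ are simple poles, since $\zeta$ is regular there.

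To justify the shift, I would use rectangles whose horizontal sides sit at heights $\Im z=\pm\pi(2K+1)/\log2$, midway between consecutive $\chi_k$. On these sides $2^{1-z}$ equals a negative real multiple of $2^{1-\Re z}$, so $2^{1-z}/(2^{1-z}-1)$ is bounded uniformly. Combined with the polynomial growth of $\zeta$ in the strip (the $O(|\Im z|^{1/4+\epsilon})$ bound already quoted) and the decay $|z\cdots(z+m)|^{-1}=O(|\Im z|^{-m-1})$ with $m\ge1$, the horizontal contributions tend to $0$ as $K\to\infty$. The integral on $\Re z=2$ vanishes: the factor $2^{1-z}/(2^{1-z}-1)$ is bounded for $\Re z\ge 2$ and $\zeta$ is bounded there, so the line can be pushed to $\Re z=\sigma\to+\infty$, where the rational factor decays like $\sigma^{-m-1}$ while nothing else grows. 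Because \eqref{eq:emmellin} is integrated downward on the left of the poles, $e_m$ equals the sum of all residues taken with a plus sign. The residue series converges absolutely because $\Gamma(m+2)/\Gamma(m+2+\chi_k)=O(|\chi_k|^{-m-1})$.

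For the simple poles, I would use $2^{1-z}-1=-(\log2)(z-1-\chi_k)+\dots$ and $2^{1-z}=1$ at $1+\chi_k$. The residue is then $-(\log2)^{-1}r_k(m)$ with $r_k(m)$ as in \eqref{eq:rkm1}. The form \eqref{eq:rkm2} follows from $(1+\chi)\cdots(m+1+\chi)=\Gamma(m+2+\chi)/\Gamma(1+\chi)$.

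The double pole at $z=1$ is the computational step, and the one place where constants must be tracked carefully. Write $z=1+h$ and expand each factor to first order:
\begin{itemize}
\item $\zeta(1+h)=h^{-1}(1+\gamma h+O(h^2))$;
\item $2^{-h}/(2^{-h}-1)=-\bigl((\log 2)h\bigr)^{-1}\bigl(1-\tfrac{\log2}{2}h+O(h^2)\bigr)$;
\item $(m+1)!/\bigl((1+h)\cdots(m+1+h)\bigr)=1-H_{m+1}h+O(h^2)$.
\end{itemize}
The coefficient of $h^{-1}$ in the product is $-(\log2)^{-1}\bigl(\gamma-\tfrac{\log2}{2}-H_{m+1}\bigr)$, which equals $(\log2)^{-1}H_{m+1}+\frac12-(\log2)^{-1}\gamma$. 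Summing this with the simple-pole residues gives \eqref{eq:emexact}. The main obstacle is only the contour justification (the uniform bound on the horizontal segments), and the midway choice of heights handles it.
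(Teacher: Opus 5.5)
Your proposal is correct and follows essentially the same route as the paper: shift \eqref{eq:emmellin} to $\Re z=2$ through rectangles whose horizontal sides pass midway between consecutive poles $1+\chi_k$, kill the $\Re z=2$ integral by pushing it further right, collect all residues with a plus sign because of the downward orientation, and expand $\zeta(1+h)$, $2^{-h}/(2^{-h}-1)$ and the rational factor to get the double-pole residue $(\log2)^{-1}H_{m+1}+\frac12-(\log2)^{-1}\gamma$.

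One small slip: $\Gamma(m+2)/\Gamma(m+2+\chi_k)$ is not $O(|\chi_k|^{-m-1})$; it actually grows exponentially in $|k|$. The quantity that decays is $(m+1)!/\prod_{j=1}^{m+1}(j+\chi_k)=\Gamma(m+2)\Gamma(1+\chi_k)/\Gamma(m+2+\chi_k)$, and together with $\zeta(1+\chi_k)=O(\log(1+|k|))$ it gives the absolute convergence you want.
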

  \begin{rema}
    As we mentioned in the first section of the paper, this result
    appeared already in \cite[p.\@ 113]{flajosedge1995}.%
\footnote{The quantity $V_n$ there is $1 - e_{n-1}$; 
the bottom of the page is actually about $-V_n$ not $V_n$.}
  \end{rema}
  Approximating, formally at first, the ratio of Gamma functions by
  $(m+2)^{-\chi_k} = \exp\bigl(-2\pi i k \log_2(m+2)\bigr)$, the oscillating
  term is here provided as a Fourier series in $\log_2(m+2)$, not $\log_2(m)$.
  But, the function $\phi$ of Equation \eqref{eq:thm2phiseries} being smooth
  and periodic, $\phi(\log_2(m+2)) = \phi(\log_2(m)) + O(m^{-1})$, so there is
  no contradiction, fortunately.

  Examining Equation \eqref{eq:rkm2} under the light of Theorem
  \ref{thm:gammaratio} we have the choice of which quantity we want to expand
  in inverse powers of, with periodic decorations.  The simplest choice is in
  inverse powers of $m+2$.  But we can choose to expand in inverse powers of
  $m$, or of $m+1$, or \dots, as we like. We will start with $m+2$ and
  explain later how to do otherwise.

  First, we use Equation \eqref{eq:rkm1} for some sub-optimal but easy estimate on
  the series of residues:
  \begin{lem}
    For any positive integer $K$, there holds
    \begin{equation*}
      \sum_{\substack{k\in\ZZ\\|\chi_k|> \sqrt{m}}}|r_k(m)| = o(m^{-K})
    \end{equation*}
  \end{lem}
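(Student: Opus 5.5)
We will work directly from the product form \eqref{eq:rkm1}. Write $\chi_k = i y_k$ with $y_k = 2\pi k/\log 2$ real. Then
\begin{equation*}
  |r_k(m)| = |\zeta(1+\chi_k)|\prod_{j=1}^{m+1}\Bigl|1+\frac{\chi_k}{j}\Bigr|^{-1}
  = |\zeta(1+\chi_k)|\prod_{j=1}^{m+1}\Bigl(1+\frac{y_k^2}{j^2}\Bigr)^{-1/2}.
\end{equation*}
Every factor of the product is at most $1$, so we may keep only those factors that help. For the zeta factor we use the bound $|\zeta(1+it)| = O(\log|t|)$ already quoted in Section~\ref{sec:fourier}. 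Since $|y_k|\geq 2\pi/\log 2>1$ for $k\neq0$, this gives $|\zeta(1+\chi_k)|=O(\log(1+|k|))$ uniformly in $k$.

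\textbf{Bound on the product.} First we will show that the product is super-polynomially small when $|y_k|>\sqrt m$. Keep only the indices $1\leq j\leq \lfloor\sqrt m\rfloor$. For these, $y_k^2/j^2\geq y_k^2/m \geq 1$, so each factor satisfies $(1+y_k^2/j^2)^{-1/2}\leq 2^{-1/2}$. Hence
\begin{equation*}
  \prod_{j=1}^{m+1}\Bigl(1+\frac{y_k^2}{j^2}\Bigr)^{-1/2}\leq 2^{-\lfloor\sqrt m\rfloor/2}.
\end{equation*}
This is $o(m^{-K})$ for every $K$.

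\textbf{Summability in $k$.} We still need the sum over $k$ to converge, with the total bounded by the same kind of quantity. For this we use different factors to get decay in $|k|$. Keep $j=1,2,3$ (valid since $m\geq 2$). These give
\begin{equation*}
  \prod_{j=1}^{3}\bigl(1+y_k^2/j^2\bigr)^{-1/2}\leq 6/|y_k|^3=O(|k|^{-3}).
\end{equation*}
To combine both effects we split the product. Factors with $j\leq 3$ give $O(|k|^{-3})$. Factors with $4\leq j\leq\lfloor\sqrt m\rfloor$ give $2^{-(\lfloor\sqrt m\rfloor-3)/2}$. The remaining factors are bounded by $1$. This yields, uniformly for $|\chi_k|>\sqrt m$,
\begin{equation*}
  |r_k(m)|\leq C\,\frac{\log(1+|k|)}{|k|^3}\,2^{-\sqrt m/2}.
\end{equation*}
Summing over all $k\neq0$ gives a convergent series times $2^{-\sqrt m/2}$, which is $o(m^{-K})$ for every $K$.

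\textbf{Where the care is needed.} There is no real obstacle. The only point to watch is not to lose uniformity: a single factor-counting bound must give both the decay in $k$ and the smallness in $m$, and the splitting of the index range above achieves this.

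An alternative route would go through \eqref{eq:rkm2} with Stirling's formula and the estimate $|\Gamma(1+\chi_k)|\approx e^{-\pi|y_k|/2}$. We prefer the elementary product bound because it avoids any uniformity questions in the Gamma ratio. That uniformity is exactly the regime $|\chi_k|^2>m$ that Theorem~\ref{thm:gammaratio} excludes, which is why this lemma is needed to handle it separately.
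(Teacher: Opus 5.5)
Your proof is correct, but it gets the decay in $m$ from a different source than the paper does. Both arguments start from the product form \eqref{eq:rkm1}, use $\zeta(1+\chi_k)=O(\log(1+|k|))$, and discard factors that are at most $1$. The paper keeps a $K$-dependent but $m$-independent set of factors. For $m\geq 2K+1$ it bounds $|j+\chi_k|\geq|\chi_k|$ for $j\leq 2K+2$ and $|j+\chi_k|\geq j$ otherwise, which gives $|r_k(m)|=O(\log(1+|k|)/|k|^{2K+2})$ uniformly in $m$. All the smallness in $m$ then comes from summing only over the tail $|k|>k_0$, with $k_0\asymp\sqrt m$, which yields $O(\log m/m^{K+1/2})$. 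You instead get the $m$-decay term by term, from the roughly $\sqrt m$ factors with $j\leq\sqrt m<|\chi_k|$, each at most $2^{-1/2}$. Only three fixed factors are then needed to make the series over all $k\neq0$ converge. So the condition $|\chi_k|>\sqrt m$ serves in your proof to make many factors small, not to cut the series. Your route gives a stronger, $K$-independent bound $O(2^{-\sqrt m/2})$. This is of the same $\e^{-\eta\sqrt m}$ quality as the bound the paper proves separately for the tail of $\sum_k|P_j(\chi_k)\Gamma(1+\chi_k)\zeta(1+\chi_k)|$ in the proof of Theorem~\ref{thm:fullasymp}. The paper's route is slightly more economical: it needs no splitting of the index range, and its per-term bound is uniform in $m$, which is all that $o(m^{-K})$ requires.
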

  \begin{proof}
    We note that even with $m=1$, the series of residues is absolutely
    convergent.  This follows from $\zeta(1+\chi_k) = O(\log(1+|k|))$
    (cf. \cite[Thm.\@ 3.5]{titchmarsh}).  For $m\geq 2K+1$, lower-bounding
    $|j+\chi_k|$ by $|\chi_k|$ if $j\leq 2K+2$ and by $j$ otherwise, we get:
    \begin{equation*}
      |r_k(m)|\leq |\zeta(1+\chi_k)|\frac{(2K+2)!}{|\chi_k|^{2K+2}}
        = O\Bigl(\frac{\log (|k|+1)}{|k|^{2K+2}}\Bigr)\,.
    \end{equation*}
    Let $k_0$ be the maximal positive integer such that $|\chi_{k_0}|\leq
    \sqrt{m}$ (we assume $K$ large enough so that $k_0$ exists for $m\geq
    2K+1$). Summing over the $k$'s with $|k|>k_0$ gives a
    value which is $O(\log(|k_0|)/k_0^{2K+1})$, hence it is $o(m^{-K})$.
  \end{proof}

  Recall that for $t$ real,
  $|\Gamma(1+it)|^2=\Gamma(1+it)\Gamma(1-it)=it\frac{\pi}{\sin(\pi
    it)}=\frac{\pi t}{\sinh (\pi t)}$, hence the Gamma function has
  exponential decrease at infinity on the line $\Re z = 1$.  Thus, there is
  some $c<1$ such that $|\Gamma(1+\chi_k)\zeta(1+\chi_k)|=O(c^k)$.  We now
  establish Theorem \ref{thm:fullasymp}.
\begin{proof}[Proof of Theorem \ref{thm:fullasymp}]
  Let $R(m) = \sum_{k\neq 0, |\chi_k|^2\leq m} r_k(m)$.  By the preceding
  lemma it differs from the sum over all non-zero relative integers $k$ by an
  error which is asymptotically smaller than any inverse power of $m$.
  According to Theorem \ref{thm:gammaratio}, applied with $\alpha=0$ and
  $\beta=\chi_k$, and writing $P_j(t) = Q_j(0,t)$ (see Remark \ref{rem:Pja}
  and recall that from its defining recurrence relation $\deg P_j = 2j$)
  we have, with some implied constant in the big-$O$ depending only on $J$, for
  $k$ such that $|\chi_k|^2\leq m$:
\begin{align*}
  r_k(m) &= \frac{\Gamma(m+2)(m+2)^{\chi_k}}{\Gamma(m+2+\chi_k)}
            (m+2)^{-\chi_k}\Gamma(1+\chi_k)\zeta(1+\chi_k)
\\
&= \left(\sum_{0\leq j<J}\frac{P_j(\chi_k)}{(m+2)^j} 
   + O\bigl(\frac{|\chi_k|^{2J}}{(m+2)^J}\bigr)\right)
    \cdot(m+2)^{-\chi_k}\Gamma(1+\chi_k)\zeta(1+\chi_k).
\end{align*}
Let us define for $j\geq0$:
\begin{equation*}
  S_j(m+2) = \sum_{k,|\chi_k|^2\leq m} P_j(\chi_k)\Gamma(1+\chi_k)\zeta(1+\chi_k)(m+2)^{-\chi_k}.
\end{equation*}
There holds, recalling $|\Gamma(1+\chi_k)\zeta(1+\chi_k)|=O(c^k)$ for some $c<1$:
\begin{align*}
  R(m) &= \sum_{0\leq j<J}\frac{S_j(m+2)}{(m+2)^j} + O\bigl(\frac{\sum_{k,
      |\chi_k|^2\leq m}|k|^{2J}c^k}{(m+2)^J}\bigr)
  \\
  &=\sum_{0\leq j<J}\frac{S_j(m+2)}{(m+2)^j} + O(\frac1{(m+2)^{J}})\,.
\end{align*}
There exists $\eta>0$ such that for each $j<J$:
\begin{equation*}
  \sum_{k, |\chi_k|^2>m} \Bigl|P_j(\chi_k)\Gamma(1+\chi_k)\zeta(1+\chi_k)\Bigr|
 = O\Bigl(\sum_{k, |\chi_k|^2>m} |k|^{2j}c^k\Bigr) = o(\e^{-\eta \sqrt{m}}).
\end{equation*}
Hence, it  is a fortiori
$O((m+2)^{-J})$, so up to moving the difference into the final big-$O$, we can
replace $S_j(m+2)$ with a full sum over $k\in \ZZ$, $k\neq0$.  This completes
(after replacing $k$ by $-k$) the proof of Theorem \ref{thm:fullasymp}, for
the case of inverse powers of $m+2$.

There remains to consider the second part of Theorem \ref{thm:fullasymp},
which involves an extra real parameter $a$.  For this, we write, for $m$
large enough:
\begin{equation*}
  \frac{\Gamma(m+2)}{\Gamma(m+2+\chi_k)} = 
   \frac{\Gamma(m+2-a+a)(m+2-a)^{\chi_k}}
        {\Gamma(m+2-a + a +\chi_k)}\e^{-2\pi i k \log_2(m+2-a)}\,.
\end{equation*}
With this modified starting point, we will use Theorem
\ref{thm:gammaratio} for $z=m+2-a\to+\infty$ with $\alpha=a$ and $\beta=a +
\chi_k$. All our previous steps go through similarly.  This gives the
asymptotic expansion Equation \eqref{eq:asympgen} in inverse powers of
$m+2-a$, as claimed in Theorem \ref{thm:fullasymp}, up to the change from $k$
to $-k$.
\end{proof}

\section{Back to the exercise 6.3-34}

Let us recall from Equation \eqref{eq:pmmellin}, replacing $m+1$ by $m$ and
thus assuming $m\geq2$:
\begin{equation*}
1 - m x^{-1}\sum_{0<k\leq x}(1 - x^{-1}k)^{m-1} 
= \frac{-1}{2\pi i}\int_{\frac12-i\infty}^{\frac12 +i\infty}
  \frac{m! x^{z-1}\zeta(z)}{z(z+1)\dots(z+m-1)}\dz
\end{equation*}
There holds $\zeta(z)=O(|z|^{1+\epsilon})$ for $-\frac12\leq \Re z\leq
\frac12$ and any given $\epsilon>0$ (\cite[\S5.1]{titchmarsh}). So, for $m\geq
3$, we can shift the integration contour to $\Re z = -\frac12$, picking up
a
residue (its opposite rather due to the $-1$ factor) at the simple pole at $z=0$, hence, $\frac {m}{2x}$ due
to $\zeta(0)=-\frac12$ (\cite[Eq.\@ (2.4.3)]{titchmarsh}):
\begin{equation*}
1 - m x^{-1}\sum_{0<k\leq x}(1 - x^{-1}k)^{m-1} =
   \frac{m}{2x} + \frac{-1}{2\pi i}\int_{-\frac12-i\infty}^{-\frac12 +i\infty}
  \frac{m! x^{z-1}\zeta(z)}{z(z+1)\dots(z+m-1)}\dz
\end{equation*}
Rearranging, we obtain
\begin{equation*}
  \sum_{0<k\leq x}(1 - x^{-1}k)^{m-1} - \frac{x}{m} + \frac12 = 
\frac{+1}{2\pi i}\int_{-\frac12-i\infty}^{-\frac12 +i\infty}
  \frac{(m-1)! x^{z}\zeta(z)}{z(z+1)\dots(z+m-1)}\dz\,.
\end{equation*}
The left-hand side, if taken for $x=2^l$, $l\geq1$, is \emph{exactly} the
quantity from Equation \eqref{eq:aml}.  As $|2^{z}| = 1/\sqrt{2}$, on the line
of integration, we can move
the summation over all those powers $x=2^l$ inside the integrand, and we
obtain the exact representation:%
\footnote{Mind that this $a_m$ is not exactly the $a_n$ from Equation (18) of
\cite[\S6.3]{knuth}.}
\begin{equation}
   \label{eq:ammellin}
a_m = \frac{1}{2\pi i}\int_{-\frac12-i\infty}^{-\frac12 +i\infty}
  \frac{(m-1)! \zeta(z)}{z(z+1)\dots(z+m-1)}\frac{\dz}{2^{-z} - 1}\,.
\end{equation}
We can now shift back to $\Re z=\frac12$, picking up the (opposite of the)
residues located at $z=0$ (where there is a double pole) and at $z=
\chi_k = 2\pi i k/\log (2)$.  And then we shift even further to the right, to
$\Re z=2$, with a contribution from the pole at $z=1$.  Once located on the
line $\Re z= 2$, the integral vanishes because $\zeta(z)$ is bounded for $\Re
z\geq 2$. We thus obtain the following
exact formula.

First
\begin{equation}
a_m = \frac{H_{m-1}}{2\log 2} + \frac{\zeta'(0)}{\log(2)} - \frac14 + \frac{2}m 
+ (\log 2)^{-1}\sum_{\substack{k\in\ZZ\\k\neq 0}}
\frac{(m-1)!\zeta(\chi_k)}{\chi_k(\chi_k+1)\dots(\chi_k+m-1)}\,,
\end{equation}
then, using $\zeta'(0) = -\frac12\log(2\pi)$ (\cite[Eq.\@ (2.4.5)]{titchmarsh}):
\begin{equation}
   \label{eq:amexact}
a_m = \frac{H_{m-1}}{2\log 2} - \frac{\log\pi}{2\log 2} - \frac34 + \frac2{m}
+ (\log 2)^{-1}\sum_{\substack{k\in\ZZ\\k\neq 0}}
\frac{\Gamma(m)\Gamma(\chi_k)\zeta(\chi_k)}{\Gamma(m+\chi_k)}.
\end{equation}
Using Theorem \ref{thm:gammaratio} as in the proof of Theorem
\ref{thm:fullasymp}, we would obtain via the very similar steps (details are
left to the reader):
\begin{prop}
  The quantity $\delta_m = a_m - \frac{H_{m-1}}{2\log 2} +
  \frac{\log\pi}{2\log 2} + \frac34 - \frac2{m}$ admits an asymptotic
  development to all orders in inverse powers of $m$, decorated by
  $1$-periodic functions of $\log_2(m)$:
  \begin{equation*}
    \delta_m \sim \frac1{\log 2}\sum_{j=0}^\infty \frac{\theta_j(\log_2(m))}{m^j}\,,
  \end{equation*}
with
\begin{equation*}
  \theta_j(t) = \sum_{\substack{k\in\ZZ\\k\neq0}}
  P_j(-\chi_k)\Gamma(-\chi_k)\zeta(-\chi_k)\e^{2\pi i k t}\,.
\end{equation*}
Here, $(P_j)$ is the sequence of polynomials already considered in Theorem
\ref{thm:fullasymp}, i.e.\@, $P_0=1$, $P_j(0)=0$ for $j>0$ and
$P_j(t+1)-P_j(t) = - t P_{j-1}(t+1)$.
\end{prop}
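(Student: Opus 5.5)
Starting point: the exact formula \eqref{eq:amexact}. By definition $\delta_m$ is precisely the residue series:
\begin{equation*}
\delta_m=(\log 2)^{-1}\sum_{k\neq0}\rho_k(m),\qquad \rho_k(m)=\frac{\Gamma(m)}{\Gamma(m+\chi_k)}\Gamma(\chi_k)\zeta(\chi_k).
\end{equation*}
The proof follows the same scheme as the proof of Theorem~\ref{thm:fullasymp}. Here $z=m$ (not $m+2$), $\alpha=0$ and $\beta=\chi_k$. Since $P_j(t)=Q_j(0,t)$ (Remark~\ref{rem:Pja} with $a=0$), the polynomials are the same $P_j$ as before.

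\emph{Large $|\chi_k|$.} First discard the $k$ with $|\chi_k|^2>m$. I will use the product form $\rho_k(m)=(m-1)!\,\zeta(\chi_k)/\bigl(\chi_k(\chi_k+1)\cdots(\chi_k+m-1)\bigr)$. For these $k$ only the bound on $\zeta$ changes compared with the earlier lemma. On $\Re z=0$ the functional equation gives $|\zeta(\chi_k)|=O(|k|^{1/2}\log|k|)$. Any polynomial bound suffices, since for a fixed $K$ and $m\ge 2K+3$ one lower-bounds $|\chi_k+j|$ by $|\chi_k|$ for $j\le 2K+2$ and by $j$ otherwise, as in that lemma. This gives $|\rho_k(m)|=O(|k|^{1/2}\log|k|\,|\chi_k|^{-2K-3})$, and summing over $|\chi_k|>\sqrt m$ gives $o(m^{-K})$.

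\emph{Small $|\chi_k|$.} For $|\chi_k|^2\le m$, apply Theorem~\ref{thm:gammaratio} with $c_1=1$ and $\eta$ arbitrary, since $z=m$ is real positive:
\begin{equation*}
\frac{\Gamma(m)m^{\chi_k}}{\Gamma(m+\chi_k)}=\sum_{j<J}\frac{P_j(\chi_k)}{m^j}+O\Bigl(\frac{|\chi_k|^{2J}}{m^J}\Bigr).
\end{equation*}
Multiply by $m^{-\chi_k}\Gamma(\chi_k)\zeta(\chi_k)$, noting that $m^{-\chi_k}=\e^{-2\pi i k\log_2 m}$ has modulus one.

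\emph{Decay of the weights.} The main point to check is that $\Gamma(\chi_k)\zeta(\chi_k)$ decays exponentially in $|k|$. Here $|\Gamma(it)|^2=\pi/(t\sinh\pi t)$, and $\zeta$ grows only polynomially on $\Re z=0$. Hence $|\Gamma(\chi_k)\zeta(\chi_k)|=O(c^{|k|})$ for some $c<1$. With this:
\begin{itemize}
\item the error terms $\sum_k |k|^{2J}c^{|k|}/m^J$ are $O(m^{-J})$;
\item completing each truncated sum $\sum_{|\chi_k|^2\le m}P_j(\chi_k)\Gamma(\chi_k)\zeta(\chi_k)m^{-\chi_k}$ to the full sum over $k\ne0$ costs only $o(\e^{-\eta\sqrt m})$.
\end{itemize}

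\emph{Conclusion.} Replacing $k$ by $-k$ turns $m^{-\chi_k}$ into $\e^{2\pi i k\log_2 m}$ and gives exactly $\theta_j(\log_2 m)$. Hence
\begin{equation*}
\delta_m=(\log2)^{-1}\sum_{j<J}\theta_j(\log_2 m)\,m^{-j}+O(m^{-J}).
\end{equation*}
The same exponential decay makes each Fourier series for $\theta_j$ absolutely convergent, so each $\theta_j$ is a smooth (indeed analytic in a strip) $1$-periodic function.
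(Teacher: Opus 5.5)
Your proposal is correct and follows the route the paper indicates. You start from \eqref{eq:amexact}, apply Theorem~\ref{thm:gammaratio} with $z=m$, $\alpha=0$, $\beta=\chi_k$ for $|\chi_k|^2\le m$, discard the remaining $k$ via the product form, and use the exponential decay of $\Gamma(\chi_k)\zeta(\chi_k)$, exactly as in the proof of Theorem~\ref{thm:fullasymp}. The paper leaves these details to the reader, and you supply them correctly, including the one real adaptation: the $O(|k|^{1/2}\log|k|)$ bound for $\zeta$ on $\Re z=0$ in the tail estimate.
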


The symbol $\sim$ means that keeping only from $j=0$ to $j=J-1$ represents
$\delta_m$ with an error which is $O(m^{-J})$.  The periodic function
$\theta_0(t)$ is already to be found on page 727 of \cite{knuth}.  But it is
derived there from first finding the approximation which we have denoted $b_m$
in 
section \ref{sec:knuth}, and it is then $b_m$ which is represented by an
integral in the complex plane (see \emph{op.\@ cit.\@}, p.\@ 510).

\section{Miscellaneous concluding remarks}

Let us point out that there is an alternative formula for $f_m$, $m>0$,
which is more convenient numerically:
\begin{prop}\label{prop:fm}
  Let $f_m$ for real $m>0$ be defined by Equation~\eqref{eq:fm},i.e.\@
  \begin{equation*}
  f_m = \sum_{l=0}^\infty \bigl(1 - \frac{m2^{-l}}{\e^{m2^{-l}}-1}\bigr).
  \end{equation*}
  There holds:
  \begin{equation*}
    f_m  = \sum_{l=1}^\infty \frac{l 2^{-l}m}{\e^{2^{-l}m}+1}\,.
  \end{equation*}
\end{prop}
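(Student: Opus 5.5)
The plan is to telescope, using the elementary identity
\begin{equation*}
  \frac{1}{\e^x+1} = \frac{1}{\e^x-1} - \frac{2}{\e^{2x}-1}\qquad(x>0),
\end{equation*}
which follows from $\e^{2x}-1=(\e^x-1)(\e^x+1)$. Put $h(x)=x/(\e^x-1)$, so that $f_m=\sum_{l\geq0}(1-h(m2^{-l}))$. Multiplying the identity by $x$ gives $\frac{x}{\e^x+1}=h(x)-h(2x)$. With $x_l=m2^{-l}$ we have $2x_l=x_{l-1}$, so the general term of the proposed series becomes
\begin{equation*}
  \frac{l2^{-l}m}{\e^{2^{-l}m}+1} = l\bigl(h(x_l)-h(x_{l-1})\bigr).
\end{equation*}

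Next, I will apply Abel summation to the partial sums up to $L$. Reindexing the second part of the sum gives
\begin{equation*}
  \sum_{l=1}^{L} l\bigl(h(x_l)-h(x_{l-1})\bigr) = L\,h(x_L) - \sum_{l=0}^{L-1} h(x_l)
  = \sum_{l=0}^{L-1}\bigl(1-h(x_l)\bigr) - L\bigl(1-h(x_L)\bigr).
\end{equation*}
I checked this bookkeeping directly: the coefficient of $h(x_l)$ is $l-(l+1)=-1$ for $1\leq l\leq L-1$, it is $-1$ for $l=0$, and it is $L$ for $l=L$.

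Finally, I let $L\to\infty$. The first sum tends to $f_m$, since that series has positive terms and converges, as already noted in Proposition~\ref{prop:emfm}. For the boundary term, I will use that $0\leq 1-h(x)\leq x/2$ for $x>0$. This holds because $h(x)=x\e^{-x/2}/(2\sinh(x/2))$, or because $h$ is convex with $h(0)=1$ and $h'(0)=-\frac12$. It gives $0\leq L(1-h(x_L))\leq Lm2^{-L-1}\to0$. The same argument shows that the series $\sum_l l2^{-l}m/(\e^{2^{-l}m}+1)$ converges, since its terms are positive and $O(lm2^{-l})$; so its value is the limit of the partial sums, which is $f_m$. No step is a real obstacle. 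The only points needing care are the index bookkeeping in the Abel summation and the elementary bound $1-h(x)\leq x/2$ that controls the boundary term.
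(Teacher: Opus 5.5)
Your proof is correct. The identity $x/(\e^x+1)=h(x)-h(2x)$, the summation-by-parts bookkeeping, and the bound $0\leq 1-h(x)\leq x/2$ all check out. For the bound, one clean derivation is $h(x)=\frac x2\coth\frac x2-\frac x2$ together with $y\coth y\geq 1$.

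The paper reaches the same result through a differently organized telescoping. It first proves Lemma~\ref{lem:viete}, $\sum_{k\geq1}2^{-k}z/(\exp(2^{-k}z)+1)=1-z/(\e^z-1)$, by taking the logarithmic derivative of $\e^z-1=(\e^{z/2}+1)\cdots(\e^{z/2^l}+1)(\e^{z/2^l}-1)$. This is your two-term identity iterated and summed. It then replaces each term $1-h(m2^{-l})$ of $f_m$ by that series. This gives a double series of positive terms evaluated at $2^{-l-k}m$, which it regroups along $q=l+k$; each $q\geq1$ arises from exactly $q$ pairs $(l,k)$.

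In the paper's route the limiting process is hidden in a rearrangement justified by positivity (or absolute convergence). So no tail estimate is needed, and one obtains along the way a standalone identity valid for all $z\in\CC\setminus2\pi i\ZZ$.

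Your route dispenses with the infinite product and the double series. Instead it pays with the explicit boundary term $L(1-h(x_L))$, which your linear bound on $1-h$ near $0$ disposes of. It is the more self-contained of the two. It also extends to complex $m\notin2\pi i\ZZ$, as the paper remarks its own proof does, since there you only need $1-h(x)=O(|x|)$ as $x\to0$.
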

For the proof we will use the Lemma~\ref{lem:viete} below, which is undoubtedly very
well-know, but which we did not locate in standard references
\cite{abrasteg, erdelyiI, erdelyIII}.  It also relates to the relation
\begin{equation}
  \label{eq:eulerber}
  E_{n-1}(x)=2n^{-1}\bigl(B_n(x)- 2^nB_n(2^{-1}x)\bigr)
\end{equation}
between Euler and Bernoulli polynomials (\cite[23.1.27]{abrasteg},
\cite[1.14(7)]{erdelyiI}).
\begin{lem}\label{lem:viete}
  For $z\in\CC\setminus2\pi i\ZZ$, 
\begin{equation}
  \label{eq:id}
  \sum_{k=1}^\infty \frac{2^{-k}z}{\exp(2^{-k}z) + 1} = 1 - \frac{z}{\e^z - 1}\,.
\end{equation}
\end{lem}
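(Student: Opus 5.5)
The plan is to use a telescoping argument built on the half-angle identity. For $w\in\CC\setminus\pi i\ZZ$ we have
\begin{equation*}
  \frac{1}{\e^{w}-1}-\frac{1}{\e^{2w}-1}=\frac{\e^{w}+1-1}{\e^{2w}-1}=\frac{\e^w}{(\e^w-1)(\e^w+1)}\,,
\end{equation*}
which is the Bernoulli/Euler doubling relation underlying \eqref{eq:eulerber}. The more convenient form is
\begin{equation*}
  \frac{1}{\e^{w}+1} = \frac{1}{\e^{w}-1}-\frac{2}{\e^{2w}-1}\,,
\end{equation*}
checked by putting everything over $\e^{2w}-1=(\e^w-1)(\e^w+1)$: the right side gives $(\e^w+1-2)/(\e^{2w}-1)=1/(\e^w+1)$. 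Multiplying by $w$ and writing $F(w)=w/(\e^w-1)$ (with $F(0)=1$), this becomes
\begin{equation*}
  \frac{w}{\e^w+1}=F(w)-F(2w)\,.
\end{equation*}

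Next substitute $w=2^{-k}z$ and sum for $k=1,\dots,N$. The sum telescopes to $F(2^{-N}z)-F(z)$, so
\begin{equation*}
  \sum_{k=1}^N \frac{2^{-k}z}{\exp(2^{-k}z)+1}=F(2^{-N}z)-\frac{z}{\e^z-1}\,.
\end{equation*}
The hypothesis $z\notin2\pi i\ZZ$ ensures that $F(z)$ is finite. For $k\geq1$, the point $2^{-k}z$ avoids $\pi i(2\ZZ+1)$ (otherwise $z\in 2^{k}\pi i(2\ZZ+1)\subset 2\pi i\ZZ$), so every term on the left is defined. It also avoids $2\pi i\ZZ\setminus\{0\}$ for the same reason, so each intermediate $F(2^{-k}z)$ is finite or removable at $0$, and every step of the telescoping is legitimate. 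Finally, $F$ is analytic near $0$ with $F(0)=1$, so $F(2^{-N}z)\to1$ as $N\to\infty$, which yields \eqref{eq:id}. Convergence of the series is a byproduct: the $k$th term is $O(2^{-k}|z|)$.

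There is no real obstacle here. The only point needing care is bookkeeping at the exceptional points, namely that no partial expression has a pole when $z\notin2\pi i\ZZ$, together with the value at $z=0$, where both sides equal $0$ under the convention $F(0)=1$.
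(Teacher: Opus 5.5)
Your proof is correct and is essentially the paper's argument in additive form. Your doubling identity $\frac{1}{\e^w+1}=\frac{1}{\e^w-1}-\frac{2}{\e^{2w}-1}$ is exactly the logarithmic derivative of $\e^{2w}-1=(\e^w-1)(\e^w+1)$; the paper iterates this factorization to $\e^z-1=(\e^{z/2}+1)\cdots(\e^{z/2^l}+1)(\e^{z/2^l}-1)$, whose logarithmic derivative is your telescoped partial sum, and then lets $l\to\infty$ just as you use $F(2^{-N}z)\to1$ (your remark about $z=0$ is harmless but unnecessary, since $0\in2\pi i\ZZ$ is excluded by hypothesis).
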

Observe in passing that if we multiply both sides by $\e^z-1$ we obtain
an identity of entire functions.
\begin{proof}
For any integer $l\geq1$ we consider the identity
\begin{equation*}
  \e^z - 1 = (e^{z/2}+1)(\e^{z/4}+1)\cdots(\e^{z/2^l}+1)(\e^{z/2^l}-1),
\end{equation*}
and compute its logarithmic derivative. This gives, for $z$ not among the poles:
\begin{equation*}
  \frac{\e^z}{\e^z-1} = \sum_{k=1}^l \frac{\e^{z/2^k}2^{-k}}{\e^{z/2^k}+1} +
   \frac{\e^{z/2^l}}{2^l(\e^{z/2^l}-1)}. 
\end{equation*}
The limit for $l\to\infty$ gives:
\begin{equation*}
  1 + (\e^z-1)^{-1} = \sum_{k=1}^\infty \bigl(2^{-k} - \frac{2^{-k}}{\e^{z/2^k}+1}\bigr) + z^{-1},
\end{equation*}
which, after canceling out the $1$ and multiplying by $z$ gives Equation
\eqref{eq:id}. Another method starts from the identity $\sinh w = w\cosh(\frac w2)\cosh(\frac
w4)\cosh(\frac w8)\cdots$ and computes the logarithmic derivative at $w=\frac z2$.
\end{proof}
\begin{proof}[Proof of Proposition~\ref{prop:fm}]
We can now compute:
\begin{align*}
  f_m &= \sum_{l=0}^\infty \Bigl(1 - \frac{m/2^l}{\exp(m/2^l)-1}\Bigr)\\
      &= \sum_{l=0}^\infty  \sum_{k=1}^\infty \frac{2^{-l-k}m}{\exp(2^{-l-k}m)+1}\\
      &= \sum_{q=1}^\infty (\sum_{k=1}^q 1) \frac{2^{-q}m}{\exp(2^{-q}m)+1}\,.\qedhere
\end{align*}
\end{proof}
The statement and proof of Proposition~\ref{prop:fm} actually hold for any
complex $m$ which is not in $2\pi i\ZZ$.

In the next proposition we compute (the primitive of) the exponential
generating function for the $(e_m)_{m\geq0}$ sequence.
  \begin{prop}
    There holds (for $t\in \CC$):
    \begin{equation}\label{eq:gf}
      \sum_{m=0} e_m\frac{t^{m+1}}{(m+1)!} 
= (\e^t-1)\sum_{l=0}^\infty \Bigl(1 - \frac{2^{-l}t}{\e^{2^{-l}t}-1}\Bigr)
= (\e^t-1)\sum_{l=1}^\infty\frac{l2^{-l}t}{\e^{2^{-l}t}+1}\,.
    \end{equation}
  \end{prop}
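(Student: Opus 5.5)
The plan is to work from the Bernoulli-number formula \eqref{eq:ember2}, $e_m=-\sum_{k=1}^m\binom{m+1}{k}B_k\frac{2^k}{2^k-1}$, rather than from the recurrence. That formula holds for every $m\geq0$; for $m=0$ the sum is empty, giving $e_0=0$. Writing $\frac{2^k}{2^k-1}=\sum_{l\geq0}2^{-kl}$, I would multiply by $t^{m+1}/(m+1)!$ and sum over $m\geq0$. With $n=m+1$, the inner quantity is $\sum_{k=1}^{n-1}\binom nk B_k 2^{-kl}$, and this is a Bernoulli convolution. The key identity is
\begin{equation*}
\sum_{n\geq0}\frac{t^n}{n!}\sum_{k=0}^{n}\binom nk B_k x^k=\e^t\,\frac{xt}{\e^{xt}-1}.
\end{equation*}
Removing the $k=0$ term and the $k=n$ term gives, at each fixed $x=2^{-l}$,
\begin{equation*}
\sum_{n\geq1}\frac{t^n}{n!}\sum_{k=1}^{n-1}\binom nk B_kx^k=\e^t\frac{xt}{\e^{xt}-1}-\e^t-\Bigl(\frac{xt}{\e^{xt}-1}-1\Bigr)=(\e^t-1)\Bigl(\frac{xt}{\e^{xt}-1}-1\Bigr).
\end{equation*}
Taking the sign into account, each $l$ contributes $(\e^t-1)\bigl(1-\frac{2^{-l}t}{\e^{2^{-l}t}-1}\bigr)$. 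Summing over $l\geq0$ yields the first equality in \eqref{eq:gf}.

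The second equality follows directly from Lemma~\ref{lem:viete}, exactly as in the proof of Proposition~\ref{prop:fm}, with $m$ replaced by $t$. This is valid for $t\notin 2\pi i\ZZ\setminus\{0\}$, and as an identity of entire functions once the factor $\e^t-1$ is included.

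The main obstacle is justifying the interchange of the $l$-sum with the $n$-sum, and extending the result to all $t\in\CC$. First I would restrict to $|t|<\pi$, say. Since $|2^{-l}t|<2\pi$, the series $\sum_k B_k(2^{-l}t)^k/k!$ converges absolutely with a bound uniform in $l$. The terms with $k\geq1$ carry a factor $2^{-l}$, so $\sum_{l}\sum_n\sum_k\binom nk|B_k|2^{-kl}|t|^n/n!$ is dominated by $\e^{|t|}\sum_l 2^{-l}\sum_k|B_k|(|t|)^k/k!$, which is finite. Fubini therefore applies on this disc.

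To pass to all $t\in\CC$, I would use analytic continuation. The left side is entire because $e_m=O(\log m)$ by Theorem~\ref{thm:em}, or simply $e_m=O(m)$ from Proposition~\ref{prop:1}. On the right, $\sum_l\bigl(1-\frac{2^{-l}t}{\e^{2^{-l}t}-1}\bigr)$ is meromorphic with simple poles only at $2\pi i\ZZ\setminus\{0\}$, as noted in Proposition~\ref{prop:phiana}. Each such pole is cancelled by a zero of $\e^t-1$, because only finitely many $l$ contribute a pole at a given point and each pole is simple. The right side is therefore entire, and the identity extends from the disc to all of $\CC$.
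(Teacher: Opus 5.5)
Your proposal is correct and follows essentially the paper's route: you start from \eqref{eq:ember2} and expand $2^k/(2^k-1)=\sum_{l\geq0}2^{-kl}$; the binomial convolution with $\e^t$ produces the factor $\e^t-1$, the Bernoulli generating function produces $\frac{2^{-l}t}{\e^{2^{-l}t}-1}-1$, and Lemma~\ref{lem:viete} gives the second equality. The differences are minor. The paper sums over $m$ first to get $t^k(\e^t-1)$ and only then expands in $l$, whereas you fix $l$ and use the Cauchy product. You also spell out the Fubini bound on a disc and the analytic continuation to all $t\in\CC$, which the paper leaves implicit in its remark that all three expressions are entire.
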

  \begin{proof}
    Note that the quantity $E(t)$ defined by the left-hand side is an entire
    function, and indeed if we move the factor $\e^t-1$ to inside the sums we
    see that the two other expressions are, too.  Let us use as starting point Equation~\eqref{eq:ember2} (and recall $e_0=0$):
    \begin{align*}
      \sum_{m=1} e_m\frac{t^{m+1}}{(m+1)!}
&= -\sum_{m=1} \frac{t^{m+1}}{(m+1)!}\sum_{k=1}^m \frac{(m+1)!}{k!(m+1-k)!}B_k\frac{2^k}{2^k-1}
\\
&= -\sum_{k=1}^\infty \frac{B_k 2^k}{k!(2^k-1)}\sum_{m=k}^\infty \frac{t^{m+1}}{(m+1-k)!}
\\
&= -\sum_{k=1}^\infty \frac{B_k 2^k t^k}{k!(2^k-1)}(\e^t-1)
\\
&=-(\e^t-1)\sum_{k=1}^\infty B_k\frac{t^k}{k!}\sum_{l=0}^\infty 2^{-lp}
\\
&=-(\e^t-1)\sum_{l=0}^\infty \Bigl(\frac{2^{-l}t}{\e^{2^{-l}t}-1} - 1\Bigr).
    \end{align*}
    This gives the first equality from Equation~\eqref{eq:gf}.  The second one was
    already stated, in another variable, as Proposition~\ref{prop:fm}.
  \end{proof}
  \begin{rema}
    Hence, if we let $E(t)$ denote the left-hand side of
    Equation~\eqref{eq:gf}, $E(m)=(\e^m -1)f_m = \e^m f_m + O(\log m)$, $f_m =
    \e^{-m}E(m) + O(\log(m) \e^{-m})$, $e_m = \e^{-m}E_m + O(m^{-1})$.  This is
    very reminiscent of the estimate from \cite[Prop.\@ 1]{burnolosc} relating
    moments of some measures on the unit interval with their exponential
    generating function.  More generally, the link between the asymptotic of
    some sequences and of their (exponential or ordinary) generating functions
    is an old theme, already very present in \textsc{Knuth}'s Treatise
    \cite{knuth}, and which has been addressed by
    \textsc{Flajolet} and \textsc{Odlyzko} in a well-known paper
    \cite{flajoodlyz1990}.  See \cite[Part B: Complex
    asymptotics]{flajosedgebook} for an extensive introduction to this whole theme.
  \end{rema}
  \begin{rema}
    A form of the functional equation verified by $E(t)$ (which has neither
    constant term nor a linear term, as $e_0=0$) is
\begin{equation}\label{eq:funceq}
  E(2t)  = (\e^t+1){E(t)} + \e^{2t} -1-2t.
\end{equation}
Looking at~\eqref{eq:funceq} as an identity of formal power series, it implies
that $E$ has no constant term, but not that $E'(0)=0$.  Indeed, the
associated homogeneous equation has a one-dimensional
space of solutions generated by $\e^t-1$.  This corresponds to the fact that
the solutions of the recurrence relation Equation
\eqref{eq:emrec} are the sequences $(e_m + \lambda)_{m\geq0}$ for $\lambda$ an
arbitrary constant.
  \end{rema}

The $\phi$ function from Theorem~\ref{thm:em} fits into an old theme going
back to \textsc{Hardy} \cite{hardy1907} (see also \cite{keatread2000}).  \textsc{Balazard} et
al.\@ \cite{balamendsebb2005} have studied under some general conditions
series of the type $f_\alpha(x)=\sum_{n\in\ZZ}\alpha(x^{\theta^n})$ where
$\alpha$ is continuous on $\Iff01$, vanishes at $0$ and $1$ and has a power
series expansion $\alpha(x)=\sum_{k=1}^\infty a_kx^k$ (convergent for $0\leq
x<1$). They obtain the Fourier series of $f_\alpha$ as a function of
$\log_\theta\log x^{-1}$ in terms of values at $(\log\theta)^{-1}2\pi i \ZZ
\setminus\{0\}$ of the Gamma function and a Dirichlet series associated to
$\alpha$.  This is reminiscent of the computations done in section
\ref{sec:fourier} (we did not need to use the Poisson summation formula).

They consider in particular the case $\alpha(x)=x\prod_{n=0}^\infty (1 -
x^{2^n})$, which is related to the Thue-Morse sequence (see also about this
\cite{allouchecohen1985}).  It is of note that the function $\phi(t)$ ($t=\log_2(m)$)
from Theorem~\ref{thm:em} and Equation~\eqref{eq:phi} is related to
the logarithmic derivative of $m\mapsto \e^{m}\alpha(\e^{-m})$.  Indeed, we compute
straightforwardly:
\begin{equation*}
    m\ddm \log \prod_{n=0}^\infty (1 - \e^{-2^nm})
=
m\sum_{n=0}^\infty \frac{2^n\e^{-2^nm}}{1 - e^{-2^nm}}
=
\sum_{n=0}^\infty \frac{2^n m}{\e^{2^nm}-1}
=
\frac{m}{\e^m-1} - g_m\,.
\end{equation*}
Here $g_m$ is the quantity defined in Equation~\eqref{eq:gm}.

This computation motivates the consideration of the following infinite product:
\begin{equation}
  \label{eq:Psi}
  \Psi(z) = 2^{\frac{\Log_2(z)(\Log_2(z)+1)}2}
         \prod_{l=0}^\infty\frac{1 - \e^{-2^{-l}z}}{2^{-l}z} 
            \prod_{k=1}^\infty(1 - \e^{-2^kz}).
\end{equation}
The first infinite product is an entire function.  The second one is analytic in the half-plane $\Re z>0$.  So $\Psi$ is analytic there.  It verifies the functional equation:
\begin{equation*}
  \Psi(2z) = 2^{\frac{(\Log_2(z)+1)(\Log_2(z)+2)}2-\frac{\Log_2(z)(\Log_2(z)+1)}2}
              \Psi(z)\frac{(1-\e^{-2z})/(2z)}{1 - \e^{-2z}}=\Psi(z)
\end{equation*}
A straightforward computation gives the logarithmic derivative with respect to
the variable $\Log z$:
\begin{equation*}
  z\ddz \log \Psi(z) = \Log_2(z) + \frac12 
     + \sum_{l=0}^\infty \Bigl(\frac{2^{-l}z}{\e^{2^{-l}z}-1} - 1\Bigr)
     + \sum_{k=1}^\infty\frac{2^k z}{\e^{2^kz} - 1}.
\end{equation*}
Comparison with Equation~\eqref{eq:phi} shows that the above is exactly
$-\phi(\Log_2 z)$.  And indeed, the image of the right half-plane under
$z\mapsto\Log_2z$ is the horizontal strip where we have defined $\phi$ as an
analytic function.

Whether this relation between $\Psi$ and $\phi$ is indicative of some closer
connection between the sequence $(e_m)$ and the Thue-Morse sequence
\cite{allouchecohen1985,balamendsebb2005} is currently unknown to the author.

\noindent\textbf{Acknowledgements.} Thanks to Jean-Paul \textsc{Allouche} for
drawing the author attention to \href{https://oeis.org/A372422}{A372422} and
to the work
by \textsc{Knuth} and \textsc{Prodinger}.

\providecommand\bibcommenthead{}
\def\blocation#1{\unskip} \footnotesize

%



\bigskip
\noindent\hfill\vtop{%
\hsize 5cm
\parskip 0pt plus 1pt minus 1pt
\footnotesize
\flushleft
\obeylines\strut%
  Université de Lille,
  Faculté des Sciences et technologies,
  Département de mathématiques,
  Cité Scientifique,
  F-59655 Villeneuve d'Ascq cedex,
  France
\medskip%
\strut jean-francois.burnol@univ-lille.fr
}%

\end{document}